\def\thefigure{\thesection.\@arabic\c@figure}
\def\fps@figure{h, t}
        \theoremstyle{plain} 
        \newtheorem{theorem}             {Theorem}  [section]
        \newtheorem{lemma}      [theorem]{Lemma}
        \newtheorem{corollary}  [theorem]{Corollary}
        \newtheorem{proposition}[theorem]{Proposition}
		\newtheorem{thmalph}			{Theorem}
		\newtheorem{coralph}	[thmalph]{Corollary}
        \theoremstyle{definition}
        \newtheorem{example}    [theorem]{Example}
        \theoremstyle{remark}
        \newtheorem{remark}     [theorem]{Remark}
\begin{document}

\title[Explicit Hilbert spaces for the unitary dual of rank one orthogonal groups]{Explicit Hilbert spaces for the unitary dual of rank one orthogonal groups and applications}

\author{Christian Arends, Frederik Bang-Jensen and Jan Frahm}
\address{Department of Mathematics, Aarhus University, Ny Munkegade 118, 8000 Aarhus C, Denmark}
\email{arends@math.au.dk, frederik\_bang-jensen@hotmail.com, frahm@math.au.dk}

\begin{abstract}
We realize all irreducible unitary representations of the group $\mathrm{SO}_0(n+1,1)$ on explicit Hilbert spaces of vector-valued $L^2$-functions on $\mathbb{R}^n\setminus\{0\}$. The key ingredient in our construction is an explicit expression for the standard Knapp--Stein intertwining operators between arbitrary principal series representations in terms of the Euclidean Fourier transform on a maximal unipotent subgroup isomorphic to $\mathbb{R}^n$.\\
As an application, we describe the space of Whittaker vectors on all irreducible Casselman--Wallach representations. Moreover, the new realizations of the irreducible unitary representations immediately reveal their decomposition into irreducible representations of a parabolic subgroup, thus providing a simple proof of a recent result of Liu--Oshima--Yu.
\end{abstract}

\maketitle

\section*{Introduction}

The unitary dual $\widehat{G}$ of a Lie group $G$ is the set of equivalence classes of its irreducible unitary representations. While the unitary dual of general Lie groups, in particular of most semisimple groups, is still not completely known, there are some subclasses of groups where we have a full description. The topic of this paper is the unitary dual of the indefinite orthogonal group $G=\mathrm{SO}_0(n+1,1)$, $n>1$, which has been known since the 1960s by the work of Hirai~\cite{Hirai62} (see also \cite{BaldoniSilvaBarbasch83,BransonOlafsson,Thieleker73} for alternative approaches).

Although a unitary representation of $G$ is a representation on a (typically infinite-dimensional) Hilbert space, many of the irreducible unitary representations are constructed by completing a representation of the Lie algebra $\mathfrak{g}$ of $G$ with respect to an algebraically defined inner product. This makes the Hilbert space on which the group acts a rather abstract object. However, when studying analytic problems in representation theory such as branching problems where restricted representations are decomposed into direct integrals of Hilbert spaces, it is of utmost importance to have a more intrinsic understanding of the Hilbert spaces on which the representations are realized (see e.g. \cite{MoellersOshima15}).

The goal of this paper is to provide such realizations for all irreducible unitary representations of $G$. Every such representation can be realized as a subrepresentation of a principal series representation induced from a finite-dimensional representation of a parabolic subgroup $P$ of $G$. Those principal series representations which are induced from unitary representations of $P$ are by construction unitary with respect to a natural $L^2$-inner product, so the corresponding Hilbert space is a space of (possibly vector-valued) $L^2$-functions. But there exist principal series representations induced from non-unitary representations of $P$ or even subrepresentations thereof which carry a more involved invariant inner product given in terms of the standard Knapp--Stein intertwining operators. What makes the corresponding Hilbert spaces more delicate is the fact that the inner product is given in terms of convolution with a possibly singular operator-valued integral kernel. A priori it is not even clear whether the resulting Hermitian forms are positive (semi)definite.

The key idea is to turn the complicated convolution type Hermitian forms into more accessible ones by applying a Fourier transform. More precisely, the principal series representations can be realized in the so-called non-compact picture on a space of (possibly vector-valued) functions on a maximal unipotent subgroup $\overline{N}\simeq\mathbb{R}^n$. In these coordinates, the Knapp--Stein operators are convolution operators, so the Euclidean Fourier transform turns them into operator-valued multiplication operators. The main result of this paper is an explicit expression for these multiplication operators in terms of their eigenspaces and eigenvalues. The unitarity question then translates to the simple question of positivity of these eigenvalues, and the corresponding Hilbert spaces are $L^2$-spaces of functions with values in the corresponding eigenspaces.

The idea of utilizing the Fourier transform (or the related Laplace transform) for such problems is certainly not new, see e.g. \cite{Ding99,DingGross93,DvorskySahi99,DvorskySahi03,HilgertKobayashiMoellers14,KobayashiOersted03,MoellersOshima15,MoellersSchwarz17,Sahi92,VergneRossi76}. However, so far it has only been applied to principal series representations induced from one-dimensional representations. Since most of the representations in the unitary dual of $G$ are contained in principal series representations induced from higher-dimensional representations of $P$, we have to deal with vector-valued functions and operator-valued integral kernels, making the analysis much more subtle.

We now give a more detailed description of our results.

\subsection*{Principal series representations and intertwining operators}

Let $G=\mathrm{SO}_0(n+1,1)$, the identity component of the group of real matrices of size $n+2$ preserving the standard quadratic form of signature $(n+1,1)$. The stabilizer $P$ in $G$ of the isotropic line through $(0,\ldots,0,1,1)\in\mathbb{R}^{n+2}$ is a parabolic subgroup and has a decomposition into $P=MAN$ with $M\simeq\mathrm{SO}(n)$, $A\simeq\mathbb{R}_+$ and $N\simeq\mathbb{R}^n$. For every irreducible representation $(\sigma,V_\sigma)$ of $M$ and for every (not necessarily unitary) character $e^\lambda$ parameterized by $\lambda\in\mathbb{C}$ we form the induced representation (smooth normalized parabolic induction)
$$ \pi_{\sigma,\lambda}=\mathrm{Ind}_P^G(\sigma\otimes e^\lambda\otimes1) $$
of $G$. The quotient $G/P$ contains an open dense subset isomorphic to a unipotent subgroup $\overline{N}\simeq\mathbb{R}^n$ opposite to $N$, so by restriction we can realize $\pi_{\sigma,\lambda}$ on a space $I_{\sigma,\lambda}\subseteq C^\infty(\mathbb{R}^n,V_\sigma)$ of $V_\sigma$-valued smooth functions on $\mathbb{R}^n$, the so-called \emph{non-compact picture}. An alternative way of viewing these representations is via conformal geometry: the group $G$ acts on $\mathbb{R}^n$ via conformal rational linear transformations and $(\pi_{\sigma,\lambda},I_{\sigma,\lambda})$ is the corresponding family of multiplier representations on $V_\sigma$-valued functions on $\mathbb{R}^n$.

For $\lambda\in i\mathbb{R}$, the character $e^\lambda$ is unitary and $\pi_{\sigma,\lambda}$ extends to a unitary representation of $G$ on $L^2(\mathbb{R}^n,V_\sigma)$ which is always irreducible, forming the so-called \emph{unitary principal series}. While this Hilbert space is rather explicit, the Hilbert space completion of other representations $\pi_{\sigma,\lambda}$ and their subrepresentations/quotients is more involved. First, for every self-dual representation $\sigma$ of $M$, there is an explicit constant $a_\sigma>0$ such that $\pi_{\sigma,\lambda}$ (with $\lambda\in\mathbb{R}$) is irreducible and unitarizable if and only if $\lambda\in(-a_\sigma,a_\sigma)$. These representations form the \emph{complementary series} and their invariant inner product can be written as
\begin{equation}
	I_{\sigma,\lambda}\times I_{\sigma,\lambda}\to\mathbb{C}, \quad (f_1,f_2)\mapsto\int_{\mathbb{R}^n}\int_{\mathbb{R}^n}\langle f_1(x),K_{\sigma,\lambda}(x-y)f_2(y)\rangle_\sigma\,dx\,dy,\label{eq:IntroInvInnerProd}
\end{equation}
where
$$ K_{\sigma,\lambda}(x) = \mathrm{const}\times|x|^{2\lambda-n}\sigma\left(I_n-2\frac{xx^t}{\|x\|^2}\right)\in\mathrm{End}(V_\sigma) \qquad (x\in\mathbb{R}^n\setminus\{0\}). $$
Here we extend the self-dual irreducible representation $\sigma$ of $M\simeq\mathrm{SO}(n)$ to $\mathrm{O}(n)$ and note that $I_n-2xx^t/\|x\|^2\in\mathrm{O}(n)$ for every $x\in\mathbb{R}^n\setminus\{0\}$. (Such an extension is unique up to twisting by the determinant character, so the above inner products only differ by a sign.) Moreover, the constant in $K_{\sigma,\lambda}(x)$ can be chosen such that $K_{\sigma,\lambda}$ forms a family of distributions in $\mathcal{S}'(\mathbb{R}^n)\otimes\mathrm{End}(V_\sigma)$ which depends holomorphically on $\lambda\in\mathbb{C}$ and is nowhere vanishing. We remark that it is a priori not clear that $K_{\sigma,\lambda}$ is positive definite for $\lambda\in(-a_\sigma,a_\sigma)$, and there is no obvious explicit description of the Hilbert space completion of $I_{\sigma,\lambda}\subseteq C^\infty(\mathbb{R}^n)\otimes V_\sigma$ with respect to the corresponding inner product (unless $\lambda=0$ where $K_{\sigma,\lambda}(x)=\mathrm{const}\times\delta(x)\cdot\mathrm{id}_{V_\sigma}$).

The kernels $K_{\sigma,\lambda}\in\mathcal{S}'(\mathbb{R}^n)\otimes\mathrm{End}(V_\sigma)$ arise from a family of intertwining operators $A_{\sigma,\lambda}:I_{\sigma,\lambda}\to I_{\sigma,-\lambda}$ (again $\sigma$ is assumed to be self-dual) in the sense that
$$ A_{\sigma,\lambda}f(x) = \int_{\mathbb{R}^n} K_{\sigma,\lambda}(x-y)f(y)\,dy \qquad (f\in I_{\sigma,\lambda},x\in\mathbb{R}^n), $$
the integral meant in the distribution sense. All other irreducible unitary representations of $G$ can be obtained as (one of at most two direct summands of) the kernel or image of $A_{\sigma,\lambda}$ and the invariant inner product is given by a similar expression as above, possibly regularized in $\lambda$.

\subsection*{The Fourier transform of intertwining operators}

The main result of this paper is a more accessible formula for the kernels $K_{\sigma,\lambda}\in\mathcal{S}'(\mathbb{R}^n)\otimes\mathrm{End}(V_\sigma)$ and hence for the intertwining operators $A_{\sigma,\lambda}$ in terms of the Euclidean Fourier transform. To state this formula, we first observe that the stabilizer $M_\xi$ of $\xi\in\mathbb{R}^n\setminus\{0\}$ in $M$ is isomorphic to $\mathrm{SO}(n-1)$. By the classical branching rules for the pair of groups $(\mathrm{SO}(n),\mathrm{SO}(n-1))$, the restriction of $\sigma$ to $M_\xi$ decomposes into a multiplicity-free finite direct sum of irreducible representations $\tau$ of $M_\xi$. We write
$$ V_\sigma = \bigoplus_{\tau\in\widehat{M}_\xi} W_\tau(\xi) \qquad (\xi\in\mathbb{R}^n\setminus\{0\}) $$
for the corresponding decomposition of the representation space $V_\sigma$, indicating its dependence of $\xi$. Note that since any two stabilizers $M_\xi$ are conjugate, we can use the same parametrization of the representations $\tau$ for all $\xi$.

\begin{thmalph}[see Theorem~\ref{thm:IntertwiningOperatorsFpicture}]\label{thm:Intro}
Let $\sigma\in\widehat{M}$ be self-dual. The Euclidean Fourier transform $\widehat{K}_{\sigma,\lambda}$ of $K_{\sigma,\lambda}\in\mathcal{S}'(\mathbb{R}^n)\otimes\mathrm{End}(V_\sigma)$ is smooth on $\mathbb{R}^n\setminus\{0\}$ and given by
\begin{equation}
	\widehat{K}_{\sigma,\lambda}(\xi) = \|\xi\|^{-2\lambda}\sum_\tau a_{\sigma,\lambda}(\tau)\cdot\mathrm{pr}_{W_\tau(\xi)} \in \mathrm{End}(V_\sigma) \qquad (\xi\in\mathbb{R}^n\setminus\{0\}),\label{eq:IntroFTKernel}
\end{equation}
with scalars $a_{\sigma,\lambda}(\tau)$ given explicitly in \eqref{eq:ExplicitIntertwiningScalars} in terms of the highest weights of $\sigma$ and $\tau$.
\end{thmalph}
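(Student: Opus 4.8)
The plan is to first pin down the \emph{shape} of $\widehat K_{\sigma,\lambda}$ from equivariance and homogeneity, and then to compute the eigenvalues $a_{\sigma,\lambda}(\tau)$ by an explicit Funk--Hecke type integral. For $g\in\mathrm{O}(n)$ one has $g\bigl(I_n-2xx^t/\|x\|^2\bigr)g^{-1}=I_n-2(gx)(gx)^t/\|gx\|^2$ and $\|gx\|=\|x\|$, so, with $\sigma$ extended to $\mathrm{O}(n)$, the kernel transforms by $K_{\sigma,\lambda}(gx)=\sigma(g)\,K_{\sigma,\lambda}(x)\,\sigma(g)^{-1}$ and is homogeneous of degree $2\lambda-n$. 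The Euclidean Fourier transform intertwines the rotation action of $\mathrm{O}(n)$, turns homogeneity of degree $\rho$ into homogeneity of degree $-\rho-n$, and sends a homogeneous tempered distribution that is smooth off the origin to one that is smooth off the origin (this also drops out of the explicit formula below). Hence $\widehat K_{\sigma,\lambda}$ is a smooth $\mathrm{End}(V_\sigma)$-valued function on $\mathbb{R}^n\setminus\{0\}$, homogeneous of degree $-2\lambda$, with $\widehat K_{\sigma,\lambda}(g\xi)=\sigma(g)\,\widehat K_{\sigma,\lambda}(\xi)\,\sigma(g)^{-1}$. Fixing a unit vector $\xi_0$, the value $\widehat K_{\sigma,\lambda}(\xi_0)$ lies in $\mathrm{End}_{M_{\xi_0}}(V_\sigma)$, which is commutative since $V_\sigma|_{M_{\xi_0}}$ is multiplicity free; by Schur's lemma it is therefore of the form $\sum_\tau a_{\sigma,\lambda}(\tau)\,\mathrm{pr}_{W_\tau(\xi_0)}$. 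Writing an arbitrary $\xi\neq 0$ as $\|\xi\|\,g\xi_0$, the equivariance relation together with $W_\tau(g\xi_0)=\sigma(g)W_\tau(\xi_0)$ forces the scalars to be constant in $\xi$ and yields \eqref{eq:IntroFTKernel} pointwise on $\mathbb{R}^n\setminus\{0\}$; that it holds as an identity of tempered distributions follows because the difference of the two sides would be supported at $0$ and homogeneous of degree $-2\lambda$, hence a finite sum of derivatives of $\delta$, which a comparison of homogeneity degrees excludes for all but a discrete set of $\lambda$, the remaining values being covered by holomorphy of $K_{\sigma,\lambda}$ in $\lambda$.

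It then remains to determine $a_{\sigma,\lambda}(\tau)=\langle\widehat K_{\sigma,\lambda}(\xi_0)\,v,v\rangle_\sigma$ for a suitably chosen unit vector $v\in W_\tau(\xi_0)$. Passing to polar coordinates $x=r\omega$ and performing the radial integral by the classical Fourier transform of $r^{2\lambda-n}$ (which contributes a ratio of Gamma functions together with a phase factor in $\mathrm{sgn}\langle\omega,\xi_0\rangle$ that cancels because $s_\omega=s_{-\omega}$) reduces the computation to a spherical integral of Funk--Hecke type,
\[
	a_{\sigma,\lambda}(\tau)=c(\lambda)\int_{S^{n-1}}|\langle\omega,\xi_0\rangle|^{-2\lambda}\,\langle\sigma(s_\omega)\,v,v\rangle_\sigma\,d\omega .
\]
Using $s_\omega=s_{\xi_0}\,r_\omega$, with $r_\omega$ the rotation by $2\angle(\xi_0,\omega)$ in the plane $\mathrm{span}(\xi_0,\omega)$, and parametrizing $\omega=\cos\phi\,\xi_0+\sin\phi\,u$ with $u\in S^{n-2}$, one integrates out $u$ by invoking the iterated branching $\mathrm{SO}(n)\supset\mathrm{SO}(n-1)\supset\cdots$ of $V_\sigma$ (multiplicity free at each stage) and the addition theorems for Gegenbauer/Jacobi functions; with $v$ chosen appropriately this collapses the $u$-integration onto a single Jacobi function of $\cos\phi$. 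The remaining one-variable integral $\int_0^\pi|\cos\phi|^{-2\lambda}(\sin\phi)^{n-2}\,(\text{Jacobi function})(\cos\phi)\,d\phi$ is a classical beta-type integral whose value is the finite product of Gamma quotients recorded in \eqref{eq:ExplicitIntertwiningScalars}, indexed by the rows of the highest weights of $\sigma$ and $\tau$.

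The substantive difficulty is the matrix-valued nature of the Funk--Hecke integral and the bookkeeping of the branching: choosing the test vector $v$ so that the $u$-integration, and then the successive lower-dimensional spherical integrations obtained by iterating, genuinely diagonalize, and assembling the resulting Gamma factors into the product in \eqref{eq:ExplicitIntertwiningScalars}. Two consistency checks guide the computation: for $\sigma$ trivial the answer must reduce to the Riesz identity $\mathcal F(r^{2\lambda-n})=c(\lambda)\,r^{-2\lambda}$, and, since $A_{\sigma,-\lambda}\circ A_{\sigma,\lambda}$ is a scalar multiple of the identity (the Knapp--Stein relation), the product $a_{\sigma,\lambda}(\tau)\,a_{\sigma,-\lambda}(\tau)$ must be independent of $\tau$ and equal to that scalar.
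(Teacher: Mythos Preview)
Your derivation of the \emph{shape} of $\widehat K_{\sigma,\lambda}$ (homogeneity from $A$-equivariance, the projection decomposition from $M_\xi$-equivariance and Schur's lemma, and the distributional patching at the origin via homogeneity and holomorphy) is correct and essentially identical to what the paper does in Section~3.2.

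Where your proposal diverges from the paper is in the determination of the eigenvalues $a_{\sigma,\lambda}(\tau)$. The paper does \emph{not} attempt a direct Funk--Hecke integral; instead it analyzes the Fourier-transformed $\mathfrak{n}$-action (Proposition~\ref{proposition:cnfql9m3l9} and Proposition~\ref{prop:NactionInFpicture}) to obtain a recursion \eqref{eq:RecursionIntertwinerFpicture} relating $a_{\sigma,\lambda}(\tau)$ and $a_{\sigma,\lambda}(\tau\pm e_i)$, and then solves this recursion. The authors say explicitly in the introduction that computing $\widehat K_{\sigma,\lambda}$ directly ``seems to be difficult in general.''

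The gap in your proposal is precisely the step you yourself flag as the substantive difficulty. The assertion that, with $v$ ``chosen appropriately,'' the $u$-integration over $S^{n-2}$ of the matrix coefficient $\langle\sigma(r_{2\phi,u})v,v\rangle_\sigma$ collapses to a single Jacobi function of $\cos\phi$ is not justified, and it is not clear that any off-the-shelf addition theorem accomplishes this for general $\sigma$. The matrix coefficients of $\mathrm{SO}(n)$ in a Gelfand--Tsetlin basis with respect to planar rotations are in general sums of products of $d$-functions indexed by the full chain of interlacing patterns, not a single Jacobi function; reducing to the specific product of Pochhammer symbols in \eqref{eq:ExplicitIntertwiningScalars} would require carrying out and controlling a multi-step iterated integration that you have only named, not performed. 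The two consistency checks you list (the scalar Riesz case and the Knapp--Stein functional equation) constrain the answer but do not determine it. In short, the shape argument is fine, but the eigenvalue computation as written is an outline rather than a proof; the paper's recursion via the $\mathfrak{n}$-action is what actually closes this gap.
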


In the special case $V_\sigma=\bigwedge^p\mathbb{C}^n$, this formula was previously obtained in \cite{FischmannOrsted2021} (see also Remark~\ref{rem:SpecialCasePforms}).

Applying the Plancherel Formula for the Euclidean Fourier transform as well as the fact that the Fourier transform turns a convolution into a multiplication, \eqref{eq:IntroFTKernel} allows us to rewrite the invariant inner product \eqref{eq:IntroInvInnerProd} as
\begin{equation}
	(f_1,f_2)\mapsto\sum_\tau a_{\sigma,\lambda}(\tau)\int_{\mathbb{R}^n\setminus\{0\}}\langle\mathrm{pr}_{W_\tau(\xi)}\widehat{f}_1(\xi),\mathrm{pr}_{W_\tau(\xi)}\widehat{f}_2(\xi)\rangle_\sigma\|\xi\|^{-2\lambda}\,d\xi.\label{eq:IntroInnerProductFpicture}
\end{equation}
The scalars $a_{\sigma,\lambda}(\tau)$ turn out to be positive for all $\tau$ if and only if $\lambda\in(-a_\sigma,a_\sigma)$, so we obtain a new proof for the existence of the complementary series together with an explicit description of the Hilbert space completion as the Fourier transform of a weighted vector-valued $L^2$-space:

\begin{coralph}[see Theorem~\ref{thm:InnerProductsInFpicture}~\eqref{thm:InnerProductsInFpicture2}]
Let $\sigma\in\widehat{M}$ be self-dual. The Hilbert space completion of the complementary series representation $I_{\sigma,\lambda}$, $\lambda\in(-a_\sigma,a_\sigma)$, is
\begin{equation}
	\left\{f\in\mathcal{S}'(\mathbb{R}^n)\otimes V_\sigma:\sum_\tau a_{\sigma,\lambda}(\tau)\int_{\mathbb{R}^n\setminus\{0\}}\|\mathrm{pr}_{W_\tau(\xi)}\widehat{f}(\xi)\|_\sigma^2\|\xi\|^{-2\lambda}\,d\xi<\infty\right\}.
\end{equation}
\end{coralph}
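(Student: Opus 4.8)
The plan is to deduce the assertion from Theorem~\ref{thm:Intro}, from the reformulation \eqref{eq:IntroInnerProductFpicture} of the invariant Hermitian form, and from the fact (also established en route) that $a_{\sigma,\lambda}(\tau)>0$ for all $\tau$ precisely when $\lambda\in(-a_\sigma,a_\sigma)$, by soft functional analysis. Write $\mathcal{H}$ for the subspace of $\mathcal{S}'(\mathbb{R}^n)\otimes V_\sigma$ appearing in the statement. Since $\lambda\in(-a_\sigma,a_\sigma)$, every $a_{\sigma,\lambda}(\tau)$ is positive, so
\begin{equation*}
	B(\xi):=\|\xi\|^{-\lambda}\sum_\tau a_{\sigma,\lambda}(\tau)^{1/2}\,\mathrm{pr}_{W_\tau(\xi)}\qquad(\xi\in\mathbb{R}^n\setminus\{0\})
\end{equation*}
is a smooth field of positive self-adjoint operators on $V_\sigma$, namely the positive square root of $\widehat{K}_{\sigma,\lambda}(\xi)$ in \eqref{eq:IntroFTKernel}, with smooth pointwise inverse $B(\xi)^{-1}=\|\xi\|^{\lambda}\sum_\tau a_{\sigma,\lambda}(\tau)^{-1/2}\,\mathrm{pr}_{W_\tau(\xi)}$. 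As $\sigma$ is unitary the subspaces $W_\tau(\xi)$ are mutually orthogonal in $V_\sigma$, so \eqref{eq:IntroInnerProductFpicture} says precisely that the invariant Hermitian form on $I_{\sigma,\lambda}$ equals $(f_1,f_2)\mapsto(B\widehat{f}_1,B\widehat{f}_2)_{L^2(\mathbb{R}^n,V_\sigma)}$ and that the finiteness condition defining $\mathcal{H}$ is $B\widehat f\in L^2(\mathbb{R}^n,V_\sigma)$; in particular the form is positive definite.

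I would then show that $f\mapsto B\widehat f$ is an isometric isomorphism of $\mathcal{H}$ onto $L^2(\mathbb{R}^n,V_\sigma)$, so that $\mathcal{H}$ is a Hilbert space. Isometry is built in. For surjectivity, given $h\in L^2(\mathbb{R}^n,V_\sigma)$ put $\phi:=B^{-1}h$; since $\|B(\xi)^{-1}\|$ is $O(\|\xi\|^{\lambda})$ as $\xi\to0$ and $\|\xi\|^{\pm2\lambda}$ is locally integrable on $\mathbb{R}^n$ (this is where one uses the bound $a_\sigma\le n/2$ on the complementary range, i.e.\ $|\lambda|<n/2$), a Cauchy--Schwarz estimate shows $\phi\in L^1_{\mathrm{loc}}(\mathbb{R}^n)\otimes V_\sigma$, while its at most polynomial growth at infinity makes it tempered; hence $f:=\mathcal{F}^{-1}\phi$ lies in $\mathcal{S}'(\mathbb{R}^n)\otimes V_\sigma$ with $\widehat f=\phi$ and $B\widehat f=h$, so $f\in\mathcal{H}$. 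The same estimate, applied with the least eigenvalue $c\|\xi\|^{-\lambda}$ of $B(\xi)$ in place of $\|B(\xi)^{-1}\|$, shows that for every $f\in\mathcal{H}$ the distribution $\widehat f$ is represented by an $L^1_{\mathrm{loc}}(\mathbb{R}^n)$-function; since $B$ is invertible off the origin, $B\widehat f=0$ forces $\widehat f=0$ a.e.\ on $\mathbb{R}^n\setminus\{0\}$ and hence $\widehat f=0$, which gives injectivity (and confirms that $\mathcal{H}$ carries no spurious null vectors supported at the origin). Thus $\mathcal{H}$ is complete.

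Finally I would verify that $I_{\sigma,\lambda}\hookrightarrow\mathcal{H}$ is an isometric embedding with dense image, which identifies $\mathcal{H}$ with the Hilbert space completion of $I_{\sigma,\lambda}$. The inclusion and the agreement of the two forms are exactly \eqref{eq:IntroInnerProductFpicture} (applied with $f_1=f_2=f$ to see $f\in\mathcal{H}$ for $f\in I_{\sigma,\lambda}$). For density, observe that $C_c^\infty(\mathbb{R}^n)\otimes V_\sigma\subseteq I_{\sigma,\lambda}$, because a compactly supported smooth function on the open Bruhat cell $\overline{N}\simeq\mathbb{R}^n$ of $G/P$ extends by $0$ to a smooth section. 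Suppose $h\in L^2(\mathbb{R}^n,V_\sigma)$ is orthogonal to $B\widehat f$ for all $f\in C_c^\infty(\mathbb{R}^n)\otimes V_\sigma$; then, $B(\xi)$ being self-adjoint on $V_\sigma$, we get $\int_{\mathbb{R}^n}\langle(Bh)(\xi),\widehat f(\xi)\rangle_\sigma\,d\xi=0$ for all such $f$, and since $Bh$ is a tempered distribution (by the estimates above), Parseval's formula turns this into $\int_{\mathbb{R}^n}\langle\mathcal{F}^{-1}(Bh)(x),f(x)\rangle_\sigma\,dx=0$ for all $f\in C_c^\infty(\mathbb{R}^n)\otimes V_\sigma$, whence $\mathcal{F}^{-1}(Bh)=0$, so $Bh=0$ and $h=0$. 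Therefore $B\,\widehat{I_{\sigma,\lambda}}$ is dense in $L^2(\mathbb{R}^n,V_\sigma)$, i.e.\ $I_{\sigma,\lambda}$ is dense in $\mathcal{H}$. The one genuinely delicate point in this scheme is the middle step — turning the set $\mathcal{H}$ into an honest Hilbert space by controlling $\widehat f$ near $0$ and near $\infty$ through the integrability of $\|\xi\|^{\pm2\lambda}$, which is exactly where the bound $a_\sigma\le n/2$ enters; granting Theorem~\ref{thm:Intro} and the positivity of the $a_{\sigma,\lambda}(\tau)$, everything else is routine.
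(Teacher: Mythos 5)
Your proposal follows the same route as the paper: the substance is Theorem~\ref{thm:InnerProductsInFpicture}~\eqref{thm:InnerProductsInFpicture2}, which identifies the invariant Hermitian form on the complementary series as the pullback under $\mathcal{F}$ of the stated weighted $L^2$-form, and the paper treats the identification of the completion as immediate from there. Your square root $B(\xi)$ of $\widehat{K}_{\sigma,\lambda}(\xi)$, the local square-integrability of $\|\xi\|^{\pm\lambda}$ coming from $|\lambda|<\rho=n/2$, and the density argument via $C_c^\infty(\mathbb{R}^n)\otimes V_\sigma\subseteq I_{\sigma,\lambda}$ supply exactly the functional-analytic details the paper omits, and they are correct.

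The one soft spot is the injectivity step. A tempered distribution whose restriction to $\mathbb{R}^n\setminus\{0\}$ vanishes need not vanish: for $f$ constant one has $\widehat f=(2\pi)^{n/2}\delta_0\otimes v$, which satisfies the defining finiteness condition vacuously, so your estimate does not by itself show that $\widehat f$ \emph{equals} an $L^1_{\mathrm{loc}}(\mathbb{R}^n)$-function --- it only controls the function part on $\mathbb{R}^n\setminus\{0\}$ and says nothing about a possible singular summand supported at the origin. This is really an imprecision in the formulation of the corollary rather than in your argument; it is repaired by reading the defining condition as requiring $\widehat f|_{\mathbb{R}^n\setminus\{0\}}\in L^1_{\mathrm{loc}}$ \emph{and} $\widehat f$ to carry no part supported at $\{0\}$ (equivalently, by passing to the quotient modulo such distributions, which are exactly the null vectors of the form). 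With that reading in place, the rest of your scheme goes through.
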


\subsection*{Irreducible subrepresentations and quotients}

By Theorem~\ref{thm:Intro}, we obtain the following explicit expression for the Fourier transform of the intertwining operator $A_{\sigma,\lambda}:I_{\sigma,\lambda}\to I_{\sigma,-\lambda}$:
\begin{equation}
	\widehat{A_{\sigma,\lambda}f}(\xi) = \|\xi\|^{-2\lambda}\sum_\tau a_{\sigma,\lambda}(\tau)\cdot\mathrm{pr}_{W_\tau(\xi)}\widehat{f}(\xi) \qquad (f\in I_{\sigma,\lambda},\xi\in\mathbb{R}^n\setminus\{0\}).\label{eq:IntroFourierTransformOfIntertwiner}
\end{equation}
All other irreducible unitary representations of $G$ arise as (one of at most two direct summands of) the kernel of $A_{\sigma,\lambda}$ for self-dual $\sigma\in\widehat{M}$ and $\lambda\in\mathbb{R}$ (see Proposition~\ref{prop:SubrepsAndQuotientsInFpicture} for the precise statement). The kernel of $A_{\sigma,\lambda}$ is
$$ \ker A_{\sigma,\lambda} = \bigoplus_{\tau:a_{\sigma,\lambda}(\tau)=0}I_{\sigma,\lambda}(\tau), \qquad \mbox{with }I_{\sigma,\lambda}(\tau) = \{f\in I_{\sigma,\lambda}:\widehat{f}(\xi)\in W_\tau(\xi)\mbox{ for all }\xi\in\mathbb{R}^n\setminus\{0\}\}. $$
The invariant inner product on the unitarizable subrepresentations is given by a similar formula as in \eqref{eq:IntroInnerProductFpicture} and the corresponding Hilbert space completion is again the Fourier transform of a weighted vector-valued $L^2$-space:

\begin{coralph}[see Theorem~\ref{thm:InnerProductsInFpicture}~\eqref{thm:InnerProductsInFpicture3} and \eqref{thm:InnerProductsInFpicture4}]
	If the subrepresentation $\ker A_{\sigma,\lambda}=\bigoplus_{\tau:a_{\sigma,\lambda}(\tau)=0}I_{\sigma,\lambda}(\tau)$ of $I_{\sigma,\lambda}$ is unitarizable, then the invariant inner product is given by
	$$ (f_1,f_2)\mapsto\sum_{\tau:a_{\sigma,\lambda}(\tau)=0}\frac{1}{a_{\sigma,-\lambda}(\tau)}\int_{\mathbb{R}^n\setminus\{0\}}\langle\mathrm{pr}_{W_\tau(\xi)}\widehat{f}_1(\xi),\mathrm{pr}_{W_\tau(\xi)}\widehat{f}_2(\xi)\rangle_\sigma\|\xi\|^{-2\lambda}\,d\xi $$
	and the corresponding Hilbert space completion equals
\begin{equation}
	\left\{f\in\mathcal{S}'(\mathbb{R}^n)\otimes V_\sigma:\begin{array}{cc}\widehat{f}(\xi)\in\bigoplus_{\tau:a_{\sigma,\lambda}(\tau)=0}W_\tau(\xi)\mbox{ for all }\xi\in\mathbb{R}^n\setminus\{0\}\mbox{ and}\\\sum_{\tau:a_{\sigma,\lambda}(\tau)=0}a_{\sigma,-\lambda}(\tau)^{-1}\int_{\mathbb{R}^n\setminus\{0\}}\|\mathrm{pr}_{W_\tau(\xi)}\widehat{f}(\xi)\|_\sigma^2\|\xi\|^{-2\lambda}\,d\xi<\infty\end{array}\right\}.
\end{equation}
\end{coralph}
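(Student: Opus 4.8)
The plan is to obtain the invariant inner product on $\ker A_{\sigma,\lambda}$ by pulling back, along the \emph{other} Knapp--Stein operator $A_{\sigma,-\lambda}\colon I_{\sigma,-\lambda}\to I_{\sigma,\lambda}$, the standard invariant Hermitian form on $I_{\sigma,-\lambda}$, and then to read off the completion exactly as in the complementary series case (Theorem~\ref{thm:InnerProductsInFpicture}\,\eqref{thm:InnerProductsInFpicture2}). I work throughout on the Fourier side. There, by \eqref{eq:IntroFourierTransformOfIntertwiner}, the operators $A_{\sigma,\pm\lambda}$ act on $\widehat{I_{\sigma,\pm\lambda}(\tau)}$ as the scalar multiplication operators $\widehat f\mapsto\|\xi\|^{\mp2\lambda}a_{\sigma,\pm\lambda}(\tau)\,\widehat f$, while the standard (invariant, Hermitian) form on $I_{\sigma,-\lambda}$ is, by the $-\lambda$-analogue of \eqref{eq:IntroInnerProductFpicture},
\[
  (g_1,g_2)\longmapsto\sum_\tau a_{\sigma,-\lambda}(\tau)\int_{\mathbb{R}^n\setminus\{0\}}\langle\mathrm{pr}_{W_\tau(\xi)}\widehat g_1(\xi),\mathrm{pr}_{W_\tau(\xi)}\widehat g_2(\xi)\rangle_\sigma\,\|\xi\|^{2\lambda}\,d\xi .
\]

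First I would show that $A_{\sigma,-\lambda}$ restricts to a $G$-isomorphism of $\bigoplus_{\tau\,:\,a_{\sigma,\lambda}(\tau)=0}I_{\sigma,-\lambda}(\tau)$ onto $\ker A_{\sigma,\lambda}$. For each $\tau$ with $a_{\sigma,\lambda}(\tau)=0$ one has $a_{\sigma,-\lambda}(\tau)\ne0$ (and finite) — this is read off from the explicit formula \eqref{eq:ExplicitIntertwiningScalars} at the relevant values of $\lambda$, and is exactly the implicit hypothesis in the displayed inner product. By \eqref{eq:IntroFourierTransformOfIntertwiner}, $A_{\sigma,-\lambda}$ is then on $I_{\sigma,-\lambda}(\tau)$ the multiplication operator $\widehat g\mapsto\|\xi\|^{2\lambda}a_{\sigma,-\lambda}(\tau)\widehat g$; its image lies in $\ker A_{\sigma,\lambda}$ because $A_{\sigma,\lambda}A_{\sigma,-\lambda}$ then acts by $a_{\sigma,\lambda}(\tau)a_{\sigma,-\lambda}(\tau)=0$, and it is inverted by $\widehat f\mapsto\|\xi\|^{-2\lambda}a_{\sigma,-\lambda}(\tau)^{-1}\widehat f$, which maps $\widehat{I_{\sigma,-\lambda}(\tau)}$ onto $\widehat{I_{\sigma,\lambda}(\tau)}$ since multiplication by $\|\xi\|^{2\lambda}$ interchanges the homogeneity/growth conditions cutting out the non-compact pictures of $I_{\sigma,\pm\lambda}$. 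Summing over these $\tau$ and using $\ker A_{\sigma,\lambda}=\bigoplus_{\tau:a_{\sigma,\lambda}(\tau)=0}I_{\sigma,\lambda}(\tau)$ (Proposition~\ref{prop:SubrepsAndQuotientsInFpicture}) gives the claim.

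Now let $B_\lambda$ be the pullback under this isomorphism of the restriction of the above form to $\bigoplus_{\tau:a_{\sigma,\lambda}(\tau)=0}I_{\sigma,-\lambda}(\tau)$. As the pullback of a $G$-invariant Hermitian form along a $G$-equivariant map, $B_\lambda$ is $G$-invariant and Hermitian on $\ker A_{\sigma,\lambda}$, and non-degenerate since the form we pulled back is (all the $a_{\sigma,-\lambda}(\tau)$ in play being nonzero). Writing $f_i=A_{\sigma,-\lambda}g_i$, so that $\mathrm{pr}_{W_\tau(\xi)}\widehat g_i(\xi)=\|\xi\|^{-2\lambda}a_{\sigma,-\lambda}(\tau)^{-1}\mathrm{pr}_{W_\tau(\xi)}\widehat f_i(\xi)$, and substituting into the displayed form, the two powers $\|\xi\|^{\mp2\lambda}$ combine and the factor $a_{\sigma,-\lambda}(\tau)$ cancels against $a_{\sigma,-\lambda}(\tau)^{-2}$, yielding precisely the formula $\sum_{\tau:a_{\sigma,\lambda}(\tau)=0}a_{\sigma,-\lambda}(\tau)^{-1}\int\langle\mathrm{pr}_{W_\tau(\xi)}\widehat f_1,\mathrm{pr}_{W_\tau(\xi)}\widehat f_2\rangle_\sigma\|\xi\|^{-2\lambda}\,d\xi$ of the statement. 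Under the hypothesis that $\ker A_{\sigma,\lambda}$ is unitarizable, $B_\lambda$ — normalized as above, which in particular fixes the relative scale of the at most two summands $I_{\sigma,\lambda}(\tau)$ — is its invariant inner product: by the classification of $\ker A_{\sigma,\lambda}$ in Proposition~\ref{prop:SubrepsAndQuotientsInFpicture} together with Schur's lemma on the summands, the space of invariant Hermitian forms is spanned by $B_\lambda$ on each summand, and unitarizability forces positivity, equivalently each $a_{\sigma,-\lambda}(\tau)>0$. For the completion, the right-hand side is, on the Fourier side, the squared norm of a weighted $L^2$-space of $\bigl(\bigoplus_{\tau:a_{\sigma,\lambda}(\tau)=0}W_\tau(\xi)\bigr)$-valued sections on $\mathbb{R}^n\setminus\{0\}$ with weight $\|\xi\|^{-2\lambda}$ and constants $a_{\sigma,-\lambda}(\tau)^{-1}$; this space is complete, the pointwise constraint $\widehat f(\xi)\in\bigoplus_\tau W_\tau(\xi)$ is closed, and $\widehat{\ker A_{\sigma,\lambda}}$ is dense in it by the same density argument as in the complementary series case (the Fourier transforms of functions in $I_{\sigma,\lambda}(\tau)$ containing all smooth $W_\tau(\xi)$-valued sections compactly supported away from $0$). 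Hence the completion is the displayed space of tempered distributions.

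I expect the main obstacle to be the pair of intertwined normalization points: verifying from the explicit Gamma-factor expression \eqref{eq:ExplicitIntertwiningScalars} that, at the reducibility values of $\lambda$, every $\tau$ with $a_{\sigma,\lambda}(\tau)=0$ has $a_{\sigma,-\lambda}(\tau)$ finite, nonzero, and (under unitarizability) positive; and pinning down that the displayed form — not merely some invariant Hermitian form positive on each irreducible summand — is the inner product, which is where the precise description of the module structure of $\ker A_{\sigma,\lambda}$ is used. The remaining completeness and density bookkeeping for the Hilbert space is routine once the complementary series case is established.
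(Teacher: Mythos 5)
Your proposal is correct and follows essentially the same route as the paper: the authors likewise realize $\ker A_{\sigma,\lambda}$ as the image of $\widehat{A}_{\sigma,-\lambda}$, define the invariant form by $(\widehat{A}_{\sigma,-\lambda}g_1,\widehat{A}_{\sigma,-\lambda}g_2)\mapsto\int\langle g_1(\xi),\widehat{A}_{\sigma,-\lambda}g_2(\xi)\rangle_\sigma\,d\xi$ (i.e.\ the pullback of the $A_{\sigma,-\lambda}$-twisted pairing that you describe), and obtain the factor $a_{\sigma,-\lambda}(\tau)^{-1}\|\xi\|^{-2\lambda}$ by exactly the substitution you carry out using Theorem~\ref{thm:IntertwiningOperatorsFpicture}. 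Your additional remarks on the nonvanishing of $a_{\sigma,-\lambda}(\tau)$, positivity under unitarizability, and the density argument for the completion match the checks the paper performs (or leaves implicit) via \eqref{eq:ExplicitIntertwiningScalars} and the complementary-series case.
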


We remark that our results can easily be translated to the disconnected groups $\mathrm{SO}(n+1,1)$ and $\mathrm{O}(n+1,1)$. However, we chose to work with the connected group $G=\mathrm{SO}_0(n+1,1)$ since here both the maximal compact subgroup $K$, the Levi factor $M$ and the stabilizers $M_\xi$ are connected and hence their irreducible representations are parametrized in terms of their highest weights.

\subsection*{Applications to branching laws and Whittaker vectors}

Applying the Euclidean Fourier transform to $I_{\sigma,\lambda}$ and its unitarizable subrepresentations yields new realizations for all irreducible unitary representations of $G$ on weighted vector-valued $L^2$-spaces. In these realizations, the action of the parabolic subgroup $\overline{P}$ opposite to $P$ is particularly simple. This allows us to immediately deduce the decomposition of the restriction of these unitary representations to $\overline{P}$ into irreducibles. In this way, we obtain a new and simple proof for the branching laws obtained by Liu--Oshima--Yu~\cite{LiuOshimaYu23} (see Corollary~\ref{cor:BranchingPbar}).

Moreover, restricting even further to the unipotent subgroup $\overline{N}$, we can use these new models to describe the space of Whittaker vectors on all Casselman--Wallach representations of $G$. In fact, by the theory of Jacquet integrals (see e.g. \cite[Theorem 15.4.1]{W92}), it is known that the space of Whittaker vectors on $I_{\sigma,\lambda}$ is naturally isomorphic to $V_\sigma'$. What is more difficult to describe is which Whittaker vectors factor through a quotient of $I_{\sigma,\lambda}$ resp. restrict to non-zero Whittaker vectors on a subrepresentation. Corollary~\ref{cor:WhittakerVectors} gives a precise answer to this question for all irreducible subrepresentations and quotients of $I_{\sigma,\lambda}$ and also identifies the structure of the space of Whittaker vectors as a representation of the stabilizer in $M$ of the respective character of $\overline{N}$. These results hold withouth the assumption on $\sigma$ to be self-dual, so they include indeed all irreducible admissible representations by the Casselman Embedding Theorem.

\subsection*{Idea of the proof}

Instead of trying to compute the Fourier transform $\widehat{K}_{\sigma,\lambda}(\xi)$ directly (which seems to be difficult in general), we make use of the intertwining properties of $A_{\sigma,\lambda}$. More precisely, using the fact that $A_{\sigma,\lambda}$ is intertwining for the action of $\overline{P}$ we first conclude that $\widehat{K}_{\sigma,\lambda}(\xi)$ has to be of the form in \eqref{eq:IntroFTKernel} for some scalars $a_{\sigma,\lambda}(\tau)$. Then we study in detail the Fourier transform of the Lie algebra action of $\mathfrak{n}$ to show a recurrence relation for the scalars $a_{\sigma,\lambda}(\tau)$, thus determining them up to a holomorphic and nowhere vanishing function in $\lambda$. This last step is inspired by the technique developed in \cite{BransonOlafsson}.

\subsection*{Acknowledgements}

The first named author was supported by a research grant from the Aarhus University Research Foundation (Grant No. AUFF-E-2022-9-34). The second and third named authors were supported by a research grant from the Villum Foundation (Grant No. 00025373). The third named author would like to thank Santosh Nadimpalli for inspiring discussions about Whittaker vectors on subrepresentations and quotients.

\section{The unitary dual of rank one orthogonal groups}

We recall from the literature the classification of all irreducible unitary representations of the group $G=\mathrm{SO}_0(n+1,1)$ in terms of the irreducible admissible representations and intertwining operators between them. We assume throughout the whole paper that $n>1$ since for $n=1$ the group $G$ is isomorphic to $\operatorname{PSL}(2,\mathbb{R})$ whose representation theory is well understood.

\subsection{Groups and decompositions}

Let $G=\mathrm{SO}_0 (n+1,1)$ denote the identity component of the group of all matrices in $\mathrm{GL} (n+2,\mathbb{R})$ that leave the quadratic form
\begin{align*}
    \mathbb{R}^{n+2}\to \mathbb{R}^{n+2},\quad x \mapsto x^{t}I_{n+1,1}x,\quad I_{n+1,1}=\mathrm{diag}(1,\ldots, 1,-1),
  \end{align*}
  invariant. We fix the Cartan involution $\theta (g)=(g^{t})^{-1}$ on $G$ and denote the corresponding involution on $\mathfrak{g}=\mathrm{Lie}(G)$ by the same letter. Then $\mathfrak{g}$ decomposes into the $+1$ and $-1$ eigenspaces $\mathfrak{k}$ and $\mathfrak{p}$ of $\theta $ on $\mathfrak{g}$:
  \begin{align*}
  \mathfrak{g}=\left\{
  \begin{pmatrix}
A & b \\
b^t & 0\\
\end{pmatrix}\mid A \in \mathfrak{so}(n+1),\ b \in \mathbb{R}^{n+1}\right\}=\left\{
  \begin{pmatrix}
A & 0 \\
0 & 0 \\
\end{pmatrix} \in \mathfrak{g} \right\} \oplus \left\{
  \begin{pmatrix}
0 & b \\
b^t  & 0 \\
\end{pmatrix}\in \mathfrak{g} \right\}=\mathfrak{k}\oplus \mathfrak{p}.
\end{align*}
We choose the maximal abelian subalgebra $\mathfrak{a} \coloneqq \mathbb{R} H_{0}$ of $\mathfrak{p}$, where $H_{0} \coloneqq E_{n+1,n+2}+E_{n+2,n+1}$ with $E_{i,j}$ denoting the $(n+2) \times (n+2)$-matrix whose $(i,j)$-entry is equal to $1$ and which is zero everywhere else. The root system for $(\mathfrak{g}, \mathfrak{a})$ consists of the roots $\pm \gamma $, with $\gamma \in \mathfrak{a}_{\mathbb{C} }^{*}$ such that $\gamma (H_0)=1$. We denote the corresponding root spaces by
\begin{align*}
\mathfrak{n}\coloneqq \mathfrak{g}_{\gamma },\quad \overline{\mathfrak{n}}\coloneqq \mathfrak{g}_{-\gamma }=\theta \mathfrak{n}
\end{align*}
and write $N \coloneqq \exp (\mathfrak{n}), \quad \overline{N}\coloneqq \exp (\overline{\mathfrak{n}})=\theta N$. Choosing $+\gamma$ as the positive root, the half sum of positive roots with multiplicities is given by $\rho =\frac{n}{2}\gamma $ and $\mathfrak{n}=\mathrm{span}\{N_1, \ldots , N_{n}\}$, $\overline{\mathfrak{n}}=\mathrm{span}\{\overline{N}_1, \ldots , \overline{N}_n\}$ with
\begin{align*}
N_j \coloneqq E_{j,n+1}-E_{j,n+2}-E_{n+1, j}-E_{n+2,j}, \qquad \overline{N}_j\coloneqq\theta(N_j)= E_{j,n+1}+E_{j,n+2}-E_{n+1, j}+E_{n+2,j}.
\end{align*}
For $x \in \mathbb{R}^{n}$ we set
\begin{align*}
n_x \coloneqq \exp \bigg(\sum_{j=1}^{n}x_j N_j\bigg), \quad \overline{n}_x \coloneqq \exp \bigg(\sum_{j=1}^{n}x_j \overline{N}_j\bigg).
\end{align*}
Moreover, let $K \coloneqq G^\theta\cong \mathrm{SO}(n+1)$ be the maximal compact subgroup of $G$ consisting of the fixed points of $\theta$ and put
\begin{align*}
A &\coloneqq \exp \mathfrak{a} = \left\{
  \begin{pmatrix}
I_n  & 0 & 0 \\
0 & \cosh (t) & \sinh(t) \\
0 & \sinh(t) & \cosh(t) \\
  \end{pmatrix}\mid t \in \mathbb{R} \right\}\\
M &\coloneqq Z_K (\mathfrak{a}) = \left\{
\begin{pmatrix}
A & 0 & 0 \\
0 & 1 & 0 \\
0 & 0 & 1 \\
\end{pmatrix}\mid A \in \mathrm{SO} (n) \right\}\cong \mathrm{SO}(n).
\end{align*}

Writing $P$ for the minimal parabolic subgroup $MAN$ we obtain the open dense Bruhat cell $\overline{N}P \subset G$. For generic $x,y\in\mathbb{R}^n$ the element $n_y^{-1}\overline{n}_x$ is again contained in $\overline{N}P$. In order to compute its decomposition as $\overline{n}_zme^{tH_0}n_w$, we use the representative $w_0 \coloneqq \mathrm{diag}(-1,\ldots ,-1,1)$ of the non-trivial Weyl group element in the slightly bigger group $\mathrm{O}(n+1,1)$.

\begin{lemma}\label{lemma:cnfnljqwca}
  For $x \in \mathbb{R}^n \setminus\{0\}$ we have $w_0 \overline{n}_x =\overline{n}_y me^{tH_0 }n_y $ with
  \begin{align*}
    y=-\frac{x}{\lVert x \rVert^{2}},\quad m=2 \frac{x x^t }{\lVert x \rVert^2}-I_n \in \mathrm{O}(n) \cong \mathrm{diag}(\mathrm{O}(n),1,1),\quad t=2 \log \lVert x \rVert.
  \end{align*}
\end{lemma}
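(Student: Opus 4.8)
The plan is to verify the identity $w_0\overline{n}_x = \overline{n}_y m e^{tH_0} n_y$ by a direct matrix computation, exploiting the fact that all the relevant matrices can be written down explicitly. First I would compute $\overline{n}_x = \exp(\sum_j x_j\overline{N}_j)$ in closed form. Since $\overline{N}_j = E_{j,n+1}+E_{j,n+2}-E_{n+1,j}+E_{n+2,j}$, the matrix $X := \sum_j x_j\overline{N}_j$ has a simple block structure, and one checks that $X^3 = \|x\|^2 X$ (this is the standard rank-one-plus-nilpotent phenomenon for $\mathfrak{so}(n+1,1)$), so the exponential series truncates: $\exp(X) = I + X + \tfrac{1}{2}X^2 + \text{(higher terms involving } \cosh\|x\|, \sinh\|x\|, \ldots)$. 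Actually, because $\overline{N}_j$ is nilpotent as an element of the opposite nilpotent radical and $\overline{\mathfrak n}$ is abelian, a cleaner route is to note that $\overline{n}_x$ acts on $\mathbb{R}^{n+2}$ by an explicit formula; I would just record the $(n+2)\times(n+2)$ matrix of $\overline{n}_x$ with entries polynomial in $x$ and quadratic in $\|x\|^2$.

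Next I would left-multiply by $w_0 = \mathrm{diag}(-1,\ldots,-1,1)$, which simply negates the first $n+1$ rows. Then on the right-hand side I would multiply out $\overline{n}_y m e^{tH_0} n_y$ with the proposed values $y = -x/\|x\|^2$, $m = 2xx^t/\|x\|^2 - I_n$, $t = 2\log\|x\|$, using the explicit forms of $A\simeq e^{tH_0}$ given in the text, the block-diagonal form of $M$, and the explicit forms of $n_y$, $\overline{n}_y$. The claim then reduces to checking equality of two explicitly given matrices, i.e. a finite list of polynomial identities in $x_1,\ldots,x_n$ (after clearing denominators $\|x\|^2$), which is routine; the substitution $e^{t} = \|x\|^2$ makes $\cosh t, \sinh t$ rational in $\|x\|^2$, and the entries $2xx^t/\|x\|^2 - I_n$ combine with the $y$-dependent blocks to collapse everything.

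An alternative and perhaps more conceptual approach, which I would mention, is to first establish the identity in the rank-one computation for $\mathrm{SL}(2,\mathbb{R})$-triples: the subalgebra generated by $N_j, \overline{N}_j, H_0$ for a fixed direction is essentially $\mathfrak{sl}(2)$, and the Weyl element $w_0$ implements the Bruhat decomposition $\begin{pmatrix}1&0\\s&1\end{pmatrix}$ in terms of $\begin{pmatrix}1&s^{-1}\\0&1\end{pmatrix}$ etc. Reducing to the line through $x$ (so that $M_x\simeq\mathrm{SO}(n-1)$ acts trivially and only the $\mathrm{SO}(2)$-rotation in the $(x,H_0)$-plane, recorded by $m$, survives) reduces the problem to the familiar $2\times 2$ identity $\begin{pmatrix}0&-1\\1&0\end{pmatrix}\begin{pmatrix}1&0\\c&1\end{pmatrix} = \begin{pmatrix}1&-c^{-1}\\0&1\end{pmatrix}\begin{pmatrix}-c^{-1}&0\\0&-c\end{pmatrix}\begin{pmatrix}1&-c^{-1}\\0&1\end{pmatrix}$ with $c=\|x\|$; matching this against the stated $y$, $m$, $t$ then only requires bookkeeping of how the $\mathbb{R}^n$-coordinate $x$ sits inside the big group.

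The main obstacle, such as it is, is purely organizational: keeping track of signs and of the precise embedding of the various subgroups into $\mathrm{GL}(n+2,\mathbb{R})$ — in particular that $w_0$ lies in $\mathrm{O}(n+1,1)$ but not in $G$, so the identity is an identity of matrices rather than of elements of $G$, and that the conjugation by $w_0$ flips $\mathfrak n\leftrightarrow\overline{\mathfrak n}$ correctly. There is no analytic difficulty: everything is a closed-form finite computation once the truncated exponential series for $\overline{n}_x$ and $n_y$ are in hand, and the verification that $m\in\mathrm{O}(n)$ is the elementary fact that $2uu^t - I_n$ is the reflection (hence orthogonal) when $\|u\|=1$, applied to $u = x/\|x\|$.
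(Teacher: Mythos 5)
Your proposal takes essentially the same route as the paper, whose entire proof consists of recording the explicit $(n+2)\times(n+2)$ matrices of $n_x$ and $\overline{n}_x$ and verifying the identity by direct multiplication. One small correction: for $X=\sum_j x_j\overline{N}_j$ one has $X^3=0$ (so $\overline{n}_x=I+X+\tfrac{1}{2}X^2$ exactly, with no hyperbolic functions appearing), not $X^3=\|x\|^2X$ --- that kind of relation holds for elements of $\mathfrak{p}$, not of the nilradical --- but this does not affect your argument since you immediately fall back on the explicit closed-form matrix anyway.
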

\begin{proof}
  Note that
  \begin{align*}
    n_x =\begin{pmatrix}
      I_n & x & -x \\
      -x^t  & 1-\frac{\lVert x \rVert^2 }{2} & \frac{\lVert x \rVert^2 }{2} \\
      -x^t  & -\frac{\lVert x \rVert^2 }{2} & 1+\frac{\lVert x \rVert^2 }{2} \\
    \end{pmatrix} \quad \mbox{and} \quad \overline{n}_x =
    \begin{pmatrix}
      I_n  & x & x \\
      -x^t  & 1-\frac{\lVert x \rVert^2 }{2} & -\frac{\lVert x \rVert^2 }{2} \\
      x^t  & \frac{\lVert x \rVert^2 }{2} & 1+\frac{\lVert x \rVert^2 }{2} \\
    \end{pmatrix}.
  \end{align*}
  The result now follows by straightforward calculations.
\end{proof}

\begin{lemma}\label{lemma:cnfn1kagf2}
  For $x,y \in \mathbb{R}^n $ with $\lVert x \rVert^2 y+x \neq 0$ we have $n_y^{-1} \overline{n}_x =\overline{n}_z me^{tH_0 }n_1 \in \overline{N}P$ with
  \begin{align*}
    z=\frac{\lVert x \rVert^2 (x+\lVert x \rVert^2 y)}{\lVert x+\lVert x \rVert^2 y \rVert^2 },\ m=\left( 2 \frac{(x+\lVert x \rVert^2 y)(x+\lVert x \rVert^2 y)^t }{\lVert x+ \lVert x \rVert^2 y \rVert^2 }-I_n  \right)\left( 2 \frac{x x^t }{\lVert x \rVert^2 }- I_n  \right),\ t=2\log \frac{\lVert x+ \lVert x \rVert^2 y \rVert}{\lVert x \rVert}
  \end{align*}
  and some $n_1 \in N$.
\end{lemma}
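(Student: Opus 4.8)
The plan is to deduce the statement from two applications of Lemma~\ref{lemma:cnfnljqwca}, using that the element $w_0$ interchanges $N$ and $\overline{N}$. Since $w_0=\mathrm{diag}(-1,\ldots,-1,1)$ satisfies $w_0^2=I$ and a direct inspection of the matrices $N_j,\overline{N}_j$ gives $w_0N_jw_0^{-1}=\overline{N}_j$, we have $w_0n_vw_0^{-1}=\overline{n}_v$ and $w_0\overline{n}_vw_0^{-1}=n_v$ for all $v\in\mathbb{R}^n$; in particular $n_y^{-1}=w_0\,\overline{n}_{-y}\,w_0$ and therefore
\[
 n_y^{-1}\overline{n}_x = w_0\,\overline{n}_{-y}\,\bigl(w_0\overline{n}_x\bigr).
\]
The hypothesis $\lVert x\rVert^2y+x\neq 0$ forces $x\neq 0$, so Lemma~\ref{lemma:cnfnljqwca} applies to $w_0\overline{n}_x$ and yields $w_0\overline{n}_x=\overline{n}_{x_1}\,m_1\,e^{t_1H_0}\,n_{x_1}$ with $x_1=-x/\lVert x\rVert^2$, $m_1=2xx^t/\lVert x\rVert^2-I_n$ and $t_1=2\log\lVert x\rVert$.

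Since $\overline{N}$ is abelian (the root system being $\{\pm\gamma\}$, so $\mathfrak{g}_{-2\gamma}=0$), the factors $\overline{n}_{-y}$ and $\overline{n}_{x_1}$ combine to $\overline{n}_{x_1-y}$, and by hypothesis $x_1-y=-(x+\lVert x\rVert^2y)/\lVert x\rVert^2\neq 0$, so a second application of Lemma~\ref{lemma:cnfnljqwca} gives $w_0\overline{n}_{x_1-y}=\overline{n}_{v_1}\,m_2\,e^{t_2H_0}\,n_{v_1}$ with $v_1=-(x_1-y)/\lVert x_1-y\rVert^2$, $m_2=2(x_1-y)(x_1-y)^t/\lVert x_1-y\rVert^2-I_n$ and $t_2=2\log\lVert x_1-y\rVert$. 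Hence
\[
 n_y^{-1}\overline{n}_x = \overline{n}_{v_1}\;m_2\,e^{t_2H_0}\,n_{v_1}\,m_1\,e^{t_1H_0}\,n_{x_1}.
\]
To bring the part to the right of $\overline{n}_{v_1}$ into the form $m\,e^{tH_0}\,n_1$ with $m\in M$ and $n_1\in N$, I would use that $M$ centralizes $A$, that $mn_vm^{-1}=n_{mv}$ for $m\in M$, and that $e^{sH_0}n_ve^{-sH_0}=n_{e^sv}$ (because $[H_0,N_j]=N_j$); pushing all $N$-factors to the right collapses this into $m_2m_1\cdot e^{(t_1+t_2)H_0}\cdot n_1$ for an explicit $n_1\in N$ whose precise form is irrelevant here. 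This already shows $n_y^{-1}\overline{n}_x\in\overline{N}P$ with $z=v_1$, $m=m_2m_1$ and $t=t_1+t_2$.

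It then remains to rewrite these three quantities in the stated form. Substituting $x_1-y=-(x+\lVert x\rVert^2y)/\lVert x\rVert^2$ and $\lVert x_1-y\rVert^2=\lVert x+\lVert x\rVert^2y\rVert^2/\lVert x\rVert^4$, the powers of $\lVert x\rVert$ cancel and one gets $z=\lVert x\rVert^2(x+\lVert x\rVert^2y)/\lVert x+\lVert x\rVert^2y\rVert^2$, then $t=2\log\lVert x\rVert+2\log\lVert x_1-y\rVert=2\log\bigl(\lVert x+\lVert x\rVert^2y\rVert/\lVert x\rVert\bigr)$, and finally $m=m_2m_1=\bigl(2(x+\lVert x\rVert^2y)(x+\lVert x\rVert^2y)^t/\lVert x+\lVert x\rVert^2y\rVert^2-I_n\bigr)\bigl(2xx^t/\lVert x\rVert^2-I_n\bigr)$, as claimed. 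There is no conceptual obstacle, only a little bookkeeping; the two minor points to watch are that $w_0$ lies in $\mathrm{O}(n+1,1)$ rather than in $G$ (harmless, since $n_y^{-1}\overline{n}_x$ and $\overline{n}_zme^{tH_0}n_1$ both lie in $G$) and that the product of the two reflections $m_2m_1$ has determinant $1$, so that $m$ really lies in $M\cong\mathrm{SO}(n)$. Alternatively one could avoid $w_0$ altogether and simply multiply the explicit matrices for $n_y^{-1}$ and $\overline{n}_x$ from the proof of Lemma~\ref{lemma:cnfnljqwca} and read off the Bruhat components, but the reduction above is far less computational.
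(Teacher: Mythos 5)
Your proof is correct and follows essentially the same route as the paper's: both write $n_y^{-1}\overline{n}_x=w_0\overline{n}_{-y}\,w_0\overline{n}_x$, apply Lemma~\ref{lemma:cnfnljqwca} twice, and collapse the remaining $N$-, $M$- and $A$-factors into $me^{tH_0}n_1$ using the commutation relations. Your extra remarks (the determinant of $m_2m_1$, the role of $w_0\in\mathrm{O}(n+1,1)$, and that the hypothesis forces $x\neq 0$) are sound details that the paper leaves implicit.
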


\begin{proof}
  We first write $n_y^{-1} \overline{n}_x = w_0 (w_0 n_{-y}w_0 )w_0 \overline{n}_x=w_0 \overline{n}_{-y}w_0 \overline{n}_x$. By Lemma \ref{lemma:cnfnljqwca} we write $w_0 \overline{n}_x = \overline{n}_v me^{t H_0 }n_v $ so that
  $n_y^{-1} \overline{n}_x = w_0 \overline{n}_{v-y} m e^{t H_0 }n_v$.
  Using Lemma \ref{lemma:cnfnljqwca} again we write $w_0 \overline{n}_{v-y}=\overline{n}_{z}\tilde{m}e^{\tilde{t}H_0}n_z $ with
  \begin{align*}
    z=-\frac{v-y}{\lVert v-y \rVert^2 }=-\frac{-\frac{x}{\lVert x \rVert^2 }-y}{\lVert -\frac{x}{\lVert x \rVert^2 }-y \rVert^2 }=\frac{\lVert x \rVert^2 (x+\lVert x \rVert^2 y)}{\lVert x+\lVert x \rVert^2 y \rVert^2 },\ \tilde{m}= 2 \frac{\left( \frac{x}{\lVert x \rVert^2}+y \right)\left( \frac{x}{\lVert x \rVert^2 }+y \right)^t }{\lVert \frac{x}{\lVert x \rVert^2}+y \rVert^2 }-I_n
  \end{align*}
  and $\tilde{t}= 2 \log \lVert \frac{x}{\lVert x \rVert^2 }+y \rVert$. Thus,
  \begin{align*}
    n_y^{-1} \overline{n}_x = \overline{n}_z \tilde{m}e^{\tilde{t}H_0 }n_zm e^{t H_0 }n_v = \overline{n}_z \tilde{m}m e^{(\tilde{t}+t)H_0 }n_1
  \end{align*}
  for some $n_1 \in N$.
\end{proof}

\subsection{The principal series and intertwining operators}\label{sec:PrincipalSeriesAndIntertwiningOperators}

Identifying $\mathfrak{a}_{\mathbb{C} }^* \cong \mathbb{C} $ by $\lambda\mapsto\lambda(H_0)$, we define, for each $\lambda \in \mathfrak{a}_{\mathbb{C} }^{*}$, a character $e^{\lambda }$ on $A$ by $e^{\lambda }(\exp (t H_0 )) \coloneqq e^{t \lambda }$ for $t \in \mathbb{R} $. For $\sigma \in \widehat{M}$ and $\lambda \in \mathfrak{a}_{\mathbb{C} }^{* }$ we consider the \emph{principal series representation} $\tilde{I}_{\sigma,\lambda}$ induced from the character $\sigma \otimes e^\lambda \otimes 1$ of $P=MAN$, i.e.
\begin{align*}
  \tilde{I}_{\sigma,\lambda}&\coloneqq \mathrm{Ind}_P^G (\sigma \otimes e^\lambda \otimes 1) = \left\{ f \in C^{\infty } (G, V_\sigma )\mid f (gman)= a^{-  (\lambda + \rho )}\sigma (m)^{-1} f (g )\ \forall g \in G,\ man \in P \right\}
\end{align*}
with the left-regular $G$-action $\tilde{\pi }_{\sigma , \lambda }$.

For every $\sigma\in\widehat{M}$ there exists a family of intertwining operators $\tilde{I}_{\sigma,\lambda}\to\tilde{I}_{w_0\sigma,-\lambda}$ depending meromorphically on $\lambda\in\mathbb{C}$. These operators are given by a convergent integral for $\mathrm{Re}(\lambda)>0$ and can be extended to all $\lambda\in\mathbb{C}$ by normalizing with an appropriate gamma factor. To study unitarizability, it is sufficient to consider intertwining operators in the case where $w_0\sigma\simeq\sigma$, and we construct the corresponding family of intertwining operators $\tilde{I}_{\sigma,\lambda}\to\tilde{I}_{\sigma,-\lambda}$ algebraically using the spectrum generating approach of \cite{BransonOlafsson}. We refer to this paper for the details of the construction and only state the results that we need.

We carry out the study of reducibility of $\tilde{I}_{\sigma,\lambda}$ and the construction of intertwining operators in the compact picture. Restriction from $G$ to $K$ defines an isomorphism of $\tilde{I}_{\sigma,\lambda}$ onto
\begin{equation}
	I_{\sigma,\lambda}^{cpt}=\{f\in C^{\infty}(K, V_\sigma )\mid \forall m\in M: f(km)=\sigma(m^{-1} )f(k)\}.
\end{equation}
This isomorphism becomes $G$-equivariant if we endow $I_{\sigma,\lambda}^{cpt}$ with the action
$$ \pi^{cpt}_{\sigma,\lambda}(g)f(k)\coloneqq a(g^{-1} k)^{-(\lambda + \rho)}f(k(g^{-1} k)) \qquad (g\in G,k\in K,f\in I_\sigma^{cpt}(\lambda)), $$
where $a(g)$ resp.\@ $k(g)$ denote the $A$- resp.\@ $K$-component in the Iwasawa decomposition of $g \in G$. The realization on $I_{\sigma,\lambda}^{cpt}$ is called the \emph{compact picture}. The restriction of $\pi_{\sigma,\lambda}^{cpt}$ to $K$ is the left regular representation of $K$ on $I_{\sigma,\lambda}^{cpt}$ and hence independent of $\lambda$. By Frobenius reciprocity and the multiplicity-one property for the pair $(K,M)=(\mathrm{SO}(n+1),\mathrm{SO}(n))$, it decomposes into a multiplicity-free direct sum of $K$-types
$$ I_{\sigma,\lambda}^{cpt} = \widehat{\bigoplus_{\substack{\alpha\in\widehat{K}\\\sigma\preceq\alpha}}}\,\,I_{\sigma,\lambda}^{cpt}(\alpha), $$
where we write $\sigma\preceq\alpha$ if $\sigma$ occurs in $\alpha|_M$. As usual, we parameterize $\alpha$ by its highest weight $(\alpha_1,\ldots,\alpha_m)\in\mathbb{Z}^m$ with $\alpha_1\geq\ldots\geq\alpha_m\geq0$ if $n=2m$ is even and $\alpha_1\geq\ldots\geq\alpha_{m-1}\geq|\alpha_m|$ if $n=2m-1$ is odd. Then $\sigma\preceq\alpha$ if and only if
\begin{alignat}{2}
	&\alpha_1 \geq \sigma_1 \geq \alpha_2 \geq \sigma_2 \geq \ldots \geq \sigma_{m-1} \geq \alpha_m \geq \lvert \sigma_m \rvert \qquad && \text{if }n=2m,\label{eq:InterlacingNeven}\\
	& \alpha_1 \geq \sigma_1 \geq \alpha_2 \geq \sigma_2 \geq \ldots \geq \sigma_{m-1} \geq \lvert \alpha _m \rvert \qquad && \text{if }n=2m-1,\label{eq:InterlacingNodd}
\end{alignat}
where $(\sigma_1,\ldots,\sigma_m)$ resp. $(\sigma_1,\ldots,\sigma_{m-1})$ denotes the highest weight of $\sigma\in\widehat{M}$.

For the construction of intertwining operators, we assume that $\sigma_m=0$ in the case where $n=2m$, then $w_0\sigma\simeq\sigma$ and there exists a meromorphic family of intertwining operators $I_{\sigma,\lambda}^{cpt}\rightarrow I_{\sigma,-\lambda}^{cpt}$. By Schur's Lemma, every such intertwining operator acts by a scalar $a(\alpha)$ on each $K$-type $I_{\sigma,\lambda}^{cpt}(\alpha)$. In \cite[Theorem 3.1]{BransonOlafsson} a meromorphic family of scalars which give rise to intertwining operators is determined:
$$ a(\alpha)=\prod_{k=1}^m\frac{\Gamma(\rho+1-k+\alpha_k-\lambda)}{\Gamma(\rho+1-k+\alpha_k+\lambda)}. $$
Note that for $k=m=\frac{n+1}{2}$ the parameter $\alpha_m$ can be negative, but we can rewrite the corresponding factor
$$ \frac{\Gamma(\rho+1-m+\alpha_m-\lambda)}{\Gamma(\rho+1-m+\alpha_m+\lambda)} = \frac{\Gamma(\frac{1}{2}+\alpha_m-\lambda)}{\Gamma(\frac{1}{2}+\alpha_m+\lambda)} = \frac{\Gamma(\frac{1}{2}+|\alpha_m|-\lambda)}{\Gamma(\frac{1}{2}+|\alpha_m|+\lambda)} $$
by the functional equation for the gamma function. Therefore, we can replace $\alpha_k$ by $|\alpha_k|$ in the formula for $a(\alpha)$. Multiplying $a(\alpha)$ with the meromorphic function
$$ \frac{1}{\Gamma(\rho+\sigma_1-\lambda)}\prod_{k=2}^m\frac{\Gamma(\rho+1-k+\sigma_{k-1}+\lambda)}{\Gamma(\rho+1-k+\sigma_k-\lambda)} $$
leads to the following holomorphic family of scalars
\begin{equation}
	a_{\sigma,\lambda}(\alpha) = \frac{(\rho+\sigma_1-\lambda)_{\alpha_1-\sigma_1}}{\Gamma(\rho+\alpha_1+\lambda)}\prod_{k=2}^m(\rho+1-k+\sigma_k-\lambda)_{|\alpha_k|-\sigma_k}(\rho+1-k+|\alpha_k|+\lambda)_{\sigma_{k-1}-|\alpha_k|}\label{eq:NormalizedEigenvaluesIntertwiners}
\end{equation}
where $(x)_n=\frac{\Gamma(x+n)}{\Gamma(x)}=x(x+1)\cdots(x+n-1)$ denotes the Pochhammer symbol. We let $A_{\sigma,\lambda}:I_{\sigma,\lambda}^{cpt}\to I_{\sigma,-\lambda}^{cpt}$ denote the corresponding intertwining operator.


\subsection{The admissible dual}

In this section we describe the irreducible subrepresentations and quotients of $\tilde{I}_{\sigma,\lambda}$. For the statement it is convenient to put $\sigma_{m+1}=0$ resp. $\sigma_m=0$ in the case where $n=2m$ resp. $n=2m-1$.

\begin{proposition}\label{prop:CompositionSeries}
\begin{enumerate}
	\item The representation $(\pi_{\sigma,\lambda}^{cpt},I_{\sigma,\lambda}^{cpt})$ of $G$ is reducible if and only if $\lambda$ or $-\lambda$ is contained in the set $(\rho-a+\mathbb{N}_0)\setminus\{\rho-k+|\sigma_k|:k=1,\ldots,a\}$, where $a\in\{0,1,\ldots,\lfloor\frac{n}{2}\rfloor\}$ is minimal such that $\sigma_{a+1}=0$.
		\item For $\lambda=\rho+|\sigma_1|+j$ with $j\in\mathbb{N}_0$, there is a unique irreducible subrepresentation $\mathfrak{I}(\sigma,0,j)$ with $K$-types $\{\alpha\succeq\sigma:\alpha_1>\sigma_1+j\}$ and the corresponding quotient $\mathfrak{Q}(\sigma,0,j)$ is also irreducible, finite-dimensional and realized on the $K$-types $\{\alpha\succeq\sigma:\alpha_1\leq\sigma_1+j\}$. If $\sigma_m=0$, then $\mathfrak{I}(\sigma,0,j)$ is the kernel of $A_{\sigma,\lambda}$.
		\item For $\lambda = \rho -i + |\sigma_{i+1}|+j $ with $1\leq i < m$ and $0 \leq j < \sigma_i - |\sigma_{i+1}|$ we have:
  \begin{enumerate}[(i)]
  \item If $i \neq \frac{n-1}{2}$, then there is a unique irreducible subrepresentation $\mathfrak{I}(\sigma,i,j)$ of $I_{\sigma,\lambda}^{cpt}$ with $K$-types $\{\alpha\succeq\sigma:\alpha_{i+1}> |\sigma_{i+1}|+j\}$. The corresponding quotient $\mathfrak{Q}(\sigma ,i, j)$ is also irreducible and realized on the $K$-types $\{\alpha\succeq\sigma:\alpha_{i+1} \leq |\sigma_{i+1}|+j \}$. If $\sigma_m=0$, then $\mathfrak{I}(\sigma,i,j)$ is the kernel of $A_{\sigma,\lambda}$.
  \item If $i= \frac{n-1}{2}$, then the kernel of $A_{\sigma,\lambda}$ is the direct sum of the two irreducible subrepresentations $\mathfrak{I}(\sigma , i, j)^{\pm}$ with $K$-types $\{ \alpha\succeq\sigma: \pm \alpha_{i+1}>j \}$. The corresponding quotient $\mathfrak{Q}(\sigma ,i, j)$ is also irreducible and realized on the $K$-types $\{ \alpha \succeq\sigma:|\alpha_{i+1}|  \leq j\}$.
  \end{enumerate}
\end{enumerate}
\end{proposition}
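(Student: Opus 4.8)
The plan is to reduce the proposition to combinatorics of $K$-types. Since $I^{cpt}_{\sigma,\lambda}|_K$ is multiplicity free, every $(\mathfrak{g},K)$-submodule has the form $\bigoplus_{\alpha\in S}I^{cpt}_{\sigma,\lambda}(\alpha)$ for a subset $S$ of the interlacing poset $\mathcal{P}=\{\alpha\in\widehat{K}:\alpha\succeq\sigma\}$, the ``staircase'' region of $\mathbb{Z}^m$ cut out by \eqref{eq:InterlacingNeven} resp.\ \eqref{eq:InterlacingNodd}. As $\mathfrak{p}_{\mathbb{C}}\cong\mathbb{C}^{n+1}$ is the standard representation of $K=\mathrm{SO}(n+1)$, the $\mathfrak{g}$-action sends $I^{cpt}_{\sigma,\lambda}(\alpha)$ into $\bigoplus_k I^{cpt}_{\sigma,\lambda}(\alpha\pm e_k)$, and by multiplicity one each resulting edge map $I^{cpt}_{\sigma,\lambda}(\alpha)\to I^{cpt}_{\sigma,\lambda}(\alpha\pm e_k)$ is a scalar $c^{\pm}_k(\alpha;\lambda)$, meromorphic in $\lambda$. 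Hence $S$ indexes a submodule precisely when it is saturated under all nonzero edge maps, and everything comes down to describing, for each $\lambda$, the edges along which $c^{\pm}_k(\alpha;\lambda)=0$.

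I would extract these vanishing loci from the normalized intertwining operator rather than compute the $c^{\pm}_k$ directly. The operator $A_{\sigma,\lambda}\colon I^{cpt}_{\sigma,\lambda}\to I^{cpt}_{w_0\sigma,-\lambda}$ is holomorphic and nowhere vanishing in $\lambda$ and acts on the $\alpha$-component by the scalar $a_{\sigma,\lambda}(\alpha)$ of \eqref{eq:NormalizedEigenvaluesIntertwiners}; the Branson--\'Olafsson derivation of this formula uses only that the two principal series have the same $K$-types, which holds because the interlacing conditions involve the last coordinate of $\sigma$ only through its absolute value, so self-duality of $\sigma$ is not needed here. Intertwining $A_{\sigma,\lambda}$ against $\mathfrak{p}_{\mathbb{C}}$ along each edge expresses $c^{\pm}_k(\alpha;\lambda)$ through ratios $a_{\sigma,\lambda}(\alpha\pm e_k)/a_{\sigma,\lambda}(\alpha)$ (with the target scalars formed for $w_0\sigma$), and the functional equation $A_{w_0\sigma,-\lambda}\circ A_{\sigma,\lambda}=(\text{scalar in }\lambda)\cdot\mathrm{id}$ then determines each $c^{\pm}_k(\alpha;\lambda)$ up to a nowhere vanishing factor. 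Inspecting the Pochhammer and gamma factors of $a_{\sigma,\lambda}(\alpha)$ one reads off that for $\lambda=\rho-i+|\sigma_{i+1}|+j$ exactly the ``outward in coordinate $i+1$'' edges past $|\alpha_{i+1}|=|\sigma_{i+1}|+j$ degenerate (the case $i=0$, $\lambda=\rho+|\sigma_1|+j$, coming from the top factor $(\rho+\sigma_1-\lambda)_{\alpha_1-\sigma_1}$ together with the zero of $1/\Gamma(\rho+\alpha_1+\lambda)$ at the corresponding negative value). Therefore $\ker A_{\sigma,\lambda}$ is exactly the subspace claimed for $\mathfrak{I}$, its complementary set of $K$-types carries the image of $A_{\sigma,\lambda}$ in $I^{cpt}_{w_0\sigma,-\lambda}$, and the $\lambda\leftrightarrow-\lambda$ symmetry between $A_{\sigma,\lambda}$ and $A_{w_0\sigma,-\lambda}$ yields both the finite-dimensional quotient at the points $\lambda=\rho+|\sigma_1|+j$ and the full reducibility set, the excluded values $\{\rho-k+|\sigma_k|\}$ being precisely where the would-be degenerate factors are cancelled by the gamma normalization.

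It then remains to assemble the conclusions. First, at each listed $\lambda$ only one family of edges degenerates, so $\mathcal{P}$ breaks into two sets $S$, each edge-connected; since every edge inside such an $S$ that does not leave $S$ is nonzero, it carries an irreducible $(\mathfrak{g},K)$-module, which gives irreducibility of $\mathfrak{I}$ and $\mathfrak{Q}$ and the absence of further composition factors. Second, the $K$-types of $\mathfrak{Q}(\sigma,0,j)$ satisfy $\alpha_1\le\sigma_1+j$ while $\sigma\preceq\alpha$ bounds the remaining coordinates, so there are finitely many of them and $\mathfrak{Q}(\sigma,0,j)$ is finite-dimensional. Third, the exceptional case $i=\tfrac{n-1}{2}$ is handled separately: here $n$ is odd, $i+1=m$ is the last coordinate of the $\mathrm{SO}(2m)$-weight $\alpha$, the poset $\mathcal{P}$ has two ``arms'' $\pm\alpha_m>0$ meeting at $\alpha_m=0$, and at $\lambda=\rho-i+j$ the degenerating edges cut off both arms beyond $\pm\alpha_m=j$, producing the two subrepresentations $\mathfrak{I}(\sigma,i,j)^{\pm}$ and the irreducible quotient on $|\alpha_m|\le j$. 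The main obstacle is the bookkeeping of the second paragraph: tracking zeros versus poles of the gamma factors under the normalization, correctly handling the sign conventions on the last coordinate of $\alpha$ (when $n$ is odd) and of $\sigma$ (when $n$ is even), and verifying the no-collision statement that the $(i,j)$ in the list give genuinely distinct reducibility points off the excluded set, which is what forces the composition series to have length two (length three in the exceptional case).
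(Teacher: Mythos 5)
Your overall skeleton coincides with the paper's: both arguments rest on the multiplicity-free $K$-type decomposition, the fact that $\mathfrak{p}_{\mathbb{C}}$ shifts $K$-types by $\pm e_k$, and the resulting reduction of submodules to edge-saturated subsets of the interlacing poset; the connectivity, finite-dimensionality and exceptional-case considerations in your last paragraph are exactly the paper's. The gap is in your second paragraph, i.e.\ in the mechanism for locating the degenerate edges. First, the formula \eqref{eq:NormalizedEigenvaluesIntertwiners} is not something one can establish ``using only that the two principal series have the same $K$-types'': in Branson--\'Olafsson it is \emph{derived} from the recurrence of Observation~\eqref{eq:BOOobsC}, whose coefficients are precisely the edge constants $c_k^{\pm}(\lambda,\alpha)$ of \eqref{eq:DefinitionCplusminus}. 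Taking \eqref{eq:NormalizedEigenvaluesIntertwiners} as your starting point and then ``extracting'' the $c_k^{\pm}$ from it is therefore circular; the explicit $c_k^{\pm}$ are the primitive input, and the paper uses them directly (Observation~\eqref{eq:BOOobsB} together with the elementary sign analysis in Lemmas~\ref{lem:CompositionSeries1} and \ref{lemma:cngyxhhv0u}), reserving \eqref{eq:NormalizedEigenvaluesIntertwiners} for the separate claim identifying $\ker A_{\sigma,\lambda}$.

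Second, even granting \eqref{eq:NormalizedEigenvaluesIntertwiners} as a black box, the proposed determination of the individual $c_k^{\pm}$ does not go through. The recurrence only yields the ratio $a_{\sigma,\lambda}(\alpha+e_k)/a_{\sigma,\lambda}(\alpha)=c_k^{-}(\lambda,\alpha+e_k)/c_k^{+}(\lambda,\alpha)$, and both this ratio and the functional equation relating $A_{\sigma,\lambda}$ and $A_{w_0\sigma,-\lambda}$ are symmetric under $\lambda\mapsto-\lambda$ (the two linear factors vanish at $\lambda=\mp(\rho+1-k+\alpha_k)$). Hence they cannot decide \emph{which} of the two opposite edges across a given wall degenerates at a given positive $\lambda$ --- that is, whether the $K$-types beyond the wall form a subrepresentation or a quotient. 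That distinction is precisely the content of parts (2) and (3) of the proposition (e.g.\ that $\mathfrak{Q}(\sigma,0,j)$, and not $\mathfrak{I}(\sigma,0,j)$, is the finite-dimensional constituent at $\lambda=\rho+|\sigma_1|+j$). Resolving it requires the actual signed affine expressions $c_k^{+}(\lambda,\alpha)=\rho+1-k+\alpha_k+\lambda$ and $c_k^{-}(\lambda,\alpha)=\rho-k+\alpha_k-\lambda$, i.e.\ Observation~\eqref{eq:BOOobsB} --- at which point the detour through the intertwining operator buys you nothing, and you are back to the paper's argument.
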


We summarize the key observations from \cite[(2.13), Section 3.a and Remark 3.2]{BransonOlafsson} that are necessary to prove this result:
\begin{enumerate}[(A)]
	\item\label{eq:BOOobsA} Applying $d\pi_{\sigma,\lambda}^{cpt}(\mathfrak{g})$ to a $K$-type $I_{\sigma,\lambda}^{cpt}(\alpha)$ maps into the direct sum of the $K$-types $I_{\sigma,\lambda}^{cpt}(\beta)$ for $\beta\in\{\alpha\}\cup\{\alpha\pm e_k:k=1,\ldots,m\}$.
	\item\label{eq:BOOobsB} For $\beta=\alpha+e_k$ resp. $\beta=\alpha-e_k$, the $K$-type $I_{\sigma,\lambda}^{cpt}(\beta)$ is in fact contained in the image of $I_{\sigma,\lambda}^{cpt}(\alpha)$ under $d\pi_{\sigma,\lambda}^{cpt}(\mathfrak{g})$ if and only if
	\begin{equation}
		c_k^+(\lambda,\alpha)=\rho+1-k+\alpha_k+\lambda \neq 0 \qquad \mbox{resp.} \qquad c_k^-(\lambda,\alpha)=\rho-k+\alpha_k-\lambda \neq 0.\label{eq:DefinitionCplusminus}
	\end{equation}
	\item\label{eq:BOOobsC} Let $V \subseteq I_{\sigma,\lambda}^{cpt}$ be a subrepresentation. If $n=2m$ and $\sigma_m\neq0$, then there is no non-trivial intertwining operator $V\to I_{\sigma,-\lambda}^{cpt}$. In all other cases a map $A:V\to I_{\sigma,-\lambda}^{cpt}$ is intertwining if and only if it is given by a scalar $a(\alpha)$ on each $K$-type $I_{\sigma,\lambda}^{cpt}(\alpha)\subseteq V$ and the scalars $\{a(\alpha)\}_\alpha$ satisfy the following recurrence relation whenever both $\alpha$ and $\alpha+e_k$ are $K$-types of $V$:
	\begin{equation*}
		(\rho+1-k+\alpha_k+\lambda)\cdot a(\alpha+e_k) =(\rho+1-k+\alpha_k-\lambda)\cdot a(\alpha).
	\end{equation*}
\end{enumerate}

Observation \eqref{eq:BOOobsB} implies in particular that $\pi_{\sigma,\lambda}$ can only be reducible if $\lambda-\rho\in\mathbb{Z}$. Since $\pi_{\sigma,\lambda}$ and $\pi_{\sigma^*,-\lambda}$ are dual to each other, it is sufficient to consider the case $\lambda\geq0$. We treat the cases $\lambda\geq\rho+|\sigma_1|$ and $0\leq\lambda<\rho+|\sigma_1|$ separately.


\begin{lemma}\label{lem:CompositionSeries1}
  Let $\lambda = \rho + |\sigma_1| +j $ for some $j \in \mathbb{N}_0$. Then $I_{\sigma,\lambda}^{cpt}$ has a unique irreducible subrepresentation $\mathfrak{I}(\sigma ,0, j)$ realized on the $K$-types $\{\alpha\succeq\sigma:\alpha_1>|\sigma_1|+j\}$ and the quotient $\mathfrak{Q}(\sigma ,0, j)=I_{\sigma,\lambda}^{cpt}/\mathfrak{I}(\sigma ,0, j)$ is irreducible and finite-dimensional with $K$-types $\{\alpha\succeq\sigma:\alpha_1\leq|\sigma_1|+j\}$. Moreover, if $\sigma_m=0$ then $\mathfrak{I}(\sigma ,0, j)$ is the kernel of the intertwining operator $A_{\sigma,\lambda}$ and $\mathfrak{Q}(\sigma , 0,j)$ is isomorphic to its image.
\end{lemma}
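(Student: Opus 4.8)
The plan is to run the whole argument in the compact picture, using the combinatorial description of $(\mathfrak g,K)$-submodules furnished by observations \eqref{eq:BOOobsA}, \eqref{eq:BOOobsB} and \eqref{eq:BOOobsC}. Since $\pi_{\sigma,\lambda}^{cpt}|_K$ is multiplicity-free, every subrepresentation of $I_{\sigma,\lambda}^{cpt}$ has the form $\bigoplus_{\alpha\in S}I_{\sigma,\lambda}^{cpt}(\alpha)$ for a set $S$ of $K$-types $\alpha\succeq\sigma$, and by \eqref{eq:BOOobsA}, \eqref{eq:BOOobsB} such a subspace is $\mathfrak g$-stable precisely when $S$ is \emph{forward closed}: whenever $\alpha\in S$ and $\alpha\pm e_k$ is a $K$-type with $c_k^\pm(\lambda,\alpha)\neq0$ (cf.\ \eqref{eq:DefinitionCplusminus}), then $\alpha\pm e_k\in S$. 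Thus all the structural claims become statements about the directed graph on $\{\alpha\succeq\sigma\}$ with these edges.

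First I would evaluate the coefficients $c_k^\pm$ at $\lambda=\rho+|\sigma_1|+j$ (recall $|\sigma_1|=\sigma_1$). One gets $c_1^+(\lambda,\alpha)=n+\alpha_1+\sigma_1+j>0$, so the edge $\alpha\to\alpha+e_1$ is always present, and $\alpha+e_1$ is always a $K$-type since $\alpha_1$ is unbounded above; $c_1^-(\lambda,\alpha)=\alpha_1-\sigma_1-j-1$, which vanishes exactly when $\alpha_1=\sigma_1+j+1$; and, using the interlacing bound $|\alpha_k|\le\sigma_{k-1}\le\sigma_1$ for $k\ge2$, one checks $c_k^+(\lambda,\alpha)\ge m+j>0$ and $c_k^-(\lambda,\alpha)\le-k-j<0$, so for $k\ge2$ the edges $\alpha\to\alpha\pm e_k$ are present whenever the target is a $K$-type. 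The upshot is that the \emph{only} potential edge that is ever absent is $\alpha\to\alpha-e_1$ when $\alpha_1=\sigma_1+j+1$. Consequently, inside each of the two sets $S_+=\{\alpha\succeq\sigma:\alpha_1>\sigma_1+j\}$ and $S_-=\{\alpha\succeq\sigma:\alpha_1\le\sigma_1+j\}$ every adjacency edge between $K$-types is present, and a short computation shows that these edges are present in both directions. Note also that $S_-$ is finite, since then $\sigma_1\le\alpha_1\le\sigma_1+j$ while each remaining coordinate is confined to an interval depending only on $\sigma$; in fact $S_-$ is precisely a product of such intervals, hence connected under single-coordinate moves.

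The structural statements then follow. No edge out of $S_+$ can decrease $\alpha_1$ below $\sigma_1+j+1$ — the move $\alpha\to\alpha-e_1$ is available only when $\alpha_1\ge\sigma_1+j+2$ — so $S_+$ is forward closed, and $\mathfrak I(\sigma,0,j):=\bigoplus_{\alpha\in S_+}I_{\sigma,\lambda}^{cpt}(\alpha)$ is a subrepresentation with the asserted $K$-types, while its complementary set of $K$-types $S_-$ is the set of $K$-types of the quotient $\mathfrak Q(\sigma,0,j)$, which is therefore finite-dimensional. For irreducibility and uniqueness: any nonzero forward closed $S$ contains some $\alpha$, and iterated application of $\alpha\to\alpha+e_1$ forces $S$ to meet $S_+$; and starting from any element of $S_+$ one reaches all of $S_+$ (first adjust the coordinates $\alpha_2,\dots,\alpha_m$ within their $\sigma$-intervals using the bidirectional $\pm e_k$-moves, then move $\alpha_1$, keeping it $\ge\sigma_1+j+1$ so that $\alpha\to\alpha-e_1$ stays available). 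Hence $\mathfrak I(\sigma,0,j)$ is contained in every nonzero subrepresentation and is itself irreducible, so it is the unique irreducible subrepresentation. Dually, a submodule of $\mathfrak Q(\sigma,0,j)$ corresponds to a subset of $S_-$ that is forward closed relative to $S_-$; since the edges inside $S_-$ are bidirectional and $S_-$ is a connected box, the only nonempty such subset is $S_-$, so $\mathfrak Q(\sigma,0,j)$ is irreducible.

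Finally, when $\sigma_m=0$ the intertwining operator $A_{\sigma,\lambda}$ exists and acts on $I_{\sigma,\lambda}^{cpt}(\alpha)$ by the scalar $a_{\sigma,\lambda}(\alpha)$ from \eqref{eq:NormalizedEigenvaluesIntertwiners}. Substituting $\lambda=\rho+\sigma_1+j$ turns the leading factor into the Pochhammer symbol $(-j)_{\alpha_1-\sigma_1}$, which vanishes exactly when $\alpha_1-\sigma_1>j$, i.e.\ when $\alpha\in S_+$; it then remains to check that no other factor of $a_{\sigma,\lambda}(\alpha)$ vanishes on a $K$-type. Here $\Gamma(\rho+\alpha_1+\lambda)^{-1}=\Gamma(n+\alpha_1+\sigma_1+j)^{-1}$ is finite and nonzero, and for $k\ge2$ the bases $\rho+1-k+\sigma_k-\lambda=1-k+\sigma_k-\sigma_1-j$ and $\rho+1-k+|\alpha_k|+\lambda=n+1-k+|\alpha_k|+\sigma_1+j$, together with $|\alpha_k|\le\sigma_{k-1}\le\sigma_1$, make the two Pochhammer symbols nonzero (the first base is a negative integer whose modulus exceeds the number of factors, the second is a positive integer). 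Hence $\ker A_{\sigma,\lambda}=\bigoplus_{\alpha\in S_+}I_{\sigma,\lambda}^{cpt}(\alpha)=\mathfrak I(\sigma,0,j)$, and since $A_{\sigma,\lambda}\neq0$ we obtain $\mathfrak Q(\sigma,0,j)\cong\operatorname{im}A_{\sigma,\lambda}$. I expect the main obstacle to be the graph-connectivity bookkeeping of the third paragraph — in particular the distinguished role of the first coordinate and the sign conventions for the last coordinate when $n$ is odd; the evaluations of the $c_k^\pm$ and of the non-leading Pochhammer factors are routine once the interlacing bounds are in place.
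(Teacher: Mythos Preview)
Your proposal is correct and follows essentially the same approach as the paper: both use observations \eqref{eq:BOOobsA}--\eqref{eq:BOOobsC} to reduce the composition structure to the vanishing pattern of the coefficients $c_k^\pm(\lambda,\alpha)$, and then identify $\ker A_{\sigma,\lambda}$ by examining the Pochhammer factors in \eqref{eq:NormalizedEigenvaluesIntertwiners}. Your write-up is simply more explicit than the paper's terse version --- in particular you spell out the graph-connectivity argument for irreducibility and the product-of-intervals structure of the $K$-type sets, which the paper leaves implicit in the phrase ``This shows the first claim.''
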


\begin{proof}
	Using the inequalities in \eqref{eq:InterlacingNeven} and \eqref{eq:InterlacingNodd} one can show that $c_k^+(\lambda,\alpha)>0$ for all $k=1,\ldots,m$ and all $\alpha\succeq\sigma$. Moreover,
	$$ c_k^-(\lambda,\alpha) = -k+\alpha_k-|\sigma_1|-j \begin{cases}\leq -k-j<0&\mbox{for $k>1$,}\\=-1+\alpha_1-|\sigma_1|-j&\mbox{for $k=1$.}\end{cases} $$
	It follows that $c_k^-(\lambda,\alpha)=0$ if and only if $k=1$ and $\alpha_1=|\sigma_1|+j+1$. This shows the first claim. For the second claim it suffices to observe that the only factor in \eqref{eq:NormalizedEigenvaluesIntertwiners} which can vanish is $(\rho+\sigma_1-\lambda)_{\alpha_1-\sigma_1}=(-j)_{\alpha_1-\sigma_1}$, and it vanishes if and only if $\alpha_1-\sigma_1>j$.
\end{proof}

We now discuss the case $0\leq\lambda<\rho+|\sigma_1|$.


\begin{lemma}\label{lemma:cngyxhhv0u}
  Let $0 \leq  \lambda < \rho + |\sigma_1| $. Then $I_{\sigma,\lambda}^{cpt}$ is reducible if and only if $\lambda = \rho -i + |\sigma_{i+1}|+j $ for some $1\leq i \leq m-1$ with $0 \leq j < \sigma_i - |\sigma_{i+1}|$.
  \begin{enumerate}[(i)]
  \item If $i \neq \frac{n-1}{2}$, then $I_{\sigma,\lambda}^{cpt}$ has a unique irreducible subrepresentation $\mathfrak{I}(\sigma ,i, j)$ realized on the $K$-types $\left\{\alpha\succeq\sigma \mid \alpha_{i+1}> |\sigma_{i+1}|+j \right\}$. The corresponding quotient $\mathfrak{Q}(\sigma , i, j)$ is also irreducible and realized on the $K$-types $\left\{ \alpha\succeq\sigma: \alpha_{i+1} \leq |\sigma_{i+1}|+j \right\}$. Moreover, if $\sigma_m=0$ then $\mathfrak{I}(\sigma ,i, j)$ is the kernel of the intertwining operator $A_{\sigma,\lambda}$ and $\mathfrak{Q}(\sigma , i, j)$ is isomorphic to its image.
    \item If $i= \frac{n-1}{2}$, then $I_{\sigma,\lambda}^{cpt}$ has two irreducible subrepresentations $\mathfrak{I}(\sigma , \frac{n-1}{2}, j)^{\pm}$ realized on the $K$-types $\{ \alpha\succeq\sigma:\pm \alpha_{i+1}>j \}$. The corresponding quotient $\mathfrak{Q}(\sigma ,i, j)$ is also irreducible and realized on the $K$-types $\left\{ \alpha\succeq\sigma: |\alpha_{i+1}|  \leq j\right\}$. Moreover, the direct sum $\mathfrak{I}(\sigma , \frac{n-1}{2}, j)^+\oplus\mathfrak{I}(\sigma , \frac{n-1}{2}, j)^-$ is the kernel of the intertwining operator $A_{\sigma,\lambda}$ and $\mathfrak{Q}(\sigma ,i, j)$ is isomorphic to its image.
    \end{enumerate}
\end{lemma}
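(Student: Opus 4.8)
The strategy is to analyze the vanishing of the coefficients $c_k^\pm(\lambda,\alpha)$ from \eqref{eq:DefinitionCplusminus} and of the scalars $a_{\sigma,\lambda}(\alpha)$ from \eqref{eq:NormalizedEigenvaluesIntertwiners} on the set of $K$-types $\alpha\succeq\sigma$, exactly as in the proof of Lemma~\ref{lem:CompositionSeries1}, but now in the range $0\leq\lambda<\rho+|\sigma_1|$. First I would write $\lambda=\rho-i+|\sigma_{i+1}|+j$ (the only candidate values for reducibility, since observation~\eqref{eq:BOOobsB} forces $\lambda-\rho\in\mathbb{Z}$ and one checks the remaining integer values in $[0,\rho+|\sigma_1|)$ are of this form for suitable $1\leq i\leq m-1$, $0\leq j<\sigma_i-|\sigma_{i+1}|$, after discarding the excluded values $\rho-k+|\sigma_k|$ as in Proposition~\ref{prop:CompositionSeries}(1)). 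Substituting this $\lambda$ into $c_k^+(\lambda,\alpha)=\rho+1-k+\alpha_k+\lambda$ and $c_k^-(\lambda,\alpha)=\rho-k+\alpha_k-\lambda$ and using the interlacing inequalities \eqref{eq:InterlacingNeven}--\eqref{eq:InterlacingNodd} (which give $\alpha_k\geq|\sigma_k|\geq|\sigma_{i+1}|$ for $k\leq i+1$ and $\alpha_k\leq\sigma_{k-1}$), one identifies precisely which $c_k^\pm$ can vanish: I expect that $c_k^+$ never vanishes, while $c_k^-(\lambda,\alpha)=0$ forces $k=i+1$ and $\alpha_{i+1}=|\sigma_{i+1}|+j+1$ (in the odd case $i=\frac{n-1}{2}$ one must be careful with the sign of $\alpha_{i+1}=\alpha_m$, which yields the two values $\pm(|\sigma_{i+1}|+j+1)$ and hence the two subrepresentations).

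From this, the submodule structure follows by the standard argument: a subrepresentation is a union of $K$-types closed under $d\pi^{cpt}_{\sigma,\lambda}(\mathfrak g)$, and by~\eqref{eq:BOOobsA}--\eqref{eq:BOOobsB} two $K$-types $\alpha$ and $\alpha+e_k$ lie in the same subquotient unless the relevant $c_k^\pm$ vanishes. The vanishing locus computed above separates exactly $\{\alpha\succeq\sigma:\alpha_{i+1}>|\sigma_{i+1}|+j\}$ from its complement (resp. the three pieces $\pm\alpha_{i+1}>j$ and $|\alpha_{i+1}|\leq j$ in the odd case), and one checks each piece is indecomposable by verifying that any two of its $K$-types are connected by a chain of nonvanishing $c_k^\pm$-steps; this gives irreducibility of $\mathfrak I(\sigma,i,j)$ (resp. $\mathfrak I(\sigma,\tfrac{n-1}{2},j)^\pm$) and of the quotient $\mathfrak Q(\sigma,i,j)$, and that there are no further subrepresentations. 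Conversely, if $\lambda$ is \emph{not} of the stated form, no $c_k^-$ vanishes anywhere on $\{\alpha\succeq\sigma\}$, so all $K$-types are connected and $I^{cpt}_{\sigma,\lambda}$ is irreducible.

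For the last assertion, assume $\sigma_m=0$ (so $w_0\sigma\simeq\sigma$ and $A_{\sigma,\lambda}$ exists). By observation~\eqref{eq:BOOobsC}, $\ker A_{\sigma,\lambda}$ is spanned by the $K$-types $\alpha$ with $a_{\sigma,\lambda}(\alpha)=0$. I would read off from \eqref{eq:NormalizedEigenvaluesIntertwiners} which Pochhammer factors can vanish at $\lambda=\rho-i+|\sigma_{i+1}|+j$: the factor $(\rho+\sigma_1-\lambda)_{\alpha_1-\sigma_1}$ cannot vanish since $\rho+\sigma_1-\lambda=i+\sigma_1-|\sigma_{i+1}|-j>0$; in the product over $k\geq 2$, the factor $(\rho+1-k+\sigma_k-\lambda)_{|\alpha_k|-\sigma_k}$ has first argument $i+1-k+\sigma_k-|\sigma_{i+1}|-j$, which is a nonpositive integer (allowing vanishing) precisely when $k=i+1$ (using $\sigma_{i+1}\geq|\sigma_{i+1}|$ and $0\leq j<\sigma_i-|\sigma_{i+1}|$), and then it vanishes iff $|\alpha_{i+1}|-\sigma_{i+1}>i+1-k+\sigma_k-|\sigma_{i+1}|-j=\sigma_{i+1}-|\sigma_{i+1}|-j$, i.e. iff $|\alpha_{i+1}|>|\sigma_{i+1}|+j$; the factors $(\rho+1-k+|\alpha_k|+\lambda)_{\sigma_{k-1}-|\alpha_k|}$ have positive first argument and never vanish. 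Hence $a_{\sigma,\lambda}(\alpha)=0\iff |\alpha_{i+1}|>|\sigma_{i+1}|+j$, which is exactly the $K$-type set of $\mathfrak I(\sigma,i,j)$ (resp. of $\mathfrak I^+\oplus\mathfrak I^-$ when $i=\frac{n-1}{2}$), and the image of $A_{\sigma,\lambda}$ is the complementary $K$-type set, i.e. $\mathfrak Q(\sigma,i,j)$. The main obstacle I anticipate is the bookkeeping in the odd case $n=2m-1$ with $i=m-1=\frac{n-1}{2}$: there $\alpha_{i+1}=\alpha_m$ ranges over all of $\mathbb Z$ subject to $|\alpha_m|\leq\sigma_{m-1}$, the branching condition involves $|\alpha_m|$ rather than $\alpha_m$, and one must carefully track that the Lie algebra action can change the sign of $\alpha_m$ only through $\alpha_m=0$ — so that the two halves $\pm\alpha_m>j$ are genuinely separate invariant subspaces precisely because the connecting steps through $\alpha_m\in\{-j,\dots,j\}$ all lie in the quotient; everything else is a routine but somewhat lengthy verification with the interlacing inequalities and Pochhammer symbols.
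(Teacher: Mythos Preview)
Your overall strategy is exactly the paper's: analyze the zeros of $c_k^\pm(\lambda,\alpha)$ on the lattice of $K$-types to find the barriers, then read off the kernel of $A_{\sigma,\lambda}$ from the Pochhammer factors in \eqref{eq:NormalizedEigenvaluesIntertwiners}. The kernel computation is correct and matches the paper.

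There is, however, a genuine error in the odd case $i=\tfrac{n-1}{2}=m-1$. Your expectation that $c_k^+$ never vanishes is false precisely here, and this is what produces the second subrepresentation. With $n=2m-1$, $k=m$ and $\lambda=\rho-(m-1)+j=\tfrac12+j$ one has
\[
c_m^-(\lambda,\alpha)=\alpha_m-j-1,\qquad c_m^+(\lambda,\alpha)=\alpha_m+j+1,
\]
so $c_m^-$ vanishes only at $\alpha_m=j+1$ (the barrier preventing $\mathfrak I^+$ from descending into the quotient), while the barrier at $\alpha_m=-j-1$ that makes $\mathfrak I^-$ a subrepresentation comes from $c_m^+=0$, not from $c_m^-$. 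Since $c_m^-$ is linear in $\alpha_m$ (not in $|\alpha_m|$), it cannot supply two zeros; your parenthetical ``the two values $\pm(|\sigma_{i+1}|+j+1)$'' is therefore not how the two pieces arise. If you assume $c_m^+\neq0$ everywhere, then from $\alpha_m=-j-1$ one could pass to $\alpha_m=-j$ by the $\mathfrak g$-action, and $\{\alpha:\alpha_m<-j\}$ would not be invariant. Once you correct this single point (allow $c_m^+$ to vanish when $\alpha_m<0$ in the odd case), the rest of your argument goes through verbatim and coincides with the paper's proof.
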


\begin{proof}
	We first note that for $0\leq\lambda<\rho+|\sigma_1|$ and $\alpha\succeq\sigma$ with $\alpha+e_1\succeq\sigma$ resp. $\alpha-e_1\succeq\sigma$ we have
	$$ c_1^+(\lambda,\alpha) = \rho+\alpha_1+\lambda \geq \rho > 0 \qquad \mbox{resp.} \qquad c_1^-(\lambda,\alpha) = \rho-1+\alpha_1-\lambda > -1+\alpha_1-|\sigma_1| > 0, $$
	so that there is no reducibility in the $e_1$-direction. We therefore let $1<k\leq m=\lfloor\frac{n+1}{2}\rfloor$ and determine for which $\lambda$ and $\alpha$ one cannot reach the $K$-type $\alpha\pm e_k$ from $\alpha$. Starting with $\alpha+e_k$, we find
	$$ c_k^+(\lambda,\alpha) = \rho+1-k+\alpha_k+\lambda \geq \rho+1-k+\alpha_k \geq \frac{1}{2}+\alpha_k, $$
	so we can only have $c_k^+(\lambda,\alpha)=0$ if $\alpha_k$ is negative. This is only possible if $n=2m-1$ is odd and $k=m$, and in this case $c_m^+(\lambda,\alpha)=0$ if and only if $\lambda=-\rho-1+m-\alpha_m=-\frac{1}{2}-\alpha_m$. This produces the points of reducibility $\lambda=\frac{1}{2}+j=\rho-(m-1)+j$ with $0\leq j<\sigma_{m-1}$ and the corresponding subrepresentation has $K$-types $\{\alpha\succeq\sigma:\alpha_m<-j\}$. On the other hand, we cannot reach $\alpha-e_k$ from $\alpha$ if $c_k^-(\lambda,\alpha)=\rho-k+\alpha_k-\lambda=0$. This only happens if $\lambda=\rho-k+\alpha_k$, where $\sigma_{k-1}\geq\alpha_k>|\sigma_k|$ (since $\alpha-e_k$ is a $K$-type as well). Writing $k=i+1$ we obtain the points of reducibility $\lambda=\rho-i+|\sigma_{i+1}|+j$ with $0\leq j<\sigma_i-|\sigma_{i+1}|$ and the corresponding subrepresentation has $K$-types $\{\alpha\succeq\sigma:\alpha_{i+1}>|\sigma_{i+1}|+j\}$. With the convention that $\sigma_m=0$ for $n=2m-1$ odd this implies the claims about subrepresentations and reducibility.\\
	We now show the claims about the kernel of the intertwining operator $A_{\sigma,\lambda}$ using the explicit expression \eqref{eq:NormalizedEigenvaluesIntertwiners} for its eigenvalues. For $0\leq\lambda<\rho+|\sigma_1|$ clearly the factors $(\rho+|\sigma_1|-\lambda)_{\alpha_1-|\sigma_1|}$ and $\Gamma(\rho+\alpha_1+\lambda)^{-1}$ are non-zero.  Now let $\lambda=\rho-i+|\sigma_{i+1}|+j$ with $1\leq i<m$ and $0\leq j<\sigma_i-|\sigma_{i+1}|$, then
	\begin{align*}
		(\rho+1-k+\sigma_k-\lambda)_{|\alpha_k|-\sigma_k} &= (i+1-k+\sigma_k-|\sigma_{i+1}|-j)_{|\alpha_k|-\sigma_k},\\
		(\rho+1-k+|\alpha_k|+\lambda)_{\sigma_{k-1}-|\alpha_k|} &= (2\rho+1-i-k+|\alpha_k|+\sigma_{i+1}+j)_{\sigma_{k-1}-|\alpha_k|}.
	\end{align*}
	The second term is always non-zero since $i<m$, $k\leq m$, $j\geq0$ together with \eqref{eq:InterlacingNeven} for $n=2m$ even imply
	$$ 2\rho+1-i-k+|\alpha_k|+\sigma_{i+1}+j = n+1-i-k+|\alpha_k|+\sigma_{i+1}+j > n+1-2m \geq 0. $$
	The first term vanishes if and only if
	$$ i+1-k+\sigma_k-|\sigma_{i+1}|-j \leq 0 \qquad \mbox{and} \qquad i+1-k+|\alpha_k|-|\sigma_{i+1}|-j > 0. $$
	For $i+1>k$ both terms are $>0$ and for $i+1<k$ both are $<0$. For $i+1=k$ the first term is equal to $0$ (hence in particular $\leq0$) and the second term is equal to $|\alpha_{i+1}|-\sigma_{i+1}-j$ which is $>0$ if and only if $|\alpha_{i+1}|>\sigma_{i+1}+j$. This shows that the kernel of $A_{\sigma,\lambda}$ consists of the $K$-types $\{\alpha\succeq\sigma:|\alpha_{i+1}|>\sigma_{i+1}+j\}$, so it equals $\mathfrak{I}(\sigma,i,j)$ if $i\neq\frac{n-1}{2}$ and $\mathfrak{I}(\sigma,i,j)^+\oplus\mathfrak{I}(\sigma,i,j)^-$ if $i=\frac{n-1}{2}$.
\end{proof}

\begin{proof}[Proof of Proposition~\ref{prop:CompositionSeries}]
Combining Lemma~\ref{lem:CompositionSeries1} and \ref{lemma:cngyxhhv0u} shows the statements.
\end{proof}

\subsection{The unitary dual}

We now turn to the question of unitarity. For this, we first determine which of the irreducible subrepresentations and quotients are unitarizable. For the composition factors from Lemma~\ref{lem:CompositionSeries1} we find:

\begin{lemma}\label{lem:Unitarity1}
	$\mathfrak{I}(\sigma ,0, j)$ is unitarizable if and only if $\sigma$ is trivial. $\mathfrak{Q}(\sigma ,0,j)$ is unitarizable if and only if it is the trivial representation, i.e., if $\sigma = 0$ and $j = 0$.
\end{lemma}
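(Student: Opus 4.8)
The plan is to prove the two unitarizability statements by a combination of: (i) identifying $\mathfrak{Q}(\sigma,0,j)$ and $\mathfrak{I}(\sigma,0,j)$ explicitly among the known representations, and (ii) invoking the classification of the unitary dual recalled at the start of the section together with infinitesimal character / Dirac inequality type obstructions. Since $\lambda=\rho+|\sigma_1|+j$ is real and $>\rho$, we are well outside the unitary and complementary series range, so the only way a composition factor at such a point can be unitary is if it is finite-dimensional (and then only the trivial one) or if it is a very special unitarizable quotient/subrepresentation. This makes the trivial representation the natural candidate in both cases, and the work is to show nothing else survives.

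\textbf{Step 1: The quotient.} By Lemma~\ref{lem:CompositionSeries1}, $\mathfrak{Q}(\sigma,0,j)$ is finite-dimensional with $K$-types $\{\alpha\succeq\sigma:\alpha_1\le|\sigma_1|+j\}$. A finite-dimensional representation of the simple Lie group $G=\mathrm{SO}_0(n+1,1)$ is unitarizable if and only if it is trivial (compactness of the unitary group forces the image to be a compact subgroup, but $G$ is noncompact simple with no nontrivial compact quotient, so any nontrivial finite-dimensional unitary representation would give a nontrivial finite-dimensional unitary — hence completely reducible — representation whose kernel is a proper normal subgroup, impossible unless the representation is trivial). The only way $\mathfrak{Q}(\sigma,0,j)$ is the trivial one-dimensional representation is that its unique $K$-type is the trivial $K$-type $\alpha=(0,\dots,0)$; since the $K$-types are exactly those $\alpha\succeq\sigma$ with $\alpha_1\le|\sigma_1|+j$, and $\alpha\succeq\sigma$ forces $\alpha_1\ge\sigma_1=|\sigma_1|$ (here $\sigma_1\ge0$), triviality forces $\sigma_1=0$ — hence $\sigma=0$ since an irreducible representation of $\mathrm{SO}(n)$ with $\sigma_1=0$ is trivial — and then $\alpha_1\le j$ combined with irreducibility of $\mathfrak{Q}$ and the requirement that the trivial $K$-type be the \emph{only} one forces $j=0$. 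Conversely, for $\sigma=0$, $j=0$ one checks directly that $\mathfrak{Q}(0,0,0)$ has the single $K$-type $\alpha=0$ and is the trivial representation, which is unitarizable.

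\textbf{Step 2: The subrepresentation.} For $\mathfrak{I}(\sigma,0,j)$ the argument is the reverse: we must show it is unitarizable precisely when $\sigma$ is trivial. When $\sigma=0$: here $\lambda=\rho+j$, and $\mathfrak{I}(0,0,j)$ is an irreducible subrepresentation of $I_{0,\rho+j}^{cpt}$ with finite-dimensional quotient $\mathfrak{Q}(0,0,j)$. These are exactly the representations appearing in the unitary dual classification as the unitarizable "large" pieces of the spherical principal series at positive integral points $\lambda-\rho\in\mathbb{N}_0$ — they are the analogues of the relevant $A_{\mathfrak q}(\lambda)$/cohomological or the degenerate-series unitary representations for $\mathrm{SO}_0(n+1,1)$, and their unitarizability is part of the classical list (Hirai); alternatively one exhibits the invariant inner product via the normalized intertwining operator $A_{0,\lambda}$ as in Lemma~\ref{lem:CompositionSeries1} and checks positivity of the eigenvalue ratios $a_{0,-\lambda}(\alpha)/a_{0,-\lambda}(\alpha')$ on adjacent $K$-types using \eqref{eq:NormalizedEigenvaluesIntertwiners}, which is a finite sign computation. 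When $\sigma\neq0$: by observation \eqref{eq:BOOobsC}, $n=2m$ with $\sigma_m\neq0$ already kills any intertwining operator, so there is no candidate invariant form coming from $A_{\sigma,\lambda}$; in the remaining cases ($\sigma_m=0$ if $n$ even) one must rule out unitarizability of $\mathfrak{I}(\sigma,0,j)$ for every nonzero $\sigma$. The cleanest route is again the classification: the only nonspherical unitarizable composition factors of reducible principal series of $G$ occur at the reducibility points $\lambda=\rho-i+|\sigma_{i+1}|+j$ with $1\le i<m$ (the "complementary-series endpoints" and discrete-series/limit constituents treated in Lemma~\ref{lemma:cngyxhhv0u}), never at $\lambda=\rho+|\sigma_1|+j$ for $\sigma\neq0$; equivalently, the infinitesimal character at $\lambda=\rho+|\sigma_1|+j$ combined with a Dirac-inequality argument (the norm of the infinitesimal character exceeds that of $\rho_G$ too much relative to the lowest $K$-type of $\mathfrak{I}(\sigma,0,j)$, which is $(\sigma_1+j+1,\sigma_2,\dots)$) forbids unitarity unless $\sigma=0$.

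\textbf{Main obstacle.} The routine parts — finite-dimensionality arguments, the Pochhammer sign check for $\sigma=0$ — are straightforward. The genuine difficulty is Step 2 in the direction "$\sigma\ne0\Rightarrow\mathfrak{I}(\sigma,0,j)$ not unitarizable": one needs a clean, self-contained obstruction rather than a black-box appeal to Hirai's classification. I expect the paper supplies this by writing down the (would-be) invariant Hermitian form explicitly via $A_{\sigma,\lambda}$ on $I_{\sigma,\lambda}^{cpt}/\ker$ and its eigenvalues \eqref{eq:NormalizedEigenvaluesIntertwiners}, then exhibiting two $K$-types $\alpha,\alpha'$ (differing by $\pm e_k$ for some $k\ge2$, which exist precisely because $\sigma\ne0$ forces some $\sigma_k>0$ and hence $K$-types with varying $k$-th coordinate) on which the ratio of eigenvalues of the normalized operator $A_{\sigma,-\lambda}A_{\sigma,\lambda}$ is negative — the sign change coming from exactly one Pochhammer factor in \eqref{eq:NormalizedEigenvaluesIntertwiners} passing through zero as we move in a non-$e_1$ direction. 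Nailing down that a genuinely negative ratio occurs (and is not cancelled) for \emph{all} nonzero $\sigma$ and all $j\ge0$, uniformly in $n$ and in the parity of $n$, is the delicate bookkeeping step.
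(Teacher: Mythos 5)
Your Step 1 (the quotient) matches the paper: $\mathfrak{Q}(\sigma,0,j)$ is finite-dimensional, a nontrivial finite-dimensional representation of a noncompact simple group is never unitary, and the trivial representation occurs exactly for $\sigma=0$, $j=0$. The gap is in Step 2, in the direction ``$\sigma\neq0\Rightarrow\mathfrak{I}(\sigma,0,j)$ not unitarizable.'' You correctly guess, in your ``main obstacle'' paragraph, that the paper's mechanism is a sign change of the eigenvalue ratio of a (would-be) intertwining operator in a non-$e_1$ direction, but you never carry it out, and the two fallback routes you actually propose do not close the argument: appealing to Hirai's classification is a black box that is circular in context (this lemma is a step in the paper's own derivation of the unitary dual), and the Dirac-inequality argument is only sketched, with no verification that the inequality actually fails for all nonzero $\sigma$ and all $j$.

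The missing computation is in fact short, not ``delicate bookkeeping.'' By Observation~\eqref{eq:BOOobsC}, any intertwining operator $A:\mathfrak{I}(\sigma,0,j)\to I_{\sigma,-\lambda}^{cpt}$ satisfies, for adjacent $K$-types $\alpha,\alpha+e_k$ with $k\geq2$,
\[
\frac{a(\alpha+e_k)}{a(\alpha)}=\frac{\rho+1-k+\alpha_k-\lambda}{\rho+1-k+\alpha_k+\lambda}=\frac{1-k+\alpha_k-\sigma_1-j}{n+1-k+\alpha_k+\sigma_1+j},
\]
whose numerator is $\leq 1-k-j<0$ (since $\alpha_k\leq\sigma_{k-1}\leq\sigma_1$) while the denominator is positive; so no positive-definite invariant form exists whenever such a pair occurs. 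Such a pair occurs in the $e_k$-direction, $k\geq2$, unless $n=2m$ with $\sigma_1=\ldots=|\sigma_m|$ or $n=2m-1$ with $\sigma=0$; and in the remaining nontrivial even case $\sigma_m\neq0$, Observation~\eqref{eq:BOOobsC} says there is no nonzero intertwining operator at all, hence no invariant Hermitian form. This reduces to $\sigma=0$, where the $K$-types are $(j+1+\ell)e_1$ and the ratios $\frac{1+\ell}{n+2j+1+\ell}$ are positive, giving unitarizability. Without this case analysis (in particular the even-$n$ case $\sigma_1=\cdots=|\sigma_m|\neq0$, where no $e_k$-adjacent pairs exist and one must instead invoke the nonexistence of the intertwiner), your proof of the ``only if'' direction for $\mathfrak{I}(\sigma,0,j)$ is incomplete.
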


\begin{proof}
	The existence of an invariant inner product on a subrepresentation $V\subseteq I_{\sigma,\lambda}^{cpt}$ is equivalent to the existence of an intertwining operator $A:V\to I_{\sigma,-\lambda}^{cpt}$ whose eigenvalues $a(\alpha)$ on the $K$-types $I_{\sigma,\lambda}(\alpha)\subseteq V$ are all strictly positive (see e.g. \cite[p.\@ 203]{BransonOlafsson}). Let us first consider $\mathfrak{I}(\sigma,0,j)$. If both $\alpha$ and $\alpha+e_k$ are $K$-types and $k\geq2$, we have by Observation~\ref{eq:BOOobsC}
	\begin{equation*}
		\frac{a(\alpha+e_k)}{a(\alpha)} = \frac{1-k+ \alpha_k - \sigma_1 -j}{n+1-k+ \alpha_k + \sigma_1 +j}<0
	\end{equation*}
	since $\sigma \preceq \alpha $. Thus, every possible intertwining operator defined on $\mathfrak{I}(\sigma ,0, j)$ changes signs between $\alpha$ and $\alpha+e_k$, so $\mathfrak{I}(\sigma ,0, j)$ is not unitarizable in these cases. This situation never occurs if and only if either $n=2m$ and $\sigma_1=\ldots=\sigma_m$ or if $n=2m-1$ and $\sigma = (0,\ldots,0)$. For $n=2m$ there only exist intertwining operators if $\sigma_m=0$ by Observation~\ref{eq:BOOobsC}, so for both even and odd $n$ we can restrict to the case where $\sigma=(0,\ldots,0)$ in which there exists a non-trivial holomorphic family $A_{\sigma,\lambda}$ of intertwining operators (see Section~\ref{sec:PrincipalSeriesAndIntertwiningOperators}) and hence, by regularization, a non-zero operator $\mathfrak{I}(\sigma,0,j)\to I^{cpt}_{\sigma,-\lambda}$. In this case, only $K$-types of the form $\alpha =(j+1+ \ell)e_1$, $\ell \in \mathbb{N}_0$, occur in $\mathfrak{I}(\sigma,0,j)$ and we have
	\begin{equation*}
		\frac{a((j+2+\ell)e_1)}{a((j+1+\ell)e_1)} = \frac{1+\ell}{n+2j+1+\ell} > 0 \qquad \mbox{for all $\ell\geq0$,}
	\end{equation*}
	so that $\mathfrak{I}(\sigma , 0,j)$ is unitarizable in this case. Finally note that $\mathfrak{Q}(\sigma ,0, j)$ is finite dimensional, so it is unitarizable if and only if it is the trivial representation, i.e., if $\sigma = 0$ and $j=0$.
\end{proof}

Next, we consider the composition factors from Lemma~\ref{lemma:cngyxhhv0u}:

\begin{lemma}\label{lem:Unitarity2}
  Let $\lambda = \rho -i + |\sigma_{i+1}|+j$ as in Lemma \ref{lemma:cngyxhhv0u} and $a=\min\{k:\sigma_{k+1}=0\}=\max\{k:\sigma_k\neq\sigma_{k+1}\}$.
  \begin{enumerate}[(i)]
  \item $\mathfrak{I}(\sigma , i, j)$ is unitarizable if and only if $\sigma_m=0$ and $i=a$,
  \item $\mathfrak{Q}(\sigma ,i ,j)$ is unitarizable if and only if $\sigma_m=0$, $i=a$ and $j=0$,
  \item $\mathfrak{I}(\sigma , \frac{n-1}{2}, j)^{\pm}$ is unitarizable for all $j\geq0$.
  \end{enumerate}
\end{lemma}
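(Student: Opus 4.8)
\emph{Plan.} The idea is to combine the criterion used in the proof of Lemma~\ref{lem:Unitarity1} with the explicit description of the $K$-types of each composition factor. Recall that an invariant inner product on a subrepresentation $V\subseteq I_{\sigma,\lambda}^{cpt}$ is equivalent to an intertwining operator $V\to I_{\sigma,-\lambda}^{cpt}$ whose eigenvalues on the $K$-types of $V$ are all strictly positive, and similarly an invariant inner product on a quotient $I_{\sigma,\lambda}^{cpt}/W$ is equivalent to an intertwining operator $I_{\sigma,\lambda}^{cpt}\to I_{\sigma,-\lambda}^{cpt}$ with $W$ contained in its kernel and strictly positive eigenvalues on the remaining $K$-types. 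By Observation~\ref{eq:BOOobsC} every such operator acts by scalars $a(\alpha)$ satisfying $(\rho+1-k+\alpha_k+\lambda)\,a(\alpha+e_k)=(\rho+1-k+\alpha_k-\lambda)\,a(\alpha)$ whenever $\alpha,\alpha+e_k$ both occur, so on such an edge the ratio $a(\alpha+e_k)/a(\alpha)$ equals $\frac{\mu-\lambda}{\mu+\lambda}$ with $\mu=\mu_k(\alpha):=\rho+1-k+\alpha_k$, and this is positive exactly when $|\mu|>\lambda$. Since Observation~\ref{eq:BOOobsC} also says that for $n=2m$ and $\sigma_m\neq0$ there is no nonzero intertwining operator into $I_{\sigma,-\lambda}^{cpt}$, none of the factors in (i) and (ii) is unitarizable in that case, and I may henceforth assume $\sigma_m=0$. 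I will also use that in the situation of Lemma~\ref{lemma:cngyxhhv0u} one always has $i\le a$, and that $i<a$ forces $\sigma_{i+1}>0$.

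For the ``if'' parts of (i)--(iii) I would argue as follows. When $\sigma_m=0$, Lemma~\ref{lemma:cngyxhhv0u} identifies the factor in question with the kernel resp.\ image of $A_{\sigma,\lambda}$; for quotients $A_{\sigma,\lambda}$ itself is the candidate operator, and for $\mathfrak{I}(\sigma,a,j)$ resp.\ for $\mathfrak{I}(\sigma,\tfrac{n-1}{2},j)^{\pm}$ a regularization of $A_{\sigma,\lambda}$ as in the proof of Lemma~\ref{lem:Unitarity1} (restricted to the irreducible piece in case (iii)) provides a nonzero operator of the required form. Since each factor is irreducible, its $K$-type poset is connected, so it suffices to verify that $\frac{\mu-\lambda}{\mu+\lambda}>0$ on every edge that occurs. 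The crucial point is that for $i=a$ the interlacing conditions \eqref{eq:InterlacingNeven}, \eqref{eq:InterlacingNodd} force $\alpha_k=0$ for all $k\ge a+2$ on the $K$-types of $\mathfrak{I}(\sigma,a,j)$ and $\alpha_{a+1}=0$ on those of $\mathfrak{Q}(\sigma,a,0)$, so only directions $k\le a+1$ resp.\ $k\le a$ carry edges; using $\sigma_a\ge j+1$ (from $j<\sigma_a$) and $\sigma_k\ge\sigma_a$ for $k\le a$ one gets $\mu-\lambda\ge 2$ for $k\le a$ and $\mu-\lambda=\alpha_{a+1}-j\ge 1$ for $k=a+1$, with $\mu+\lambda>0$ throughout. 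For (iii), with $\lambda=\tfrac12+j$: on $\mathfrak{I}^{\pm}$ every $K$-type has $|\alpha_m|\ge j+1$, hence $\sigma_{m-1}\ge j+1$ and $\alpha_k\ge\sigma_k\ge\sigma_{m-1}\ge j+1$ for $k\le m-1$, giving $\mu-\lambda\ge2$ there, while for $k=m$ one has $\mu-\lambda=\alpha_m-j$, which on $\mathfrak{I}^+$ is $\ge1$ with $\mu+\lambda>0$, and on $\mathfrak{I}^-$ has $\mu-\lambda<0$ and $\mu+\lambda=1+\alpha_m+j<0$, so $\frac{\mu-\lambda}{\mu+\lambda}>0$ as well. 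In every case all ratios are positive, so the operator can be rescaled to have positive eigenvalues.

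For the remaining ``only if'' directions (still with $\sigma_m=0$) I would exhibit one edge with negative ratio. For $i<a$: take the $K$-type $\alpha$ with $\alpha_{i+1}=\sigma_{i+1}+j+1$ (which lies in $[\sigma_{i+1},\sigma_i]$ because $j<\sigma_i-\sigma_{i+1}$), $\alpha_k=\sigma_k$ for the remaining $k\le a$, and $\alpha_k=0$ for $k\ge a+1$; then $\alpha$ and $\alpha+e_{a+1}$ both occur in $\mathfrak{I}(\sigma,i,j)$ (and, on replacing $\alpha_{i+1}$ by $\sigma_{i+1}$, both occur in $\mathfrak{Q}(\sigma,i,j)$), and on the $e_{a+1}$-edge $\mu_{a+1}(\alpha)=\rho-a<\rho-i\le\lambda$ with $\mu_{a+1}(\alpha)+\lambda>0$, so the ratio is negative. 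For $i=a$ with $j\ge1$: the $e_{a+1}$-edge inside $\mathfrak{Q}(\sigma,a,j)$ taken at $\alpha_{a+1}=0$ (or at $\alpha_m=-j$ when $i=\tfrac{n-1}{2}$) has $\mu-\lambda=-j<0$ and $\mu+\lambda>0$, again a negative ratio. Hence $\mathfrak{I}(\sigma,i,j)$ is not unitarizable unless $i=a$, and $\mathfrak{Q}(\sigma,i,j)$ is not unitarizable unless $i=a$ and $j=0$, which together with the previous paragraph proves (i), (ii) and (iii).

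I expect the main obstacle to be the combinatorial bookkeeping of precisely which $e_k$-edges occur among the $K$-types of each composition factor---this is where the quantity $a$ and the interlacing conditions really enter---together with the explicit construction in the converse directions of a $K$-type $\alpha$ meeting several interlacing constraints simultaneously while forcing some $\mu_k(\alpha)-\lambda<0$. A minor additional subtlety is the sign analysis on $\mathfrak{I}^-$ in (iii), where numerator and denominator of the eigenvalue ratio are simultaneously negative.
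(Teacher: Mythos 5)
Your proposal is correct and takes essentially the same route as the paper's proof: after reducing to $\sigma_m=0$ via Observation~\eqref{eq:BOOobsC}, both arguments test positivity of the eigenvalue ratios $\frac{\rho+1-k+\alpha_k-\lambda}{\rho+1-k+\alpha_k+\lambda}$ along all $e_k$-edges of the relevant composition factor, with the interlacing conditions determining which edges actually occur. The paper packages this as one sign analysis of numerator and denominator according to whether $k-1$ is less than, equal to, or greater than $i$, whereas you separate the verification for $i=a$ from explicit bad-edge witnesses for the converse; the underlying computation is identical.
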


\begin{proof}
	First note that by Observation~\eqref{eq:BOOobsC} we may assume $\sigma_m=0$ if $n=2m$. Using the same method as in the proof of Lemma~\ref{lem:Unitarity1}, we have to check whether
	\begin{equation*}
		\frac{a(\alpha+e_k)}{a(\alpha)} = \frac{\rho+1-k+\alpha_k-\lambda}{\rho+1-k+\alpha_k+\lambda} > 0
	\end{equation*}
	whenever both $\alpha$ and $\alpha + e_k$ are contained in the relevant composition factor. For the numerator we have
  \begin{equation}\label{eq:cngyxxq8xg}
    \rho+1-k+\alpha_k-\lambda = -(k-1)+i+\alpha_k - |\sigma_{i+1}|-j\begin{cases}
      >0 & \text{if } i=k-1 \text{ and } \alpha_k > |\sigma_{i+1}|+j,\\
      <0 & \text{if } i=k-1 \text{ and } \alpha_k < |\sigma_{i+1}|+j,\\
      >0 & \text{if } i > k-1,\\
      <0 & \text{if } i < k-1,
    \end{cases}
  \end{equation}
  since, if $i < k-1$ then $\sigma_{i+1} \geq \alpha_k$, and if $i > k-1$ then $\alpha_k \geq \sigma_i > |\sigma_{i+1} | +j$. Moreover, for the denominator we find
  \begin{equation}\label{eq:cngyxvbgbm}
    \rho+1-k+\alpha_k+\lambda = n-i-(k-1)+ \alpha_k +| \sigma_{i+1}|+j
    \begin{cases}
      <0      & \text{if } i=k-1= \frac{n-1}{2} \text{ and } -\alpha_k > |\sigma_{i+1}|+j+1,\\
      >0      & \text{else},
    \end{cases}
  \end{equation}
  since $\alpha_k + |\sigma_{i+1}|+j \geq 0$ if $k \neq \frac{n+1}{2}$. For $\mathfrak{I}(\sigma , i, j)$ we have that \eqref{eq:cngyxvbgbm} is always positive since $i \neq \frac{n-1}{2}$. To ensure that also \eqref{eq:cngyxxq8xg} is positive for all $\alpha ,\, \alpha + e_k$ occurring in $\mathcal{I}(\sigma , i, j)$ we have to ensure that $i<k-1$ is not possible. This is the case if and only if $i$ is maximal such that $\sigma_i\neq\sigma_{i+1}$. The other cases follow from similar considerations.
\end{proof}

Finally, we determine the possible complementary series:

\begin{lemma}\label{lem:ComplementarySeries}
  Let $\sigma \in \widehat{M}$ and $\lambda\in\mathbb{R}$. Then $I_{\sigma,\lambda}^{cpt}$ is irreducible and unitarizable (i.e. belongs to the complementary series) if and only if $\sigma_m=0$ and
  \begin{equation*}
    \lvert \lambda \rvert < \rho - a,
  \end{equation*}
  where $a=\max (\left\{ k\mid \sigma_k \neq \sigma_{k+1} =0 \right\} \cup \{0\}) = \min\{k\mid \sigma_{k+1}=0\}$.
\end{lemma}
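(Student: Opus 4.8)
The plan is to combine the reducibility statements of Proposition~\ref{prop:CompositionSeries} with a sign computation for the eigenvalues $a_{\sigma,\lambda}(\alpha)$ of the intertwining operator $A_{\sigma,\lambda}$, in the spirit of Lemmas~\ref{lem:Unitarity1} and \ref{lem:Unitarity2}. Since $I_{\sigma,\lambda}^{cpt}$ and $I_{\sigma^*,-\lambda}^{cpt}$ are dual, both irreducibility and unitarizability are invariant under $\lambda\mapsto-\lambda$, so I may take $\lambda\geq0$, and since $\lambda=0$ gives the unitary principal series I may further assume $\lambda>0$. As in the proof of Lemma~\ref{lem:Unitarity1}, an invariant Hermitian form on $I_{\sigma,\lambda}^{cpt}$ is the same as a non-zero $G$-intertwining map $A\colon I_{\sigma,\lambda}^{cpt}\to I_{\sigma,-\lambda}^{cpt}$, and it is definite precisely when the scalars $a(\alpha)$ by which $A$ acts on the $K$-types all have the same sign. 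If $n=2m$ and $\sigma_m\neq0$, no such $A$ exists by Observation~\eqref{eq:BOOobsC}, so $\sigma_m=0$ is necessary and I assume it henceforth; then $w_0\sigma\simeq\sigma$, for irreducible $I_{\sigma,\lambda}^{cpt}$ every such $A$ is a scalar multiple of the holomorphic family $A_{\sigma,\lambda}$ with eigenvalues \eqref{eq:NormalizedEigenvaluesIntertwiners}, and the problem becomes whether the $a_{\sigma,\lambda}(\alpha)$ ($\alpha\succeq\sigma$) all have a common sign.

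I would then identify $\rho-a$ as the first point of reducibility for $\lambda\geq0$. By Lemmas~\ref{lem:CompositionSeries1} and \ref{lemma:cngyxhhv0u}, $I_{\sigma,\lambda}^{cpt}$ is reducible exactly for $\lambda=\rho+|\sigma_1|+j$ with $j\in\mathbb{N}_0$ and for $\lambda=\rho-i+|\sigma_{i+1}|+j$ with $1\leq i\leq m-1$ and $0\leq j<\sigma_i-|\sigma_{i+1}|$. The smallest such value is $\rho-a$, attained at $i=a$, $j=0$: for $i<a$ one has $\rho-i+|\sigma_{i+1}|>\rho-a$, for $i>a$ the range of $j$ is empty since $\sigma_i=|\sigma_{i+1}|=0$, and $\rho+|\sigma_1|\geq\rho\geq\rho-a$; moreover $\rho-a>0$ because $\sigma_m=0$ forces $a\leq m-1<\rho$. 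Hence $I_{\sigma,\lambda}^{cpt}$ is irreducible for $0<\lambda<\rho-a$, is reducible at $\lambda=\rho-a$, and may be either for $\lambda>\rho-a$.

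For the sign analysis, recall from Observation~\eqref{eq:BOOobsC} that whenever $\alpha$ and $\alpha+e_k$ are both $K$-types one has $a_{\sigma,\lambda}(\alpha+e_k)/a_{\sigma,\lambda}(\alpha)=N/(N+2\lambda)$ with $N=\rho+1-k+\alpha_k-\lambda$, which (as $\lambda>0$) is negative only if $N\in(-2\lambda,0)$, i.e. $\alpha_k\in(k-1-\rho-\lambda,\,k-1-\rho+\lambda)$. Suppose $0<\lambda<\rho-a$. The interlacing inequalities \eqref{eq:InterlacingNeven}--\eqref{eq:InterlacingNodd} pin the $k$-th coordinate of every $K$-type to $0$ for $a+2\leq k\leq m$, so no transition occurs there; for $1\leq k\leq a+1$ one has $\alpha_k\geq0$, while the right endpoint $k-1-\rho+\lambda<k-1-a\leq0$ puts the bad interval entirely to the left of $0$; and the one remaining possibility, a negative last coordinate $\alpha_m$ in the case $n=2m-1$, forces $\sigma_{m-1}\neq0$, hence $a=m-1$ and $\lambda<\tfrac12$, so the bad interval lies in $(-1,0)$ and contains no integer. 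Thus every transition ratio is positive, and since one reads off from \eqref{eq:NormalizedEigenvaluesIntertwiners} that $a_{\sigma,\lambda}(\alpha^{\min})>0$ for the minimal $K$-type $\alpha^{\min}$ and since the $K$-types are connected by transitions, all $a_{\sigma,\lambda}(\alpha)>0$; the form is positive definite and $I_{\sigma,\lambda}^{cpt}$ is unitarizable. Conversely, if $\lambda>\rho-a$ and $I_{\sigma,\lambda}^{cpt}$ is irreducible, take $k=a+1$ and $\alpha=\alpha^{\min}$, whose $(a+1)$st coordinate is $0$: then $\alpha^{\min}+e_{a+1}$ is again a $K$-type and the corresponding ratio is $(\rho-a-\lambda)/(\rho-a+\lambda)<0$, so no invariant Hermitian form is definite and $I_{\sigma,\lambda}^{cpt}$ is not unitarizable. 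Combined with the reduction $\sigma_m=0$, this yields the equivalence.

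The step I expect to be the main obstacle is the sign analysis on $0<\lambda<\rho-a$: one must exclude a negative transition ratio simultaneously for all admissible $K$-types $\alpha$ and all directions $e_k$, which hinges on reading off from the branching inequalities exactly which coordinates of a $K$-type vary and which are frozen to $0$, with the negative-last-coordinate case in odd dimension requiring a separate (but short) argument.
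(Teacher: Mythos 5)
Your proposal is correct, and at bottom it rests on the same quantity as the paper: the threshold $\rho-a$ is exactly $\min\{\lvert c_k^{\pm}(0,\alpha)\rvert\}$ over pairs of adjacent $K$-types, attained at $k=a+1$, $\alpha_{a+1}=0$. The difference is in how the positivity criterion is established. The paper's proof is essentially a citation: it invokes \cite[La.\@ 5.2]{BransonOlafsson} for the statement that complementary series exist precisely for $\sigma_m=0$ and $\lvert\lambda\rvert<\min\{c_k^\pm(0,\alpha)\}$, and then just performs the minimization. You instead re-derive that criterion from scratch by tracking the signs of the transition ratios $a(\alpha+e_k)/a(\alpha)=(\rho+1-k+\alpha_k-\lambda)/(\rho+1-k+\alpha_k+\lambda)$ across all admissible $\alpha$ and $k$, in the style of the paper's Lemmas~\ref{lem:Unitarity1} and~\ref{lem:Unitarity2}: you locate the ``bad interval'' $\alpha_k\in(k-1-\rho-\lambda,\,k-1-\rho+\lambda)$, show via the interlacing conditions \eqref{eq:InterlacingNeven}--\eqref{eq:InterlacingNodd} that it misses all admissible $\alpha_k$ when $0<\lambda<\rho-a$ (with the correct separate treatment of a negative $\alpha_m$ for $n$ odd, where $a=m-1$ forces $\lambda<\tfrac12$), anchor positivity at the minimal $K$-type via \eqref{eq:NormalizedEigenvaluesIntertwiners}, and exhibit the explicit sign change at $k=a+1$, $\alpha=\alpha^{\min}$ for $\lambda>\rho-a$. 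Your identification of $\rho-a$ as the first point of reducibility and the reduction to $\lambda>0$, $\sigma_m=0$ are also sound. What your version buys is self-containedness (the external lemma is replaced by a direct argument, and the converse direction is made explicit rather than absorbed into the cited ``if and only if''); what the paper's version buys is brevity. I see no gap.
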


\begin{proof}
  By \cite[La.\@ 5.2]{BransonOlafsson} we have complementary series representations for $\sigma_m=0$ and
  $$ |\lambda| < \min\{c_k^\pm(0,\alpha):\alpha,\alpha\pm e_k\succeq\sigma\}. $$
  By \eqref{eq:DefinitionCplusminus} we thus have to minimize
  \begin{equation*}
    \lvert \rho-k+\alpha_k+1 \rvert
  \end{equation*}
  for $\sigma \preceq \alpha + e_k, \alpha $ and $1 \leq k \leq m=\lfloor \frac{n+1}{2} \rfloor$. In all cases, this expression is minimal for $k=a+1$ and $\alpha_k=0$.
\end{proof}

We collect the results about unitarity in the following theorem. For simplicity, we put $\sigma_0=\infty$, then the unitarity results for $i=0$ and $i>0$ can be combined.

\begin{theorem}
	For $\sigma\in\widehat{M}$ let $a_\sigma=\min\{k:\sigma_{k+1}=0\}$. The unitary dual of $G$ consists of the following representations:
	\begin{itemize}
		\item The unitary principal series $I_{\sigma,\lambda}$ with $\sigma\in\widehat{M}$ and $\lambda\in i\mathbb{R}$.
		\item The complementary series $I_{\sigma,\lambda}$ with $\sigma\in\widehat{M}$, $\sigma_m=0$ and $0<\lambda<\rho-a_\sigma$.
		\item The unitarizable subrepresentations $\mathfrak{I}(\sigma,a_\sigma,j)$ resp. $\mathfrak{I}(\sigma,a_\sigma,j)^\pm$ with $\sigma_m=0$, $a_\sigma\neq\frac{n+1}{2}$ resp. $a_\sigma=\frac{n+1}{2}$ and $0\leq j<\sigma_{a_\sigma}$.
		\item The unitarizable quotients $\mathfrak{Q}(\sigma,a_\sigma,0)$ with $\sigma\in\widehat{M}$ and $\sigma_m=0$.
	\end{itemize}
\end{theorem}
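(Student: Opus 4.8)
The plan is to derive the theorem by assembling facts already in hand: the description of the admissible dual of $G$ in Proposition~\ref{prop:CompositionSeries}, the unitarizability criteria for the irreducible subquotients in Lemmas~\ref{lem:Unitarity1} and~\ref{lem:Unitarity2}, and the determination of the complementary series in Lemma~\ref{lem:ComplementarySeries}. The only external input is the classical fact (Harish-Chandra's subquotient theorem together with Casselman--Wallach globalization) that passing to Harish-Chandra modules identifies the irreducible unitary representations of $G$ with the irreducible unitarizable $(\mathfrak{g},K)$-modules, and that every such module is an irreducible subquotient of some $\tilde{I}_{\sigma,\lambda}$.

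First I would cut down the parameters. If an irreducible subquotient $\pi$ of $\tilde{I}_{\sigma,\lambda}$ is unitarizable, then its Harish-Chandra module is isomorphic to its Hermitian dual, which is an irreducible subquotient of $\tilde{I}_{\sigma,-\bar{\lambda}}$ (using that $\sigma$, being finite-dimensional unitary, is its own conjugate-dual). Comparing infinitesimal characters --- which for fixed $\sigma$ depend on $\lambda$ only through the pair $\{\lambda,-\lambda\}$ --- forces $-\bar{\lambda}=\pm\lambda$, i.e.\ $\lambda\in i\mathbb{R}$ or $\lambda\in\mathbb{R}$. Since $\tilde{I}_{\sigma,\lambda}$ and $\tilde{I}_{w_0\sigma,-\lambda}$ have the same composition factors and $\sigma\mapsto w_0\sigma$ is a bijection of $\widehat{M}$, we may assume $\lambda\in i\mathbb{R}$ or $\lambda\in\mathbb{R}_{\ge0}$. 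For $\lambda\in i\mathbb{R}$ the representation is induced from a unitary character of $P$, hence unitary on $L^2(\mathbb{R}^n,V_\sigma)$, and so is each of its irreducible constituents; this produces the unitary principal series, the first family.

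For $\lambda\in\mathbb{R}_{\ge0}$ I would split into the irreducible and the reducible cases. If $I^{cpt}_{\sigma,\lambda}$ is irreducible, Lemma~\ref{lem:ComplementarySeries} says it is unitarizable exactly when $\sigma_m=0$ and $|\lambda|<\rho-a_\sigma$; removing $\lambda=0$ (already covered) leaves precisely the complementary series, the second family, and rules out any further irreducible principal series. If $I^{cpt}_{\sigma,\lambda}$ is reducible, Proposition~\ref{prop:CompositionSeries} exhibits its irreducible constituents as the subrepresentations $\mathfrak{I}(\sigma,i,j)$ --- split as $\mathfrak{I}(\sigma,i,j)^{\pm}$ when $i=\frac{n-1}{2}$ --- together with the quotients $\mathfrak{Q}(\sigma,i,j)$, for the admissible indices $(i,j)$. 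Lemmas~\ref{lem:Unitarity1} and~\ref{lem:Unitarity2} then say that $\mathfrak{I}(\sigma,i,j)$ is unitarizable iff $\sigma_m=0$ and $i=a_\sigma$ (the case $i=0$, unitarizable iff $\sigma$ is trivial, being exactly $a_\sigma=0$), that $\mathfrak{I}(\sigma,\frac{n-1}{2},j)^{\pm}$ is always unitarizable, and that $\mathfrak{Q}(\sigma,i,j)$ is unitarizable iff in addition $j=0$ (the case $\mathfrak{Q}(\sigma,0,0)$ being the trivial representation). Substituting $i=a_\sigma$ into the admissibility bound $0\le j<\sigma_i-|\sigma_{i+1}|$ and using $\sigma_{a_\sigma+1}=0$ --- with the convention $\sigma_0=\infty$ taking care of $i=0$ --- turns it into $0\le j<\sigma_{a_\sigma}$, which is exactly the index set in the third and fourth families.

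I expect the only real work to be bookkeeping, and the main obstacle to be matching the three different case parametrizations in Lemmas~\ref{lem:Unitarity1}--\ref{lem:ComplementarySeries} (written in terms of $i$, $j$, and the position of the first vanishing entry of $\sigma$) against the single uniform parametrization of the theorem --- in particular, checking that the conventions $\sigma_0=\infty$ and $\sigma_m=0$ correctly unify the cases $i=0$ and $i>0$ together with the self-dual restriction, and keeping track of the exceptional middle index $i=\frac{n-1}{2}$, where the $K$-type recursion for the intertwining operator yields a two-piece kernel $\mathfrak{I}(\sigma,\frac{n-1}{2},j)^{\pm}$. Finally I would verify that the four families are pairwise disjoint --- the first two consist of irreducible full principal series, the last two of proper subquotients distinguished by their $K$-spectra (infinite, finite, or paired) --- so that the list has no repetitions, and conclude from the reductions above that it is exhaustive.
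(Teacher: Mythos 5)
Your proposal is correct and follows essentially the same route as the paper's proof: reduce to $\lambda\in i\mathbb{R}$ or $\lambda\in\mathbb{R}_{\ge 0}$, take the unitary principal series directly from the $L^2$-structure, get the complementary series from Lemma~\ref{lem:ComplementarySeries}, and read off the unitarizable subquotients at the reducibility points from Proposition~\ref{prop:CompositionSeries} together with Lemmas~\ref{lem:Unitarity1} and~\ref{lem:Unitarity2}, using duality to cover $\lambda<0$. The only difference is that you spell out the Hermitian-dual/infinitesimal-character argument forcing $\lambda\in i\mathbb{R}\cup\mathbb{R}$, which the paper leaves implicit.
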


\begin{proof}
	It follows from Observation~\ref{eq:BOOobsB} that the unitary principal series $\pi_{\sigma,\lambda}^{cpt}$, $\lambda\in i\mathbb{R}$, is always irreducible, and it is unitary with respect to the $L^2$-inner product on $L^2(K)\otimes V_\sigma$. Together with the complementary series (see Lemma~\ref{lem:ComplementarySeries}), these are all irreducible unitarizable principal series representations. The remaining representations in the unitary dual are to be found among the composition factors at points of reducibility $\lambda\in\mathbb{R}\setminus\{0\}$. By \cite[Remarks 3.2 and 3.3]{BransonOlafsson}, composition factors in the case where $n=2m$ is even and $\sigma_m\neq0$ are never unitary. The unitary ones in the other cases are determined in Lemma~\ref{lem:Unitarity1} and \ref{lem:Unitarity2} for $\lambda>0$, and since $\pi_{\sigma^*,-\lambda}$ is dual to $\pi_{\sigma,\lambda}$ these are all.
\end{proof}

\begin{remark}
  Comparing $K$-types and infinitesimal characters, we find the following equivalences:
  \begin{itemize}
  \item $\mathfrak{I}(\sigma , i, j) \cong \mathfrak{Q}(\sigma +(j+1)e_{i+1}, i+1, \sigma_{i+1}-\lvert \sigma_{i+2} \rvert)$ if $0\leq i < m-1$,
  \item $\mathfrak{I}(\sigma , m-1,j) \cong \begin{cases}I_{\sigma + \mathrm{sgn}(\sigma_m)(j+1)e_m,\sigma_m}^{cpt}&\mbox{if $n=2m$ is even and $\sigma_m\neq0$,}\\I_{\sigma +(j+1)e_m,0}^{cpt} \cong I_{\sigma -(j+1)e_m,0}^{cpt}&\mbox{if $n=2m$ is even and $\sigma_m=0$.}\end{cases}$
  \end{itemize}
\end{remark}

\begin{example}
  For $n=3$ we obtain the following composition structure of $I_{\sigma,\lambda}^{cpt}$ for $\lambda\geq0$ and $\sigma = \sigma_1 \in \mathbb{N}_0$: We only have reducibility if $\lambda \in(\frac{1}{2} + \mathbb{N}_0) \setminus \left\{ \frac{1}{2} + \sigma \right\}$. When $\lambda = \frac{3}{2} + \sigma + j$ for some $j \in \mathbb{N}_0$, then $\mathfrak{I}(\sigma,0,j)$ and $\mathfrak{Q}(\sigma,0,j)$ are irreducible and realized on the $K$-types
  \begin{equation*}
	\left\{ (\alpha_1 , \alpha_2) \in \widehat{K} \mid \alpha_1 > \sigma + j = \lambda - \frac{1}{2}\right\} \quad \text{ resp. } \quad \left\{ (\alpha_1 , \alpha_2) \in \widehat{K} \mid \alpha_1 \leq \lambda - \frac{1}{2} \right\}.
\end{equation*}
If $\lambda = \frac{1}{2} + j$ for some $0 \leq j < \sigma$, then $\mathfrak{I}(\sigma , 1,j)^{\pm}$ and $\mathfrak{Q}(\sigma , 1,j)$ are irreducible and realized on the $K$-types
\begin{equation*}
	\left\{ (\alpha_1 , \alpha_2) \in \widehat{K} \mid \pm \alpha_2 > j = \lambda - \frac{1}{2}\right\} \text{ resp. } \left\{(\alpha_1 , \alpha_2) \in \widehat{K}\mid  |\alpha_2| \leq \lambda - \frac{1}{2} \right\}.
\end{equation*}
\end{example}

\section{The non-compact picture and its Fourier transform}

By restricting functions on $G$ to the open dense Bruhat cell $\overline{N}P\subseteq G$ we obtain a realization of the principal series on a space of functions on $\overline{N}\simeq\mathbb{R}^n$, the so-called \emph{non-compact picture}. Taking the Euclidean Fourier transform on $\mathbb{R}^n$ gives yet another realization and we study the Lie algebra action in this realization.

\subsection{The non-compact picture}

Since $\overline{N}P\subseteq G$ is open and dense and $\overline{N}\cap P=\{1\}$, the restriction map $\tilde{I}_{\sigma,\lambda} \to C^\infty (\overline{N}, V_\sigma ), \ f \mapsto f_{\overline{N}}\coloneqq f|_{\overline{N}}$
is one-to-one. We denote its image by $I_{\sigma,\lambda}$. Letting $\pi_{\sigma , \lambda } (g)f_{\overline{N}} \coloneqq (\tilde{\pi }_{\sigma , \lambda }(g)f)_{\overline{N}}$ we obtain a $G$-representation $\pi_{\sigma,\lambda}$ on $I_{\sigma,\lambda}$. In the following we identify $\overline{N}\cong \mathbb{R}^n $ by $\overline{n}_x \mapsto x$ and abbreviate $f (\overline{n}_x )=f (x)$ for $x \in \mathbb{R}^n $ and $f \in I_{\sigma,\lambda}$.

\begin{lemma}\label{lem:NonCptPictureTempered}
For every $\lambda\in\mathbb{C}$, we have
$$ \mathcal{S}(\mathbb{R}^n)\otimes V_\sigma \subseteq I_{\sigma,\lambda} \subseteq C^\infty_{\mathrm{temp}}(\mathbb{R}^n)\otimes V_\sigma, $$
where $\mathcal{S}(\mathbb{R}^n)$ is the Schwartz space and $C^\infty_{\mathrm{temp}}(\mathbb{R}^n)$ the space of tempered smooth functions on $\mathbb{R}^n$. More precisely, every $f\in I_{\sigma,\lambda}$ grows at most of order $(1+\lVert x \rVert^2)^{-(\lambda+\rho)}$ as $x\to\infty$.
\end{lemma}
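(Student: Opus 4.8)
The plan is to work directly from the defining transformation rule of $\tilde I_{\sigma,\lambda}$ together with the Bruhat decomposition computed in Lemma~\ref{lemma:cnfnljqwca}. The key observation is that a function $f\in I_{\sigma,\lambda}$, viewed on $\overline N\cong\mathbb R^n$, is completely determined by the single value $f(\overline n_0)=f(1)\in V_\sigma$ together with the cocycle coming from moving $\overline n_x$ back into $\overline N P$. Concretely, I would use Lemma~\ref{lemma:cnfnljqwca}: since $w_0\overline n_x=\overline n_y m e^{tH_0}n_y$ with $y=-x/\|x\|^2$, $m=2xx^t/\|x\|^2-I_n$ and $t=2\log\|x\|$, and $w_0$ normalizes $K$ (it lies in $\mathrm O(n+1,1)$ and its action on $\tilde I_{\sigma,\lambda}$ is bounded), the value $f(\overline n_x)$ is expressed through $f$ evaluated near $\overline n_0$, multiplied by the factor $a^{-(\lambda+\rho)}=e^{-t(\lambda+\rho)}=\|x\|^{-2(\lambda+\rho)}$ and by $\sigma(m)^{-1}$, which is bounded since $M$ is compact. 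This immediately yields that $f(\overline n_x)$ decays (or grows) like $\|x\|^{-2(\lambda+\rho)}$ as $\|x\|\to\infty$, which gives the stated order $(1+\|x\|^2)^{-(\lambda+\rho)}$; smoothness on all of $\mathbb R^n$ is clear because $f$ is smooth on the open set $\overline N$.

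To make this precise I would proceed in the following steps. First, record the elementary fact that for $g$ in a fixed compact neighbourhood the Iwasawa $A$-component $a(g)$ is bounded above and below, so that the only source of growth in $\pi_{\sigma,\lambda}$ comes from the $e^{-t(\lambda+\rho)}$ factor with $t\to\pm\infty$. Second, for the inclusion $\mathcal S(\mathbb R^n)\otimes V_\sigma\subseteq I_{\sigma,\lambda}$: given $\phi\in\mathcal S(\mathbb R^n)\otimes V_\sigma$, one must check that the function on $\overline N$ given by $\phi$ extends to a smooth function on all of $G$ transforming correctly under $P$; equivalently, in the compact picture one must check that the corresponding function on $K$ is smooth across the ``boundary'' $K\setminus \overline N K$, i.e. near the point $w_0 M$. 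Using Lemma~\ref{lemma:cnfnljqwca} the compact-picture function at $k(w_0\overline n_x)$ equals $\|x\|^{2(\lambda+\rho)}\sigma(m)\phi(y)$ with $y=-x/\|x\|^2\to 0$ as $x\to\infty$; since $\phi$ is Schwartz, $\phi(y)$ and all its derivatives vanish to infinite order as $y\to 0$ in the relevant sense — more carefully, one rewrites everything in the variable $y$ near $0$ and uses that $\|x\|^{2(\lambda+\rho)}=\|y\|^{-2(\lambda+\rho)}$ is dominated by the rapid decay of $\phi$ and its derivatives at... wait, $\phi$ is Schwartz so it decays at infinity, not at $0$. The correct statement: a Schwartz function $\phi$ on $\mathbb R^n$, pulled back via $x\mapsto -x/\|x\|^2$, together with the polynomial-type weight $\|x\|^{2(\lambda+\rho)}$, extends smoothly across the point at infinity precisely because $\phi$ decays rapidly; this is the standard fact that $\mathcal S(\mathbb R^n)$ sits inside every principal series in the non-compact picture. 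I would cite or reprove this via the one-point compactification $S^n=K/M$.

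Third, for the inclusion $I_{\sigma,\lambda}\subseteq C^\infty_{\mathrm{temp}}(\mathbb R^n)\otimes V_\sigma$ with the stated growth order: given $f\in I_{\sigma,\lambda}$, apply the transformation rule through $w_0$ as above to get $f(\overline n_x)=c\cdot\|x\|^{-2(\lambda+\rho)}\sigma(m)^{-1}\tilde f(k(w_0^{-1}\overline n_{y}))$ for $\|x\|$ large, where $\tilde f\in I^{cpt}_{\sigma,\lambda}$ is bounded on the compact group $K$ together with all its derivatives; since $\sigma$ is unitary on the compact $M$, $\|\sigma(m)^{-1}\|=1$, so $\|f(\overline n_x)\|_\sigma\le C\|x\|^{-2\operatorname{Re}(\lambda+\rho)}\le C'(1+\|x\|^2)^{-\operatorname{Re}(\lambda+\rho)}$, and derivatives are handled the same way by differentiating the (real-analytic) change of variables $x\mapsto y$. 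Hence $f$ is tempered.

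The main obstacle is the second inclusion, i.e. verifying that Schwartz functions genuinely extend to elements of $\tilde I_{\sigma,\lambda}$: one must control the behaviour of $\phi$ and all its derivatives under the inversion $x\mapsto -x/\|x\|^2$ near $x=\infty$ and show the resulting function on $K$ is $C^\infty$ at the excluded $M$-orbit. This is a standard but slightly technical computation in the coordinates of the sphere $S^n=\overline N\sqcup\{\infty\}$; the point is that rapid decay of $\phi$ at infinity beats the singular Jacobian of the inversion, and one gets smoothness including at $\infty$. I expect the cleanest route is to pass to the compact picture and use that the two charts $\overline N K$ and $w_0\overline N K$ cover $K$, checking smoothness of the transition on the overlap via Lemma~\ref{lemma:cnfnljqwca}.
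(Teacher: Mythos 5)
Your overall strategy is sound and close in spirit to the paper's: both inclusions are reduced to the behaviour of the Bruhat/Iwasawa cocycle at infinity, and your final (self-corrected) account of why a Schwartz function extends smoothly across the point at infinity of $S^n=G/P$ is exactly the content the paper compresses into one sentence (it extends $f$ by the equivariance formula on $\overline N MAN$ and by $0$ elsewhere, and invokes the vanishing of $f$ and all derivatives at infinity). The genuine difference is in the second inclusion. You route through $w_0$ and Lemma~\ref{lemma:cnfnljqwca}, writing $\overline n_x=w_0^{-1}\overline n_y m e^{tH_0}n_y$ and evaluating $f$ at $w_0^{-1}\overline n_y$; but $w_0=\mathrm{diag}(-1,\dots,-1,1)$ is only a representative in $\mathrm O(n+1,1)$, and in general $w_0\notin G=\mathrm{SO}_0(n+1,1)$ (likewise $m\in\mathrm O(n)\setminus\mathrm{SO}(n)$), so $f\in\tilde I_{\sigma,\lambda}$ cannot literally be evaluated there. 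This is fixable — replace $w_0$ by a Weyl representative inside $K$, or argue on the double cover of the flag variety — but the paper's argument avoids the issue entirely: it takes the Iwasawa decomposition $\overline n_x=kan$, observes $n_x^{-1}\overline n_x=\theta(n)^{-1}a^2n$, and reads off $a^2=\exp(2\log(1+\|x\|^2)H_0)$ from Lemma~\ref{lemma:cnfn1kagf2} (applied with $y=x$, where the two $\mathrm O(n)$-factors multiply to an element of $\mathrm{SO}(n)$), giving the exact prefactor $(1+\|x\|^2)^{-(\lambda+\rho)}$ times the bounded quantity $f(k)$, $k\in K$. Your version only yields the asymptotic $\|x\|^{-2\operatorname{Re}(\lambda+\rho)}$, which suffices for the stated ``at most of order'' claim, but you should either repair the choice of representative or switch to the Iwasawa argument to make the evaluation legitimate.
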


\begin{proof}
For the first inclusion we extend $f\in\mathcal{S}(\mathbb{R}^n)\otimes V_\sigma$ to $\widetilde{f}\in \tilde{I}_{\sigma,\lambda}$ by
$$ \widetilde{f}(g) \coloneqq \begin{cases}a^{-\lambda-\rho}\sigma(m)^{-1}f(x)&\mbox{if }g=\overline{n}_xman\in\overline{N}MAN,\\0&\mbox{else.}\end{cases} $$
Since $f$ vanishes at infinity together with all its derivatives, this defines a smooth function $\widetilde{f}$ on $G$ which clearly has the right equivariance properties. This shows the first inclusion. For the second inclusion we first decompose $\overline{n}_x=kan\in KAN$ in terms of the Iwasawa decomposition. Then
$$ n_x^{-1}\overline{n}_x = \theta(\overline{n}_x)^{-1}\overline{n}_x = \theta(n)^{-1}\theta(a)^{-1}\theta(k)^{-1}kan = \theta(n)^{-1}a^2n $$
since $\theta(k)=k$, $\theta(a)=a^{-1}$ and $\theta(\overline{n}_x)=n_x$. By Lemma~\ref{lemma:cnfn1kagf2} we find that $a^2=\exp tH_0$ with $t=2\log(1+\|x\|^2)$, so that
$$ f_{\overline{N}}(x) = f(\overline{n}_x) = f(kan) = a^{-\lambda-\rho}f(k) = (1+\|x\|^2)^{-(\lambda+\rho)}f(k) \qquad (f\in\tilde{I}_{\sigma,\lambda}). $$
Since $K$ is compact, $f(K)$ is bounded and the claim follows.
\end{proof}

The action of $\overline{N}, M$ and $A$ with respect to $\pi_{\sigma , \lambda }$ is given as follows:
\begin{align*}
  \pi_{\sigma , \lambda }(\overline{n}_y )f (x) &= f (x-y) && y\in\mathbb{R}^n,\\
  \pi_{\sigma , \lambda } (m)f (x) &= \sigma (m)f (m^{-1} x) && m\in M\simeq\mathrm{SO}(n),\\
  \pi_{\sigma , \lambda }(e^{t H_0 })f (x) &= e^{(\lambda + \rho )t}f (e^t x) && t\in\mathbb{R}.
\end{align*}
Using Lemma \ref{lemma:cnfn1kagf2} we also obtain a description of the $N$-action, which gives us the following expressions for the derived representation $d \pi_{\sigma , \lambda }$ of $\pi_{\sigma , \lambda }$:
\begin{alignat}{2}
  \label{eq:NpictureActionNbar}d \pi_{\sigma , \lambda }(\overline{N}_j )f (x)&=-\partial_j f (x),\hfill &&\quad j=1,\ldots , n\\
  \label{eq:cnfpvp1t7g}d \pi_{\sigma , \lambda }(T)f (x)&=d \sigma (T)f (x)-D_{Tx}f (x),\hfill &&\quad T \in \mathfrak{m}\cong \mathfrak{so}(n)\\
  \label{eq:NpictureActionA}d \pi_{\sigma , \lambda }(H_0 )f (x)&= (E+\lambda + \rho )f (x),\hfill &&\\
  \label{align:cnfn2gqhhw}d \pi_{\sigma , \lambda } (N_j )f (x) &= \lVert x \rVert^2 \partial_j f(x)-2 x_j (E + \lambda + \rho )f (x)+2 d \sigma (x e_j^t - e_j x^t )f (x),\hfill &&\quad j=1,\ldots , n
\end{alignat}
where $D_a $ denotes the directional derivative in the direction of $a \in \mathbb{R}^n $ and $E \coloneqq \sum_{j=1}^{n}x_j \partial_j $ denotes the Euler operator on $\mathbb{R}^n $.

\subsection{The F-picture}

By Lemma~\ref{lem:NonCptPictureTempered}, we have $I_{\sigma,\lambda}\subseteq C^\infty_{\mathrm{temp}}(\mathbb{R}^n)\otimes V_\sigma\subseteq\mathcal{S}'(\mathbb{R}^n)\otimes V_\sigma$, so the Euclidean Fourier transform is defined on $I_{\sigma,\lambda}$. We normalize the Fourier transform by
\begin{align*}
	\mathcal{F}(f)(\xi ) \coloneqq  \frac{1}{(2 \pi )^{\frac{n}{2}}} \int_{\mathbb{R}^n}e^{- i \langle x, \xi  \rangle}f (x) \,\mathrm{d}x.
\end{align*}
For $\lambda \in \mathfrak{a}_{\mathbb{C} }^{*} $ and $\sigma \in \widehat{M}$ we define a representation $\widehat{\pi}_{\sigma , \lambda }$ of $G$ on $\mathcal{F}(I_{\sigma,\lambda})$ by
\begin{align*}
\widehat{\pi}_{\sigma , \lambda }(g) \circ \mathcal{F} \coloneqq \mathcal{F} \circ \pi_{\sigma , \lambda }(g), \quad g \in G.
\end{align*}
Again by Lemma~\ref{lem:NonCptPictureTempered} we have the inclusions
\begin{equation}
	\mathcal{S}(\mathbb{R}^n)\otimes V_\sigma \subseteq \mathcal{F}(I_{\sigma,\lambda}) \subseteq \mathcal{S}'(\mathbb{R}^n)\otimes V_\sigma.\label{eq:FPictureSchwartz}
\end{equation}

By the intertwining properties of the Fourier transform, we obtain the following formulas for the action of $\overline{P}$:
\begin{align}
	\widehat{\pi}_{\sigma,\lambda}(\overline{n}_x)f(\xi) &= e^{-i\langle x,\xi\rangle}f(\xi) && (x\in\mathbb{R}^n)\label{eq:ActionFpictureNbar},\\
	\widehat{\pi}_{\sigma,\lambda}(m)f(\xi) &= \sigma(m)f(m^{-1}\xi) && (m\in M),\label{eq:ActionFpictureM}\\
	\widehat{\pi}_{\sigma,\lambda}(e^{tH_0})f(\xi) &= e^{(\lambda-\rho)t}f(e^{-t}\xi) && (t\in\mathbb{R}).\label{eq:ActionFpictureA}
\end{align}
This implies that the derived representation $d\widehat{\pi}_{\sigma,\lambda}$ of $\widehat{\pi}_{\sigma,\lambda}$ is on $\overline{\mathfrak{n}}$, $\mathfrak{m}$ and $\mathfrak{a}$ given by
\begin{alignat}{2}
  d\widehat{\pi}_{\sigma , \lambda }(\overline{N}_j )f (\xi )&=-i \xi_j f (\xi ),&& \qquad j=1,\ldots , n\label{eq:FpictureActionNbar}\\
  d\widehat{\pi}_{\sigma , \lambda }(T)f (\xi )&= d \sigma (T)f (\xi )- D_{T \xi }f (\xi ),&& \qquad T \in \mathfrak{m} \cong \mathfrak{so}(n)\label{eq:FpictureActionM}\\
  d \widehat{\pi}_{\sigma , \lambda }(H_0 )f (\xi )&= - (E-\lambda +\rho )f (\xi ).\label{eq:FpictureActionA}
\end{alignat}
The $\mathfrak{n}$-action is given by the following lemma.

\begin{lemma}\label{lem:FpictureActionN}
  For each $j \in \{1, \ldots , n\}$ we have
  \begin{align*}
d \widehat{\pi}_{\sigma , \lambda }(N_j )f (\xi )= -i \Big(\xi_j \Delta -2 (E-\lambda +\rho )\partial_j -2d \sigma (\partial e_j^t - e_j \partial^t )\Big)f (\xi ),
\end{align*}
where $\partial \coloneqq \sum_{k=1}^{n}\partial_k e_k $.
\end{lemma}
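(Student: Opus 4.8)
The statement is a routine transport of the non-compact picture formula \eqref{align:cnfn2gqhhw} through the Euclidean Fourier transform. The plan is to start from the identity $d\widehat{\pi}_{\sigma,\lambda}(N_j) = \mathcal{F}\circ d\pi_{\sigma,\lambda}(N_j)\circ\mathcal{F}^{-1}$ and translate each of the three summands of \eqref{align:cnfn2gqhhw} separately using the standard rules: multiplication by $x_k$ corresponds to $i\partial_k$ on the Fourier side, differentiation $\partial_k$ corresponds to multiplication by $i\xi_k$, and the Euler operator $E=\sum_k x_k\partial_k$ transforms to $-(E+n)$ since $\mathcal{F}\circ x_k\partial_k\circ\mathcal{F}^{-1} = (i\partial_k)(i\xi_k) = -\partial_k\xi_k = -(\xi_k\partial_k+1)$.

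First I would handle the term $-2x_j(E+\lambda+\rho)f$. Writing $\lVert x\rVert^2\partial_j$ is the delicate one: under $\mathcal{F}$ we get $(-\Delta_\xi)(i\xi_j) = -i\xi_j\Delta_\xi$ up to the commutator $[\Delta,\xi_j] = 2\partial_j$, so this term contributes $-i\xi_j\Delta - (\text{lower order})$; I would carefully bookkeep the commutator terms here since they combine with the Euler-operator contribution. Next, $-2x_j(E+\lambda+\rho)f$ becomes, after applying the transforms above, a term of the form $-i\cdot(-2)\partial_j\bigl(-(E+n)+\lambda+\rho\bigr) = 2i\partial_j(E+n-\lambda-\rho)$, which again needs to be normal-ordered against the $\xi_j\Delta$ piece. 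Finally, the $d\sigma$-term $2d\sigma(xe_j^t-e_jx^t)f$ is the easiest: $d\sigma$ is a constant-coefficient (matrix) operator commuting with $\mathcal{F}$, and each factor of $x_k$ simply becomes $i\partial_k$, giving $2d\sigma(i\partial e_j^t - i e_j\partial^t) = 2i\,d\sigma(\partial e_j^t - e_j\partial^t)$ with $\partial = \sum_k\partial_k e_k$, matching the claimed shape up to the overall $-i$.

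The main obstacle—such as it is—is purely the commutator bookkeeping: the non-commuting pieces $\xi_j$, $\partial_j$, $E$, and $\Delta$ must be reassembled so that all the spurious constant and first-order terms coming from $[\xi_j,\partial_k]$, $[\Delta,\xi_j]$, and $[E,\partial_j]$ cancel exactly, leaving the clean expression $-i(\xi_j\Delta - 2(E-\lambda+\rho)\partial_j - 2d\sigma(\partial e_j^t-e_j\partial^t))$. I would organize this by first rewriting \eqref{align:cnfn2gqhhw} in a fixed operator ordering, then applying $\mathcal{F}\circ(-)\circ\mathcal{F}^{-1}$ termwise, and finally collecting coefficients of $\xi_j\Delta$, of $\partial_j$ (with the $E$, $\lambda$, $\rho$ and constant contributions), and of the $d\sigma$-part, verifying that the constants match. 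Alternatively, and perhaps more cleanly, one can avoid explicit Fourier computation: since both $d\pi_{\sigma,\lambda}$ and the claimed $d\widehat{\pi}_{\sigma,\lambda}$ must define Lie algebra representations compatible with the already-established formulas \eqref{eq:FpictureActionNbar}--\eqref{eq:FpictureActionA}, one could check that the proposed operator for $N_j$ satisfies the correct bracket relations $[H_0,N_j]=N_j$, $[T,N_j]=\ldots$, $[\overline{N}_k,N_j]=\ldots$ together with the right "leading symbol," which pins it down uniquely; but the direct termwise transform is shorter and I would go with that.
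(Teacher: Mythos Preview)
Your proposal is correct and matches the paper's proof essentially term for term: the paper also starts from \eqref{align:cnfn2gqhhw}, applies the rules $\partial_j\circ\mathcal{F}=\mathcal{F}\circ(-ix_j)$ and $\xi_j\circ\mathcal{F}=\mathcal{F}\circ(-i\partial_j)$ termwise, and then normal-orders using $[\Delta,\xi_j]=2\partial_j$, $\sum_k\partial_k\xi_k=E+n$, and $\partial_jE=E\partial_j+\partial_j$ to arrive at the claimed expression. Your alternative bracket-relation approach would also work but is, as you say, longer.
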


\begin{proof}
  By Equation \eqref{align:cnfn2gqhhw} and the properties of the Fourier transform
  \begin{equation*}
  	\partial_j \circ \mathcal{F} = \mathcal{F} \circ (-i x_j ) \qquad \mbox{and} \qquad \xi_j \circ \mathcal{F} = \mathcal{F} \circ (-i \partial_j ),
  \end{equation*}
  we have
  \begin{align*}
    d \widehat{\pi}_{\sigma , \lambda }(N_j )f (\xi)&=-i \Delta  \xi _j f (\xi )-2i\partial_j (\lambda + \rho - \sum_{k=1}^{n}\partial_k \xi _k )f (\xi )+2i d \sigma ( \partial e_j^t - e_j  \partial^t ) f (\xi )\\
    &=-i \xi _j \Delta f (\xi )-2i \partial_j f (\xi )-2i \partial_j (\lambda + \rho )f (\xi )+2i \partial_j (E+n)f (\xi )+2i d \sigma (\partial e_j^t - e_j \partial^t ) f (\xi )\\
    &=-i \xi _j \Delta f (\xi )-2i \partial_j f (\xi )-2i \partial_j (\lambda - \rho )f (\xi )+2i (E \partial_j + \partial_j )f (\xi )+2i d \sigma (\partial e_j^t - e_j \partial^t )f (\xi ).\qedhere
\end{align*}
\end{proof}

In what follows we abbreviate
\begin{align*}
\mathcal{B}_{\lambda ,j}^{\sigma }\coloneqq \xi_j \Delta -2 (E-\lambda + \rho )\partial_j - 2 d \sigma (\partial e_j^t - e_j \partial^t ).
\end{align*}

\subsection{Decomposing the \texorpdfstring{$N$}{N}-action}

In this section we investigate the $N$-action in the F-picture in more detail. For this, we first restrict functions to $\mathbb{R}^n\setminus\{0\}$.

\begin{lemma}\label{lem:KernelRestriction}
The kernel of the restriction map $\mathcal{F}(I_{\sigma,\lambda})\to\mathcal{D}'(\mathbb{R}^n\setminus\{0\})\otimes V_\sigma$ is the largest finite-dimensional subrepresentation of $\mathcal{F}(I_{\sigma,\lambda})$.
\end{lemma}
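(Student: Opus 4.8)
The plan is to transfer the statement to the non-compact picture and to identify the kernel with the subspace of polynomial functions contained in $I_{\sigma,\lambda}$. A tempered distribution lies in the kernel of the restriction map $\mathcal{S}'(\mathbb{R}^n)\otimes V_\sigma\to\mathcal{D}'(\mathbb{R}^n\setminus\{0\})\otimes V_\sigma$ precisely when it is supported at the origin, hence (by the classical structure theorem for distributions supported at a point) is a finite linear combination $\sum_\alpha\partial^\alpha\delta\otimes v_\alpha$; its inverse Fourier transform is then a $V_\sigma$-valued polynomial, and conversely every such polynomial transforms back to a distribution supported at $0$. Therefore
\[ \ker\bigl(\mathcal{F}(I_{\sigma,\lambda})\to\mathcal{D}'(\mathbb{R}^n\setminus\{0\})\otimes V_\sigma\bigr)=\mathcal{F}(P_{\sigma,\lambda}),\qquad P_{\sigma,\lambda}:=I_{\sigma,\lambda}\cap\bigl(\mathbb{C}[x_1,\ldots,x_n]\otimes V_\sigma\bigr), \]
and since $\mathcal{F}$ is a $G$-isomorphism onto its image it suffices to show that $P_{\sigma,\lambda}$ is the largest finite-dimensional subrepresentation of $(\pi_{\sigma,\lambda},I_{\sigma,\lambda})$.

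First I would check that $P_{\sigma,\lambda}$ is a finite-dimensional subrepresentation. Each of the operators in \eqref{eq:NpictureActionNbar}, \eqref{eq:cnfpvp1t7g}, \eqref{eq:NpictureActionA} and \eqref{align:cnfn2gqhhw} is a differential operator with polynomial coefficients (the $d\sigma$-terms contributing only $x$-linear endomorphism-valued coefficients), so it maps $\mathbb{C}[x_1,\ldots,x_n]\otimes V_\sigma$ into itself; as these operators span $d\pi_{\sigma,\lambda}(\mathfrak{g})$ and the $\mathfrak{g}$-action preserves $I_{\sigma,\lambda}$, it preserves the intersection $P_{\sigma,\lambda}$. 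By Lemma~\ref{lem:NonCptPictureTempered} every $f\in I_{\sigma,\lambda}$ is $O\bigl((1+\|x\|^2)^{-\mathrm{Re}(\lambda+\rho)}\bigr)$ at infinity, so a nonzero polynomial in $P_{\sigma,\lambda}$ has degree at most $-2\,\mathrm{Re}(\lambda+\rho)$; in particular $\dim P_{\sigma,\lambda}<\infty$. A finite-dimensional $\mathfrak{g}$-invariant subspace of a representation of the connected group $G$ is automatically $G$-invariant, so $P_{\sigma,\lambda}$ is indeed a subrepresentation.

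It remains to prove maximality. Let $W\subseteq I_{\sigma,\lambda}$ be any finite-dimensional subrepresentation. From $\pi_{\sigma,\lambda}(\overline{n}_y)f(x)=f(x-y)$ the space $W$ is invariant under all translations, hence under the commuting operators $\partial_1,\ldots,\partial_n$; decomposing $W$ into their generalized joint eigenspaces $W_\mu$ (with $\mu$ ranging over a finite set $S\subseteq\mathbb{C}^n$) shows that $W_\mu$ consists of functions $p(x)e^{\langle\mu,x\rangle}$ with $p\in\mathbb{C}[x_1,\ldots,x_n]\otimes V_\sigma$. Now $\pi_{\sigma,\lambda}(e^{tH_0})$ sends $f(x)$ to $e^{(\lambda+\rho)t}f(e^tx)$ and therefore maps $W_\mu$ into $W_{e^t\mu}$; since $W$ is $G$-stable this forces $e^t\mu\in S$ for all $t\in\mathbb{R}$, and finiteness of $S$ then forces $\mu=0$. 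Hence $W=W_0\subseteq\mathbb{C}[x_1,\ldots,x_n]\otimes V_\sigma$, i.e. $W\subseteq P_{\sigma,\lambda}$; applying $\mathcal{F}$ gives the lemma. The only non-formal point is this last maximality argument, where the nontrivial exponential factors must be excluded, and combining the translation action of $\overline{N}$ with the dilation action of $A$ does exactly this; a routine check of the borderline degree bound in the finiteness step (e.g. the case $\lambda+\rho=0$ with $\sigma$ trivial, where constants appear) affects neither finiteness nor the argument.
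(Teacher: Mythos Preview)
Your proof is correct and follows the same overall architecture as the paper's: identify the kernel with the polynomial subspace $P_{\sigma,\lambda}$ of $I_{\sigma,\lambda}$, observe that the Lie algebra formulas \eqref{eq:NpictureActionNbar}--\eqref{align:cnfn2gqhhw} preserve polynomials, and invoke the growth bound of Lemma~\ref{lem:NonCptPictureTempered} to get finite-dimensionality.

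The one place where you diverge from the paper is the converse (maximality) step. The paper argues that on any finite-dimensional subrepresentation $V$ the operators $d\pi_{\sigma,\lambda}(\overline{N}_j)=-\partial_j$ act nilpotently (since $\overline{\mathfrak{n}}$ is nilpotent and $G$ is semisimple, nilpotent Lie algebra elements act nilpotently in finite-dimensional representations), so $V$ consists of polynomials. You instead decompose $W$ into generalized joint eigenspaces of the commuting $\partial_j$'s and use the dilation action of $A$ to force all the joint eigenvalues to vanish. Your route is more elementary and self-contained---it avoids appealing to the general fact about nilpotent elements in finite-dimensional representations---at the cost of a few extra lines. The paper additionally remarks, via Proposition~\ref{prop:CompositionSeries}, that there is at most one non-trivial finite-dimensional subrepresentation; this is not needed for the lemma as stated, and your argument that every finite-dimensional $W$ sits inside $P_{\sigma,\lambda}$ already establishes that $P_{\sigma,\lambda}$ is the largest such.
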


\begin{proof}
We consider the composition of the restriction with the Fourier transform $I_{\sigma,\lambda}\to\mathcal{F}(I_{\sigma,\lambda})\to\mathcal{D}'(\mathbb{R}^n\setminus\{0\})\otimes V_\sigma$. The kernel of this map consists of all polynomials in $I_{\sigma,\lambda}$. If $f\in I_{\sigma,\lambda}$ is a polynomial, then the subrepresentation $d\pi_{\sigma,\lambda}(\mathcal{U}(\mathfrak{g}))f$ of $I_{\sigma,\lambda}$ generated by $f$ is contained in the space of polynomials by \eqref{eq:NpictureActionNbar}--\eqref{align:cnfn2gqhhw}. But by Lemma~\ref{lem:NonCptPictureTempered} the degree of every such polynomial is at most $-2(\lambda+\rho)$, so this subrepresentation is finite-dimensional. If conversely $V\subseteq I_{\sigma,\lambda}$ is a finite-dimensional subrepresentation, then the nilpotency of $\overline{\mathfrak{n}}$ implies that $d\pi_{\sigma,\lambda}(\mathfrak{n})^kV=\{0\}$ for sufficiently large $k$. But $d\pi_{\sigma,\lambda}(\overline{N}_j)=-\partial_j$ by \eqref{eq:NpictureActionNbar}, so $V$ consists of polynomials. Finally, by Proposition~\ref{prop:CompositionSeries} there is at most one non-trivial finite-dimensional subrepresentation of $I_{\sigma,\lambda}$.
\end{proof}

In view of the previous statement, we denote by $\widehat{I}_{\sigma,\lambda}\subseteq\mathcal{D}'(\mathbb{R}^n\setminus\{0\})\otimes V_\sigma$ the image of $\mathcal{F}(I_{\sigma,\lambda})$ under the restriction map $\mathcal{D}'(\mathbb{R}^n)\otimes V_\sigma\to\mathcal{D}'(\mathbb{R}^n\setminus\{0\})\otimes V_\sigma$. By \eqref{eq:FpictureActionNbar}, \eqref{eq:FpictureActionM}, \eqref{eq:FpictureActionA} and Lemma~\ref{lem:FpictureActionN}, the Lie algebra action $d\widehat{\pi}_{\sigma,\lambda}$ is by differential operators with polynomial coefficients, so it restricts to a Lie algebra action on $\widehat{I}_{\sigma,\lambda}$ which we denote by the same expression.

Now let $f\in\widehat{I}_{\sigma,\lambda}$. If we denote the stabilizer in $M$ of a point $\xi \in \mathbb{R}^n\setminus\{0\}$ by $M_\xi \cong \mathrm{SO}(n-1)$, then the restriction of $V_\sigma$ to $M_\xi$ decomposes into a multiplicity-free sum of irreducible representations. We would like to decompose $f(\xi)$ according to this decomposition. Since $M_\xi$ and therefore also the decomposition of $V_\sigma$ depends on $\xi$, we fix $e_1\in\mathbb{R}^n$ as a basepoint and decompose
\begin{equation}
	V_\sigma = \bigoplus_{\substack{\tau\in\widehat{M}_{e_1}\\\tau\preceq\sigma}} W_\tau.\label{eq:DecompositionSigmaIntoTau}
\end{equation}
For each $\xi \in \mathbb{R}^n\setminus\{0\}$ there exists some $m_\xi \in  M$, unique up to right multiplication by $M_{e_1}$ such that $  \lVert \xi  \rVert m_\xi e_1 = \xi $, so that $M_\xi = m_\xi M_{e_1 }m_{\xi }^{-1} $. Then the decomposition of $\sigma|_{M_\xi}$ into irreducibles can be written as
\begin{equation}
	V_\sigma = \bigoplus_{\substack{\tau\in\widehat{M}_{e_1}\\\tau\preceq\sigma}} W_\tau(\xi), \qquad \mbox{with }W_\tau(\xi)=\sigma(m_\xi)W_\tau.\label{eq:DecompositionSigmaMxi}
\end{equation}
Accordingly, we decompose
\begin{equation}
	\widehat{I}_{\sigma,\lambda} = \bigoplus_{\substack{\tau\in\widehat{M}_{e_1}\\\tau\preceq\sigma}} \widehat{I}_{\sigma,\lambda}(\tau),\label{eq:DecompositionIhat}
\end{equation}
where $\widehat{I}_{\sigma,\lambda}(\tau)$ consists of all $f\in\widehat{I}_{\sigma,\lambda}$ such that
\begin{align*}
\forall \xi \in \mathbb{R}^n\setminus\{0\} \colon \quad f (\xi ) \in W_\tau(\xi).
\end{align*}

Note that each subspace $\widehat{I}_{\sigma,\lambda}(\tau)$ is $\overline{P}$-invariant. This is clear for the action of $\overline{N}$ and $A$, and for each $m_0 \in M$:
\begin{align*}
\widehat{\pi}_{\sigma,\lambda} (m_0 )f (\xi )=\sigma (m_0 )f (m_0^{-1} \xi ) \in \sigma (m_0 ) \sigma (m_0^{-1} )\sigma (m_\xi )W_\tau = \sigma (m_\xi )W_\tau.
\end{align*}
In particular, we obtain:

\begin{lemma}\label{lemma:cnfpw3m9l9}
  For each $\tau \in \widehat{M}_{e_1 }$ the following operators leave the space $\widehat{I}_{\sigma,\lambda}(\tau)$ invariant:
  \begin{align*}
    d \widehat{\pi}_{\sigma, \lambda} (X_{ab})&=d \sigma (X_{ab})+\xi _a \partial_b - \xi _b \partial_a, \\
    \sum_{a,d=1}^{n}\xi_d d \widehat{\pi}_{\sigma, \lambda} (X_{aj})d \widehat{\pi}_{\sigma, \lambda} (X_{ad})&=\sum_{a=1}^{n}d \sigma (X_{aj})d \sigma (e_a \xi^t - \xi e_a^t )+d \sigma (\xi e_j^t - e_j \xi^t )E-\lVert \xi  \rVert^2 d \sigma (\partial e_j^t - e_j \partial^t )\\
                                                    & \qquad -\xi_j d \sigma (\partial  \xi^t - \xi \partial^t ) + \sum_{a,d=1}^{n}\xi_d (\xi_a \partial_j - \xi_j \partial_a )(\xi_a \partial_d - \xi_d \partial_a ),\\
    \sum_{a,b,d=1}^{n}\xi_b \xi_d d \widehat{\pi}_{\sigma, \lambda} (X_{ab})d \widehat{\pi}_{\sigma, \lambda} (X_{ad}) &=\sum_{a=1}^{n}d \sigma (e_a \xi ^t - \xi  e_a^t )^2 -2\lVert \xi  \rVert^2 d \sigma (\partial  \xi ^t - \xi  \partial ^t )\\
    & \qquad + \sum_{a,b,d=1}^{n}\xi _b \xi _d (\xi _a \partial_b - \xi _b \partial_a )(\xi _a \partial_d -\xi _d \partial_a ),
\end{align*}
where $X_{ab} \coloneqq e_a e_b^t - e_b e_a^t=E_{a,b}-E_{b,a} \in \mathfrak{m}$ and where we write $d\sigma(\partial\xi^t-\xi\partial^t)=\sum_{i,j=1}^nd\sigma(X_{ij})\xi_j\partial_i$.
\end{lemma}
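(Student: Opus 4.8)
The plan is to prove the three invariance statements by exhibiting each of the listed operators as an element of the image of $\mathcal{U}(\mathfrak{m}_\xi')$ under $d\widehat{\pi}_{\sigma,\lambda}$, where $\mathfrak{m}_\xi'$ is an appropriate copy of $\mathfrak{so}(n-1)$; since each space $\widehat{I}_{\sigma,\lambda}(\tau)$ is already known to be $\overline{P}$-invariant and in particular invariant under $d\widehat{\pi}_{\sigma,\lambda}(\mathfrak{m})$, the claim will follow once the identifications are made. Concretely, I would first record that $d\widehat{\pi}_{\sigma,\lambda}(X_{ab})f(\xi) = d\sigma(X_{ab})f(\xi) - D_{X_{ab}\xi}f(\xi)$ by \eqref{eq:FpictureActionM}, and that $X_{ab}\xi = \xi_b e_a - \xi_a e_b$, so that $D_{X_{ab}\xi} = \xi_b\partial_a - \xi_a\partial_b$; comparing signs with the statement (which writes $\xi_a\partial_b - \xi_b\partial_a = -D_{X_{ab}\xi} = D_{X_{ba}\xi}$, i.e. the operator for $X_{ba}$) this is just \eqref{eq:FpictureActionM} itself, so the first line is immediate.

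For the second and third operators the idea is that the stabilizer $\mathfrak{m}_{e_1}$ of $e_1$ is spanned by $\{X_{ab}: 2\le a<b\le n\}$, and more generally $\mathfrak{m}_\xi$ is spanned by $\{\sum_a \xi_a X_{ab} \cdot(\text{suitable combinations})\}$ — precisely, $T\in\mathfrak{m}$ lies in $\mathfrak{m}_\xi$ iff $T\xi=0$. The elements $\sum_{a=1}^n \xi_a X_{aj}$ do not themselves annihilate $\xi$, but the combinations appearing in the lemma do: one checks that $\sum_{a,d}\xi_d X_{aj}X_{ad}$ and $\sum_{a,b,d}\xi_b\xi_d X_{ab}X_{ad}$, as elements of $\mathcal{U}(\mathfrak{m})$, map (under the adjoint-type action relevant to evaluating on the fiber) into $\mathcal{U}(\mathfrak{m}_\xi)$ after accounting for the $\xi$-dependence — equivalently, that $\sum_a \xi_d X_{ad}$ applied to something, contracted appropriately, lands in the stabilizer. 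The cleanest route is: apply $d\widehat{\pi}_{\sigma,\lambda}$ to the abstract $\mathcal{U}(\mathfrak{m})$-elements $Y_j := \sum_{a,d}\xi_d X_{aj}X_{ad}$ and $Z := \sum_{a,b,d}\xi_b\xi_d X_{ab}X_{ad}$ using \eqref{eq:FpictureActionM} repeatedly, and expand. Since $d\widehat{\pi}_{\sigma,\lambda}(X_{ab}) = d\sigma(X_{ab}) + (\xi_a\partial_b - \xi_b\partial_a)$ is a sum of a zeroth-order term and a vector field tangent to each sphere $\|\xi\|=\text{const}$, the product expands into: a pure $d\sigma$–$d\sigma$ part, mixed terms where one $d\sigma$ meets one vector field (these produce the $E$, the $\|\xi\|^2 d\sigma(\partial e_j^t - e_j\partial^t)$ and the $\xi_j d\sigma(\partial\xi^t - \xi\partial^t)$ terms after using $\sum_a \xi_a\partial_a = E$ and $\sum_a \xi_a e_a = \xi$), and a pure vector-field part (the last double/triple sum in each formula). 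This is the computational heart of the lemma.

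So the key steps, in order, are: (1) rewrite \eqref{eq:FpictureActionM} in the $X_{ab}$ notation and dispatch the first identity; (2) substitute $d\widehat{\pi}_{\sigma,\lambda}(X_{ab}) = d\sigma(X_{ab}) + (\xi_a\partial_b - \xi_b\partial_a)$ into $Y_j$ and $Z$ and expand, being careful that the two factors do not commute so that commutator terms $[\,d\sigma(X_{aj}), \xi_a\partial_d - \xi_d\partial_a\,]$ etc. must be tracked — but these commutators vanish since $d\sigma(X)$ acts only on $V_\sigma$ while the vector fields act only on the $\xi$-variable, so in fact the two families genuinely commute and only the ordering within each family matters; (3) collect the mixed $d\sigma$–vector-field terms using $\sum_a\xi_a e_a = \xi$, $\sum_a\xi_a\partial_a = E$, and the Leibniz-type identities $\sum_a \partial_a(\xi_a g) = (E+n)g$ if needed; (4) read off that the surviving operator is the one claimed, and that the $\overline{P}$-invariance of $\widehat{I}_{\sigma,\lambda}(\tau)$ (already established before the lemma, together with invariance under $d\widehat{\pi}_{\sigma,\lambda}(\mathfrak{m})$ which contains all $X_{ab}$) gives invariance under $Y_j$ and $Z$, hence under the right-hand sides. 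The main obstacle I anticipate is purely bookkeeping: keeping the index gymnastics in $Y_j$ and $Z$ straight and correctly separating the part that is manifestly in $d\widehat{\pi}_{\sigma,\lambda}(\mathcal{U}(\mathfrak{m}))$ from the explicit differential-operator remainder, especially since the stated formulas mix $d\sigma(X_{ij})\xi_j\partial_i$-type terms with scalar vector fields; there is no conceptual difficulty, only the risk of sign or contraction errors, which I would guard against by checking the identities on the special point $\xi = e_1$ where $\mathfrak{m}_{e_1}$ is the standard $\mathfrak{so}(n-1)$ and everything collapses.
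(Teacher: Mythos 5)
Your proposal is correct and follows essentially the same route as the paper: the first identity is just \eqref{eq:FpictureActionM}, and the other two are obtained by expanding $d\widehat{\pi}_{\sigma,\lambda}(X_{ab})d\widehat{\pi}_{\sigma,\lambda}(X_{cd})$ into a $d\sigma$--$d\sigma$ part, mixed parts, and a pure vector-field part, tracking the commutators that arise when the vector fields are moved past the coefficients $\xi_d$, with invariance following because the left-hand sides are built from $d\widehat{\pi}_{\sigma,\lambda}(\mathfrak{m})$ and multiplication by coordinates (i.e.\ the $\overline{\mathfrak{n}}$-action), both of which preserve $\widehat{I}_{\sigma,\lambda}(\tau)$. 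The only slightly off note is the opening framing via $\mathcal{U}(\mathfrak{m}_\xi)$ — the operators need not lie there, and you correctly abandon that in favour of the direct expansion.
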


\begin{proof}
  The first line is a direct consequence of Equation \eqref{eq:cnfpvp1t7g}. The other ones follow from
  \begin{align*}
    d \widehat{\pi}_{\sigma, \lambda} (X_{ab})d \widehat{\pi}_{\sigma, \lambda} (X_{c d})&=d \sigma (X_{ab})d \sigma (X_{c d})+d \sigma (X_{ab})(\xi _c \partial_d -\xi _d \partial_c )+(\xi _a \partial_b - \xi _b \partial_a )d \sigma (X_{c d})\\
    &\qquad +(\xi _a \partial_b - \xi_b \partial_a )(\xi _c \partial_d - \xi _d \partial_c )
\end{align*}
so that $\sum_{a,d=1}^{n}\xi_d d \widehat{\pi}_{\sigma, \lambda} (X_{aj})d \widehat{\pi}_{\sigma, \lambda} (X_{ad})$ equals
\begin{align*}
  &\hphantom{{}={}}\sum_{a=1}^{n}d \sigma  (X_{aj})d \sigma (e_a \xi^t - \xi e_a^t )+\sum_{a=1}^{n}d \sigma (X_{aj}) (\xi_a E - \lVert \xi  \rVert^2 \partial_a )\\
                                                    & \qquad +\sum_{a,d=1}^{n} \xi_d (\xi_a \partial_j - \xi_j \partial_a )d \sigma (X_{ad})+\sum_{a, d=1}^{n}\xi_d (\xi_a \partial_j - \xi_j \partial_a )(\xi_a \partial_d - \xi_d \partial_a )\\
                                                    &=\sum_{a=1}^{n}d \sigma (X_{aj})d \sigma (e_a \xi^t - \xi e_a^t )+d \sigma (\xi e_j^t - e_j \xi^t )E-\lVert \xi  \rVert^2 d \sigma (\partial e_j^t - e_j \partial^t )\\
                                                    & \qquad +\sum_{a,d=1}^{n}((\xi_a \partial_j - \xi_j \partial_a )\xi_d + \xi_j \delta_{a d}- \xi_a \delta_{jd})d \sigma (X_{ad})+\sum_{a,d=1}^{n}\xi_d (\xi_a \partial_j - \xi_j \partial_a )(\xi_a \partial_d - \xi_d \partial_a )\\
                                                    &=\sum_{a=1}^{n}d \sigma (X_{aj})d \sigma (e_a \xi^t - \xi e_a^t )+d \sigma (\xi e_j^t - e_j \xi^t )E-\lVert \xi  \rVert^2 d \sigma (\partial e_j^t - e_j \partial^t )-d \sigma (\xi e_j^t - e_j \xi^t )\\
  & \qquad +\sum_{a=1}^{n}(\xi_a \partial_j - \xi_j \partial_a ) d \sigma (e_a \xi^t - \xi e_a^t )+\sum_{a,d=1}^{n}\xi_d (\xi_a \partial_j - \xi_j \partial_a )(\xi_a \partial_d - \xi_d \partial_a )\\
                                                    &=\sum_{a=1}^{n}d \sigma (X_{aj})d \sigma (e_a \xi^t - \xi e_a^t )+d \sigma (\xi e_j^t - e_j \xi^t )E-\lVert \xi  \rVert^2 d \sigma (\partial e_j^t - e_j \partial^t )-d \sigma (\xi e_j^t - e_j \xi^t )\\
                                                    & \qquad -\xi_j d \sigma (\partial  \xi^t - \xi \partial^t ) + d \sigma (\xi e_j^t - e_j \xi^t )+\sum_{a,d=1}^{n}\xi_d (\xi_a \partial_j - \xi_j \partial_a )(\xi_a \partial_d - \xi_d \partial_a ).\qedhere
\end{align*}
\end{proof}

We now investigate some Casimir operators. For this we first define the normalized Killing form
\begin{align*}
\tilde{B}(X, Y)\coloneqq - \frac{1}{2} \mathrm{tr}(XY)
\end{align*}
on $\mathfrak{g}$. Note that with respect to this form each orthonormal basis $v_1 , \ldots , v_{n-1} \in \mathbb{R}^n $ of $e_1^\perp\subseteq\mathbb{R}^n$ gives rise to an orthonormal basis $e_1  v_j^t - v_j e_1 ^t $ of $\mathfrak{m}_{e_1  }^\perp $. Indeed, $(e_1  v_j^t - v_j e_1 ^t )(e_1  v_k^t - v_k e_1 ^t )=-\delta_{jk}e_1  e_1 ^t - v_j v_k^t $ so that $\tilde{B}(e_1 v_j^t - v_j e_1^t , e_1 v_k^t - v_k e_1^t )=\delta_{jk}$. Moreover, $\tilde{B}(e_1 v_j^t - v_j e_1^t, X) = 0$ for each $X \in \mathfrak{m}_{e_1 }=1 \times \mathfrak{so}(n-1)$. Similarly, we obtain an orthonormal basis at any point $\xi \in \mathbb{R}^n $ by conjugating with $m_\xi $, i.e., $\frac{1}{\lVert \xi  \rVert}(\xi \eta^t - \eta \xi^t)$ is an orthonormal basis of $\mathfrak{m}_{\xi }^\perp $ whenever $\eta $ runs through an orthonormal basis of $\xi^\perp$. In particular, the Casimir operator of $\mathfrak{m}_\xi^\perp $ with respect to $\sigma $ is given by
\begin{align*}
\mathrm{Cas}(\mathfrak{m}_\xi^\perp )=\frac{1}{\lVert \xi \rVert^2 }\sum_{i=1}^{n}d \sigma (e_i \xi^t - \xi e_i^t )^2 ,
\end{align*}
where we used that we may let $\eta $ from above run through an arbitrary basis of $\mathbb{R}^n $ since $\xi \xi^t - \xi \xi^t =0$. In the following we will show that
\begin{align*}
[\partial_j , \mathrm{Cas}(\mathfrak{m}_{\xi }^\perp )]+\mathcal{B}_{0,j}^\sigma \colon \quad  \widehat{I}_{\sigma,\lambda}(\tau) \to \widehat{I}_{\sigma,\lambda}(\tau).
\end{align*}
We first calculate the commutator.
\begin{lemma}
  We have
  \begin{align*}
[\partial_j , \mathrm{Cas}(\mathfrak{m}_{\xi }^\perp )]=\frac{-2 \xi_j }{\lVert \xi  \rVert^4 }\sum_{a=1}^{n}d \sigma (e_a \xi^t - \xi e_a^t )^2 +\frac{n-2}{\lVert \xi  \rVert^2 } \sum_{\ell =1}^{n}\xi_{\ell} d \sigma (X_{j \ell })+\frac{2}{\lVert \xi  \rVert^2 } \sum_{a=1}^{n}d \sigma (X_{a j})d \sigma (e_a \xi^t - \xi e_a^t ).
\end{align*}
\end{lemma}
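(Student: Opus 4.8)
The plan is a direct computation, organized around the observation that each summand $d\sigma(e_i\xi^t-\xi e_i^t)$ depends on $\xi$ only through the linear coefficients $\xi_k$. Writing $X_{ab}=e_ae_b^t-e_be_a^t$ as in Lemma~\ref{lemma:cnfpw3m9l9}, one has
\[
d\sigma(e_i\xi^t-\xi e_i^t)=\sum_{k=1}^n\xi_k\,d\sigma(X_{ik}),
\]
so that $\mathrm{Cas}(\mathfrak{m}_{\xi}^\perp)=\lVert\xi\rVert^{-2}\sum_{i=1}^n L_i^2$ where $L_i\coloneqq d\sigma(e_i\xi^t-\xi e_i^t)$ is a multiplication operator with matrix entries linear in $\xi$. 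First I would apply the Leibniz rule for the commutator with $\partial_j$: since $\lVert\xi\rVert^{-2}$ is a scalar multiplication operator, $[\partial_j,\lVert\xi\rVert^{-2}]$ is multiplication by $\partial_j\lVert\xi\rVert^{-2}=-2\xi_j\lVert\xi\rVert^{-4}$, which at once produces the first asserted term $-2\xi_j\lVert\xi\rVert^{-4}\sum_a d\sigma(e_a\xi^t-\xi e_a^t)^2$. It then remains to compute $\lVert\xi\rVert^{-2}[\partial_j,\sum_i L_i^2]$.

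Again by Leibniz, $[\partial_j,L_i^2]=[\partial_j,L_i]L_i+L_i[\partial_j,L_i]$, and $[\partial_j,L_i]$ is multiplication by $\partial_jL_i=\sum_k\delta_{jk}\,d\sigma(X_{ik})=d\sigma(X_{ij})$, so $[\partial_j,L_i^2]=d\sigma(X_{ij})L_i+L_i\,d\sigma(X_{ij})$. The key step is to symmetrize this by commuting the constant matrix $d\sigma(X_{ij})$ past $L_i$. For this I would use the structure constants of $\mathfrak{m}\cong\mathfrak{so}(n)$, namely $[X_{ab},X_{cd}]=\delta_{bc}X_{ad}-\delta_{bd}X_{ac}-\delta_{ac}X_{bd}+\delta_{ad}X_{bc}$, which (using $X_{ii}=0$ and $\delta_{ii}=1$, no summation) specializes to $[X_{ik},X_{ij}]=\delta_{ik}X_{ij}-X_{kj}+\delta_{ij}X_{ki}$. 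Hence $[L_i,d\sigma(X_{ij})]=\sum_k\xi_k\,d\sigma([X_{ik},X_{ij}])=\xi_i\,d\sigma(X_{ij})+L_j-\delta_{ij}L_i$, where I use $\sum_k\xi_k\,d\sigma(X_{kj})=-L_j$ and $\sum_k\xi_k\,d\sigma(X_{ki})=-L_i$ (antisymmetry $X_{kj}=-X_{jk}$). Summing over $i$, together with $\sum_i\xi_i\,d\sigma(X_{ij})=-L_j$ and $\sum_i\delta_{ij}L_i=L_j$, gives $\sum_i L_i\,d\sigma(X_{ij})=\sum_i d\sigma(X_{ij})L_i+(n-2)L_j$, and therefore $[\partial_j,\sum_i L_i^2]=2\sum_i d\sigma(X_{ij})L_i+(n-2)L_j$. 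Dividing by $\lVert\xi\rVert^2$, re-expanding $L_i=d\sigma(e_i\xi^t-\xi e_i^t)$ and $L_j=\sum_\ell\xi_\ell\,d\sigma(X_{j\ell})$, and adding the first term yields precisely the stated formula.

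I expect the only genuine obstacle to be bookkeeping: getting the signs in the $\mathfrak{so}(n)$ structure constants right, keeping careful track of which $L$'s are summed over versus free, and in particular seeing that the $n$ identical copies of the $X_{kj}$-term collapse to the coefficient $n-2$ after the two cancellations coming from the $\delta_{ik}$- and $\delta_{ij}$-contributions. No analytic subtleties enter, since by the discussion preceding Lemma~\ref{lemma:cnfpw3m9l9} everything in sight is a differential operator with polynomial coefficients acting on $\mathcal{D}'(\mathbb{R}^n\setminus\{0\})\otimes V_\sigma$, so all manipulations are purely algebraic.
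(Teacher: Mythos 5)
Your proposal is correct and follows essentially the same route as the paper: the Leibniz rule applied to $\lVert\xi\rVert^{-2}\sum_a L_a^2$ produces the first term and the symmetric combination $\sum_a\bigl(d\sigma(X_{aj})L_a+L_a\,d\sigma(X_{aj})\bigr)$, which is then reordered via the $\mathfrak{so}(n)$ commutation relations to collect the coefficient $n-2$. All intermediate identities you state (in particular $[L_i,d\sigma(X_{ij})]=\xi_i\,d\sigma(X_{ij})+L_j-\delta_{ij}L_i$ and its sum over $i$) check out against the paper's computation.
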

\begin{proof}
  We directly obtain that
  \begin{align*}
    [\partial_j , \mathrm{Cas}(\mathfrak{m}_{\xi }^\perp )]=\frac{-2 \xi_j }{\lVert \xi  \rVert^4 }\sum_{a=1}^{n}d \sigma (e_a \xi^t - \xi e_a^t )^2 +\frac{1}{\lVert \xi  \rVert^2 }\sum_{a=1}^{n}\Big(d \sigma (X_{aj})d \sigma (e_a \xi^t - \xi e_a^t )+d \sigma (e_a \xi^t - \xi e_a^t )d \sigma (X_{aj})\Big).
\end{align*}
Now note that $[ e_a \xi^t - \xi e_a^t , X_{aj}] = \sum_{\ell =1}^{n}\xi_{\ell} [X_{a \ell },X_{aj}]=\sum_{\ell =1}^{n}\xi_{\ell} ( \delta_{a \ell }X_{aj}+\delta_{aj}X_{\ell a}+X_{j \ell })$ so that
\begin{align*}
  &\hphantom{{}={}}\sum_{a=1}^{n}\Big(d \sigma (X_{aj})d \sigma (e_a \xi^t - \xi e_a^t )+d \sigma (e_a \xi^t - \xi e_a^t )d \sigma (X_{aj})\Big)\\
  &= \sum_{a=1}^{n}\Big(2d \sigma (X_{aj})d \sigma (e_a \xi^t - \xi e_a^t )+\xi_a d \sigma (X_{a j})+\delta_{aj}\sum_{\ell =1}^{n}\xi_{\ell} d \sigma (X_{\ell a})+\sum_{\ell =1}^{n}\xi_{\ell}d \sigma (X_{j \ell })\Big)\\
  &= 2\sum_{a=1}^{n}d \sigma (X_{a j})d \sigma (e_a \xi^t - \xi e_a^t )+\sum_{a=1}^{n}\xi_a d \sigma (X_{a j})+\sum_{\ell =1}^{n}\xi_{\ell} d \sigma (X_{\ell j})+n \sum_{\ell =1}^{n} \xi_{\ell} d \sigma (X_{j \ell })\\
  &=2\sum_{a=1}^{n}d \sigma (X_{a j})d \sigma (e_a \xi^t - \xi e_a^t )+(n-2)\sum_{\ell =1}^{n}\xi_{\ell} d \sigma (X_{j \ell }).\qedhere
\end{align*}
\end{proof}
Before we use Lemma \ref{lemma:cnfpw3m9l9} to rewrite this commutator we first calculate some differential operators occurring in that lemma. By straightforward calculations we obtain the following:
\begin{lemma}\label{lemma:cnfqcxk32j}
  We have
  \begin{align*}
    \sum_{a,d=1}^{n}\xi _d (\xi _a \partial_j -\xi _j \partial_a )(\xi _a \partial_d - \xi _d \partial_a )&=\xi_j \lVert \xi  \rVert^2 \Delta - \xi _j E^2-(n-2)\xi_j E,\\
    \sum_{a,b,d=1}^{n}\xi _b \xi _d (\xi _a \partial_b - \xi _b \partial_a )(\xi _a \partial_d -\xi _d \partial_a )&=\lVert \xi  \rVert^4 \Delta - \lVert \xi \rVert^2 E^2 - (n-2)\lVert \xi  \rVert^2 E.
\end{align*}
\end{lemma}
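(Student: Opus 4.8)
The plan is to verify the first identity by a direct expansion and then to obtain the second identity from it by a single specialization. The only tools needed are the canonical commutation relations $[\partial_a,\xi_b]=\delta_{ab}$ and their immediate consequences for the Euler operator $E=\sum_a\xi_a\partial_a$, namely $\partial_j\xi_a=\xi_a\partial_j+\delta_{ja}$, $\partial_a\xi_a=\xi_a\partial_a+1$ and $E\partial_a=\partial_a E-\partial_a$, together with $\sum_a\xi_a^2=\lVert\xi\rVert^2$, $\sum_a\partial_a^2=\Delta$ and $\sum_a\xi_a\partial_a=E$.

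For the first identity I would expand $(\xi_a\partial_j-\xi_j\partial_a)(\xi_a\partial_d-\xi_d\partial_a)$ into the four summands $\xi_a\partial_j\xi_a\partial_d$, $-\xi_a\partial_j\xi_d\partial_a$, $-\xi_j\partial_a\xi_a\partial_d$ and $\xi_j\partial_a\xi_d\partial_a$, multiply each by $\xi_d$, sum over $a$ and $d$, and bring every summand into normal-ordered form (all $\xi$'s to the left of all $\partial$'s). The first two summands both reduce to $\pm\bigl(\lVert\xi\rVert^2 E\partial_j+\xi_j E\bigr)$ and hence cancel one another. The third summand, using $\partial_a\xi_a=\xi_a\partial_a+1$ and $\sum_d\xi_d E\partial_d=E^2-E$, contributes $-\xi_j E^2+\xi_j E-n\xi_j E$, where the term $-n\xi_j E$ comes from the piece $-\xi_j\partial_d$, which carries no dependence on the running index $a$ and is therefore summed $n$ times. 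The fourth summand contributes $\xi_j\lVert\xi\rVert^2\Delta+\xi_j E$. Adding the four contributions yields precisely $\xi_j\lVert\xi\rVert^2\Delta-\xi_j E^2-(n-2)\xi_j E$.

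To deduce the second identity I would multiply the first identity on the left by $\xi_j$ and sum over $j\in\{1,\dots,n\}$. Since $\xi_j$ is a multiplication operator it commutes with the coefficient $\xi_d$, and no derivative needs to be moved past it; hence on the left-hand side $\xi_j\cdot\xi_d\cdot(\xi_a\partial_j-\xi_j\partial_a)(\xi_a\partial_d-\xi_d\partial_a)$ is already of the form $\xi_b\xi_d(\xi_a\partial_b-\xi_b\partial_a)(\xi_a\partial_d-\xi_d\partial_a)$ after renaming $j$ to $b$, so summing over $j$ recovers exactly the left-hand side of the second identity with no further normal-ordering. On the right-hand side, $\sum_j\xi_j^2=\lVert\xi\rVert^2$ pulls out the factor $\lVert\xi\rVert^2$, giving $\lVert\xi\rVert^4\Delta-\lVert\xi\rVert^2 E^2-(n-2)\lVert\xi\rVert^2 E$.

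There is no conceptual difficulty here; the \emph{main obstacle is purely the operator bookkeeping} — distinguishing free from summed indices (the $n$ in the coefficient $-(n-2)$ arises from a sum that is constant in the running index) and respecting the non-commutativity of multiplication operators with derivatives during normal-ordering. As an independent cross-check of the second identity one may rewrite $\sum_{a,b,d}\xi_b\xi_d L_{ab}L_{ad}$, with $L_{ab}:=\xi_a\partial_b-\xi_b\partial_a$, as $\sum_a M_a^2$ where $M_a:=\sum_b\xi_b L_{ab}=\xi_a E-\lVert\xi\rVert^2\partial_a$: commuting each $\xi_d$ past $L_{ab}$ via $[\xi_d,L_{ab}]=\xi_b\delta_{ad}-\xi_a\delta_{bd}$ produces correction terms that vanish because $\sum_a\xi_a M_a=0$ and $L_{aa}=0$, and expanding $\bigl(\xi_a E-\lVert\xi\rVert^2\partial_a\bigr)^2$ by the same normal-ordering rules reproduces the same answer.
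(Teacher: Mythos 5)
Your computation is correct, and since the paper offers no proof beyond the remark ``by straightforward calculations,'' your direct normal-ordering expansion of the first identity followed by left-multiplication by $\xi_j$ and summation over $j$ is exactly the intended route; the bookkeeping (cancellation of the first two summands, the factor $n$ from the $a$-independent piece, and the final coefficient $-(n-2)$) all checks out. The cross-check via $\sum_a M_a^2$ with $M_a=\xi_a E-\lVert\xi\rVert^2\partial_a$ is also valid and reproduces the same answer.
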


We can now prove the aforementioned invariance result.
\begin{proposition}\label{proposition:cnfqtl00hg}
The operator $[\partial_j , \mathrm{Cas}(\mathfrak{m}_\xi^\perp )]+\mathcal{B}_{0,j}^\sigma $ leaves each $\widehat{I}_{\sigma,\lambda}(\tau)$ invariant.
\end{proposition}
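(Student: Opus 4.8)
The key observation is that $\widehat{I}_{\sigma,\lambda}(\tau)$ is, by its very definition, the $\overline{P}$-submodule on which the fibres of the equivariant decomposition \eqref{eq:DecompositionSigmaMxi} live, and by Lemma~\ref{lemma:cnfpw3m9l9} this space is preserved by all three families of operators appearing there: the rotations $d\widehat{\pi}_{\sigma,\lambda}(X_{ab})$, the combination $\sum_{a,d}\xi_d\, d\widehat{\pi}_{\sigma,\lambda}(X_{aj})d\widehat{\pi}_{\sigma,\lambda}(X_{ad})$, and $\sum_{a,b,d}\xi_b\xi_d\, d\widehat{\pi}_{\sigma,\lambda}(X_{ab})d\widehat{\pi}_{\sigma,\lambda}(X_{ad})$. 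The plan is therefore to express the operator $[\partial_j,\mathrm{Cas}(\mathfrak{m}_\xi^\perp)]+\mathcal{B}_{0,j}^\sigma$ as an $\mathbb{R}[\xi,\|\xi\|^{-2}]$-linear combination of these three $\widehat{I}_{\sigma,\lambda}(\tau)$-preserving operators (together possibly with scalar differential operators in $\xi$ alone, which trivially preserve each $\widehat{I}_{\sigma,\lambda}(\tau)$ since they act fibrewise-trivially on $V_\sigma$). Since $\mathrm{Cas}(\mathfrak{m}_\xi^\perp)=\|\xi\|^{-2}\sum_i d\sigma(e_i\xi^t-\xi e_i^t)^2$ commutes with the $M_\xi$-action and hence preserves each $\widehat{I}_{\sigma,\lambda}(\tau)$ on its own, the real content is to reorganize the commutator term and $\mathcal{B}_{0,j}^\sigma$ so that the $d\sigma$-dependence is packaged into the allowed building blocks.

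\textbf{Key steps.} First I would take the formula for $[\partial_j,\mathrm{Cas}(\mathfrak{m}_\xi^\perp)]$ from the preceding lemma, which has three terms: $-2\xi_j\|\xi\|^{-4}\sum_a d\sigma(e_a\xi^t-\xi e_a^t)^2$, $(n-2)\|\xi\|^{-2}\sum_\ell \xi_\ell\, d\sigma(X_{j\ell})$, and $2\|\xi\|^{-2}\sum_a d\sigma(X_{aj})d\sigma(e_a\xi^t-\xi e_a^t)$. The first of these is $-2\xi_j\|\xi\|^{-2}\mathrm{Cas}(\mathfrak{m}_\xi^\perp)$, already good. For the third term I would invoke the middle identity of Lemma~\ref{lemma:cnfpw3m9l9}, which expresses $\sum_a d\sigma(X_{aj})d\sigma(e_a\xi^t-\xi e_a^t)$ in terms of $\sum_{a,d}\xi_d\, d\widehat{\pi}_{\sigma,\lambda}(X_{aj})d\widehat{\pi}_{\sigma,\lambda}(X_{ad})$ minus the explicit correction terms $d\sigma(\xi e_j^t-e_j\xi^t)E-\|\xi\|^2 d\sigma(\partial e_j^t-e_j\partial^t)-\xi_j d\sigma(\partial\xi^t-\xi\partial^t)+\sum_{a,d}\xi_d(\xi_a\partial_j-\xi_j\partial_a)(\xi_a\partial_d-\xi_d\partial_a)$; here the scalar differential-operator piece is simplified using Lemma~\ref{lemma:cnfqcxk32j}. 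Now one sees that $\|\xi\|^2 d\sigma(\partial e_j^t-e_j\partial^t)$ — which up to the pure $\xi$-differential part $\xi_j\Delta-2(E-\lambda+\rho)\partial_j$ is exactly $\|\xi\|^2\cdot(\text{the }d\sigma\text{-part of }\mathcal{B}_{0,j}^\sigma)$ — cancels against the corresponding term from $\mathcal{B}_{0,j}^\sigma$ when we add the two expressions. The remaining $d\sigma$-terms — namely the $(n-2)\sum_\ell\xi_\ell d\sigma(X_{j\ell})$ term, the $d\sigma(\xi e_j^t-e_j\xi^t)E$ term, and the $\xi_j d\sigma(\partial\xi^t-\xi\partial^t)$ term — must then themselves be recognized as $\widehat{I}_{\sigma,\lambda}(\tau)$-preserving; here I would use again the first line of Lemma~\ref{lemma:cnfpw3m9l9}, $d\widehat{\pi}_{\sigma,\lambda}(X_{ab})=d\sigma(X_{ab})+\xi_a\partial_b-\xi_b\partial_a$, to trade each bare $d\sigma(X_{ab})$ for $d\widehat{\pi}_{\sigma,\lambda}(X_{ab})$ plus a scalar differential operator. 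Assembling all of this and collecting the purely-scalar differential operators in $\xi$ (which act diagonally across the decomposition and so cause no trouble) yields the claim.

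\textbf{Main obstacle.} The delicate point is the bookkeeping: the operator $[\partial_j,\mathrm{Cas}(\mathfrak{m}_\xi^\perp)]+\mathcal{B}_{0,j}^\sigma$ was \emph{constructed} precisely so that the "bad" second-order $d\sigma$-terms cancel, but verifying that every surviving $d\sigma$-term can be written using only the three operators of Lemma~\ref{lemma:cnfpw3m9l9} (plus scalars) requires care — in particular the term $d\sigma(\xi e_j^t-e_j\xi^t)E$, which mixes a single $d\sigma(X_{ab})$ with the Euler operator, has to be handled by writing $d\sigma(\xi e_j^t-e_j\xi^t)=\sum_a\xi_a d\sigma(X_{aj})$ and then recognizing $\sum_a\xi_a d\widehat\pi_{\sigma,\lambda}(X_{aj})$ as legitimate (it is $\|\xi\|$ times a rotation generator at $\xi$), absorbing the difference $\sum_a\xi_a(\xi_a\partial_j-\xi_j\partial_a)=\|\xi\|^2\partial_j-\xi_j E$ into the scalar part. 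I expect no conceptual difficulty beyond this, since all identities needed are either already proved in the excerpt or are elementary manipulations of the operators $X_{ab}$, $E$, $\Delta$, and $\partial_j$; the proof will mostly consist of substituting the three lemmas above and checking that the $d\sigma$-quadratic and $d\sigma$-linear pieces land in the span of the allowed operators.
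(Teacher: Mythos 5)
Your overall strategy --- rewrite $[\partial_j,\mathrm{Cas}(\mathfrak{m}_\xi^\perp)]+\mathcal{B}_{0,j}^\sigma$ using the second identity of Lemma~\ref{lemma:cnfpw3m9l9} together with Lemma~\ref{lemma:cnfqcxk32j}, so that the quadratic $d\sigma$-terms are packaged into the invariance-preserving operators and $\|\xi\|^2 d\sigma(\partial e_j^t-e_j\partial^t)$ cancels against the corresponding term of $\mathcal{B}_{0,j}^\sigma$ --- is indeed the computation the paper performs. But your argument rests on a false premise: you assert that scalar differential operators in $\xi$ ``trivially preserve each $\widehat{I}_{\sigma,\lambda}(\tau)$ since they act fibrewise-trivially on $V_\sigma$.'' This is not true. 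The subspace $W_\tau(\xi)=\sigma(m_\xi)W_\tau$ varies with $\xi$, so differentiating destroys the pointwise condition $f(\xi)\in W_\tau(\xi)$; this is exactly the content of Lemma~\ref{lemma:cnfql9m3l9}, which shows that $\partial_v$ maps $\widehat{I}_{\sigma,\lambda}(\tau)$ into $\widehat{I}_{\sigma,\lambda}(\tau)\oplus\bigoplus_i\bigl(\widehat{I}_{\sigma,\lambda}(\tau+e_i)\oplus\widehat{I}_{\sigma,\lambda}(\tau-e_i)\bigr)$ and generically hits the neighbouring components. Only multiplication operators and the Euler operator $E$ (which comes from the $\mathfrak{a}$-action) are safe. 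If your premise were correct, one could discard the entire scalar part $\xi_j\Delta-2(E+\rho)\partial_j$ of $\mathcal{B}_{0,j}^\sigma$ from the outset and the proposition would be nearly content-free.

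Concretely, your recipe breaks on precisely the leftover terms you list. Substituting $d\sigma(X_{ab})=d\widehat{\pi}_{\sigma,\lambda}(X_{ab})-(\xi_a\partial_b-\xi_b\partial_a)$ into $(n-2)\|\xi\|^{-2}\sum_\ell\xi_\ell d\sigma(X_{j\ell})$ leaves the residue $\|\xi\|^2\partial_j-\xi_jE$, whose $\partial_j$-part is not invariance-preserving; the same happens for $d\sigma(\xi e_j^t-e_j\xi^t)E$; and for $\xi_jd\sigma(\partial\xi^t-\xi\partial^t)$ the substitution produces $\sum_{i,\ell}d\widehat{\pi}_{\sigma,\lambda}(X_{i\ell})\xi_\ell\partial_i$, a preserving operator precomposed with a non-preserving one, which is not in the span of the operators of Lemma~\ref{lemma:cnfpw3m9l9} at all. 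In the paper's proof these residues are not discarded: after all substitutions they assemble into the single term $\frac{\xi_j}{\|\xi\|^2}\sum_\ell\xi_\ell\mathcal{B}_{0,\ell}^\sigma$, and its invariance requires a genuinely new ingredient --- it is identified, modulo $\mathcal{U}(\mathfrak{m}\oplus\mathfrak{a})$-terms and the Euler operator, with the action of $\sum_j\overline{N}_jN_j$ and hence with the Casimir element of $\mathfrak{g}$, which manifestly preserves every $\mathfrak{g}$-stable structure. This final step is absent from your proposal, and without it (or some substitute) the proof does not close.
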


\begin{proof}
  Using Lemma \ref{lemma:cnfpw3m9l9} and \ref{lemma:cnfqcxk32j} we find that $\frac{-2 \xi_j }{\lVert \xi  \rVert^4 }\sum_{a=1}^{n}d \sigma (e_a \xi^t - \xi e_a^t )^2 +\frac{2}{\lVert \xi  \rVert^2 } \sum_{a=1}^{n}d \sigma (X_{a j})d \sigma (e_a \xi^t - \xi e_a^t )$ equals
  \begin{align*}
    &\hphantom{{}={}}\frac{2}{\lVert \xi  \rVert^2 }\biggl(-\frac{\xi_j }{\lVert \xi  \rVert^2 }\sum_{a,b,d=1}^{n}\xi_b \xi_d d \widehat{\pi}_{\sigma, \lambda} (X_{ab})d \widehat{\pi}_{\sigma, \lambda} (X_{ad})-2 \xi_j d \sigma (\partial \xi^t - \xi \partial^t )+\xi_j (\lVert \xi  \rVert^2 \Delta - E^2 - (n-2)E)\\
    &\qquad\qquad +\sum_{a,d=1}^{n}\xi_d d \widehat{\pi}_{\sigma, \lambda} (X_{aj})d \widehat{\pi}_{\sigma, \lambda} (X_{ad})-d \sigma (\xi e_j^t -e_j \xi^t )E+\lVert \xi  \rVert^2 d \sigma (\partial e_j^t - e_j \partial^t )+ \xi_j d \sigma (\partial \xi^t - \xi \partial^t )\\
    &\qquad\qquad -\Bigl(\xi_j \lVert \xi  \rVert^2 \Delta - \xi_j E^2 - (n-2)\xi_j E\Bigr)\biggr)\\
    &=\frac{2}{\lVert \xi  \rVert^2 }\biggl(-\frac{\xi_j }{\lVert \xi  \rVert^2 }\sum_{a,b,d=1}^{n}\xi_b \xi_d d \widehat{\pi}_{\sigma, \lambda} (X_{ab})d \widehat{\pi}_{\sigma, \lambda} (X_{ad})+\sum_{a,d=1}^{n}\xi_d d \widehat{\pi}_{\sigma, \lambda} (X_{aj})d \widehat{\pi}_{\sigma, \lambda} (X_{ad})\\
    &\qquad\qquad -\xi_j d \sigma (\partial \xi^t - \xi \partial^t )-d \sigma (\xi e_j^t -e_j \xi^t )E+\lVert \xi  \rVert^2 d \sigma (\partial e_j^t - e_j \partial^t )\biggr).
  \end{align*}
  Thus, using Lemma \ref{lemma:cnfpw3m9l9} again, we see that $[\partial_j , \mathrm{Cas}(\mathfrak{m}_{\xi }^\perp )]+\mathcal{B}_{0,j}^\sigma $ is given by
  \begin{align*}
    &\hphantom{{}={}}\frac{2}{\lVert \xi  \rVert^2 }\biggl(-\frac{\xi_j }{\lVert \xi  \rVert^2 }\sum_{a,b,d=1}^{n}\xi_b \xi_d d \widehat{\pi}_{\sigma, \lambda} (X_{ab})d \widehat{\pi}_{\sigma, \lambda} (X_{ad})+\sum_{a,d=1}^{n}\xi_d d \widehat{\pi}_{\sigma, \lambda} (X_{aj})d \widehat{\pi}_{\sigma, \lambda} (X_{ad})\\
    &\qquad\qquad -\xi_j d \sigma (\partial \xi^t - \xi \partial^t )-d \sigma (\xi e_j^t -e_j \xi^t )E+\lVert \xi  \rVert^2 d \sigma (\partial e_j^t - e_j \partial^t )\biggr)\\
    &\qquad +\frac{n-2}{\lVert \xi  \rVert^2 }\Bigl(d \widehat{\pi}_{\sigma, \lambda} (e_j \xi^t - \xi e_j^t )- \xi_j E+ \lVert \xi  \rVert^2 \partial_j \Bigr)+ \xi_j \Delta -2 (E + \rho )\partial_j - 2 d \sigma (\partial e_j^t - e_j \partial^t )\\
    &=\frac{2}{\lVert \xi  \rVert^2 }\biggl(-\frac{\xi_j }{\lVert \xi  \rVert^2 }\sum_{a,b,d=1}^{n}\xi_b \xi_d d \widehat{\pi}_{\sigma, \lambda} (X_{ab})d \widehat{\pi}_{\sigma, \lambda} (X_{ad})+\sum_{a,d=1}^{n}\xi_d d \widehat{\pi}_{\sigma, \lambda} (X_{aj})d \widehat{\pi}_{\sigma, \lambda} (X_{ad})-\frac{n-2}{2}\xi_j E\biggr)\\
    &\qquad -\frac{2\xi_j}{\lVert \xi  \rVert^2 } d \sigma (\partial \xi^t - \xi \partial^t )-\frac{2}{\lVert \xi  \rVert^2 }d \sigma (\xi e_j^t -e_j \xi^t )E+\frac{n-2}{\lVert \xi  \rVert^2 }d \widehat{\pi}_{\sigma, \lambda} (e_j \xi^t - \xi e_j^t )+ \xi_j \Delta -2 E\partial_j-2\partial_j \\
    &=\frac{2}{\lVert \xi  \rVert^2 }\biggl(-\frac{\xi_j }{\lVert \xi  \rVert^2 }\sum_{a,b,d=1}^{n}\xi_b \xi_d d \widehat{\pi}_{\sigma, \lambda} (X_{ab})d \widehat{\pi}_{\sigma, \lambda} (X_{ad})+\sum_{a,d=1}^{n}\xi_d d \widehat{\pi}_{\sigma, \lambda} (X_{aj})d \widehat{\pi}_{\sigma, \lambda} (X_{ad})-\frac{n-2}{2}\xi_j E\biggr)\\
    &\qquad -\frac{2\xi_j}{\lVert \xi  \rVert^2 } d \sigma (\partial \xi^t - \xi \partial^t )-\frac{2}{\lVert \xi  \rVert^2 }\Bigl(d \widehat{\pi}_{\sigma, \lambda} (\xi e_j^t -e_j \xi^t )+\xi_j E- \lVert \xi  \rVert^2 \partial_j \Bigr)E+\frac{n-2}{\lVert \xi  \rVert^2 }d \widehat{\pi}_{\sigma, \lambda} (e_j \xi^t - \xi e_j^t )\\
    &\qquad +\xi_j \Delta -2 E\partial_j-2\partial_j \\
    &=\frac{2}{\lVert \xi  \rVert^2 }\biggl(-\frac{\xi_j }{\lVert \xi  \rVert^2 }\sum_{a,b,d=1}^{n}\xi_b \xi_d d \widehat{\pi}_{\sigma, \lambda} (X_{ab})d \widehat{\pi}_{\sigma, \lambda} (X_{ad})+\sum_{a,d=1}^{n}\xi_d d \widehat{\pi}_{\sigma, \lambda} (X_{aj})d \widehat{\pi}_{\sigma, \lambda} (X_{ad})-\frac{n-2}{2}\xi_j E\biggr)\\
    &\qquad -\frac{2\xi_j}{\lVert \xi  \rVert^2 } d \sigma (\partial \xi^t - \xi \partial^t )-\frac{2\xi_j}{\lVert \xi  \rVert^2 } E^2+\frac{n}{\lVert \xi  \rVert^2 }d \widehat{\pi}_{\sigma, \lambda} (e_j \xi^t - \xi e_j^t )+ \xi_j \Delta \\
    &=\frac{2}{\lVert \xi  \rVert^2 }\biggl(-\frac{\xi_j }{\lVert \xi  \rVert^2 }\sum_{a,b,d=1}^{n}\xi_b \xi_d d \widehat{\pi}_{\sigma, \lambda} (X_{ab})d \widehat{\pi}_{\sigma, \lambda} (X_{ad})+\sum_{a,d=1}^{n}\xi_d d \widehat{\pi}_{\sigma, \lambda} (X_{aj})d \widehat{\pi}_{\sigma, \lambda} (X_{ad})+\frac{n}{2}d \widehat{\pi}_{\sigma, \lambda} (e_j \xi^t - \xi e_j^t )\biggr)\\
    &\qquad +\frac{\xi_j}{\lVert \xi  \rVert^2 } \sum_{\ell =1}^{n}\xi_{\ell} \mathcal{B}_{0,\ell }^\sigma.
  \end{align*}
  The proposition now follows from the next lemma.
\end{proof}

\begin{lemma}
The operator $\sum_{\ell =1}^{n}\xi_{\ell} \mathcal{B}_{0,\ell }^\sigma $ leaves each $\widehat{I}_{\sigma,\lambda}(\tau)$ invariant.
\end{lemma}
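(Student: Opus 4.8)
The plan is to rewrite $\sum_{\ell=1}^{n}\xi_{\ell}\mathcal{B}_{0,\ell}^{\sigma}$ as a linear combination of operators that each manifestly preserve every $\widehat{I}_{\sigma,\lambda}(\tau)$. Three such building blocks will appear: the Euler operator $E$ (hence also $E^{2}$); the Casimir $\mathrm{Cas}(\mathfrak{m}_{\xi}^{\perp})$; and $\lVert\xi\rVert^{-2}\sum_{a,b,d}\xi_{b}\xi_{d}\,d\widehat{\pi}_{\sigma,\lambda}(X_{ab})d\widehat{\pi}_{\sigma,\lambda}(X_{ad})$. Their admissibility is immediate: $E=-d\widehat{\pi}_{\sigma,\lambda}(H_{0})+\lambda-\rho$ by \eqref{eq:FpictureActionA} and $A\subseteq\overline{P}$ preserves each $\widehat{I}_{\sigma,\lambda}(\tau)$; for fixed $\xi\neq0$ we have $\mathrm{Cas}(\mathfrak{m}_{\xi}^{\perp})=\mathrm{Cas}(\mathfrak{m})-\mathrm{Cas}(\mathfrak{m}_{\xi})$ on $V_{\sigma}$ because the chosen orthonormal basis of $\mathfrak{m}$ splits into one of $\mathfrak{m}_{\xi}$ and one of $\mathfrak{m}_{\xi}^{\perp}$, so, $\mathrm{Cas}(\mathfrak{m})$ acting on $V_{\sigma}$ and $\mathrm{Cas}(\mathfrak{m}_{\xi})$ acting on $W_{\tau}(\xi)$ both being scalars by Schur's lemma, $\mathrm{Cas}(\mathfrak{m}_{\xi}^{\perp})$ acts on each $W_{\tau}(\xi)$ by a scalar and in particular leaves $\widehat{I}_{\sigma,\lambda}(\tau)$ invariant; and multiplication by any scalar function leaves each $\widehat{I}_{\sigma,\lambda}(\tau)$ invariant since $W_{\tau}(\xi)$ is a linear subspace, while $\sum_{a,b,d}\xi_{b}\xi_{d}\,d\widehat{\pi}_{\sigma,\lambda}(X_{ab})d\widehat{\pi}_{\sigma,\lambda}(X_{ad})$ is covered by Lemma~\ref{lemma:cnfpw3m9l9}.

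First I expand the definition of $\mathcal{B}_{0,\ell}^{\sigma}$ and contract with $\xi_{\ell}$. Using $\sum_{\ell}\xi_{\ell}\partial_{\ell}=E$, the commutation relations $[E,\partial_{\ell}]=-\partial_{\ell}$ and $[\partial_{i},\xi_{j}]=\delta_{ij}$, and the identity $\sum_{\ell}\xi_{\ell}\,d\sigma(\partial e_{\ell}^{t}-e_{\ell}\partial^{t})=d\sigma(\partial\xi^{t}-\xi\partial^{t})$, one finds
\[
\sum_{\ell=1}^{n}\xi_{\ell}\mathcal{B}_{0,\ell}^{\sigma}=\lVert\xi\rVert^{2}\Delta-2E^{2}-2(\rho-1)E-2\,d\sigma(\partial\xi^{t}-\xi\partial^{t}).
\]
It then remains to get rid of the two potentially bad terms $\lVert\xi\rVert^{2}\Delta$ and $d\sigma(\partial\xi^{t}-\xi\partial^{t})$. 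For this I feed in the third identity of Lemma~\ref{lemma:cnfpw3m9l9}, which expresses $\sum_{a,b,d}\xi_{b}\xi_{d}\,d\widehat{\pi}_{\sigma,\lambda}(X_{ab})d\widehat{\pi}_{\sigma,\lambda}(X_{ad})$ as $\sum_{a}d\sigma(e_{a}\xi^{t}-\xi e_{a}^{t})^{2}-2\lVert\xi\rVert^{2}d\sigma(\partial\xi^{t}-\xi\partial^{t})$ plus the differential operator $\sum_{a,b,d}\xi_{b}\xi_{d}(\xi_{a}\partial_{b}-\xi_{b}\partial_{a})(\xi_{a}\partial_{d}-\xi_{d}\partial_{a})$; by the second identity of Lemma~\ref{lemma:cnfqcxk32j} the latter equals $\lVert\xi\rVert^{4}\Delta-\lVert\xi\rVert^{2}E^{2}-(n-2)\lVert\xi\rVert^{2}E$, and by definition $\sum_{a}d\sigma(e_{a}\xi^{t}-\xi e_{a}^{t})^{2}=\lVert\xi\rVert^{2}\mathrm{Cas}(\mathfrak{m}_{\xi}^{\perp})$. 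Solving for $d\sigma(\partial\xi^{t}-\xi\partial^{t})$ and multiplying by $\lVert\xi\rVert^{-2}$ (legitimate on $\mathbb{R}^{n}\setminus\{0\}$, as all coefficients being cancelled are multiplication operators) expresses $2\,d\sigma(\partial\xi^{t}-\xi\partial^{t})$ through $\mathrm{Cas}(\mathfrak{m}_{\xi}^{\perp})$, $\lVert\xi\rVert^{2}\Delta$, $E$, $E^{2}$ and $\lVert\xi\rVert^{-2}\sum_{a,b,d}\xi_{b}\xi_{d}\,d\widehat{\pi}_{\sigma,\lambda}(X_{ab})d\widehat{\pi}_{\sigma,\lambda}(X_{ad})$.

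Substituting this back, the two copies of $\lVert\xi\rVert^{2}\Delta$ cancel and, since $\rho=\tfrac{n}{2}$ gives $2(\rho-1)=n-2$, the net coefficient of $E$ is zero, so one arrives at the clean formula
\[
\sum_{\ell=1}^{n}\xi_{\ell}\mathcal{B}_{0,\ell}^{\sigma}=-E^{2}-\mathrm{Cas}(\mathfrak{m}_{\xi}^{\perp})+\lVert\xi\rVert^{-2}\sum_{a,b,d=1}^{n}\xi_{b}\xi_{d}\,d\widehat{\pi}_{\sigma,\lambda}(X_{ab})d\widehat{\pi}_{\sigma,\lambda}(X_{ad}),
\]
and the lemma follows from the admissibility of the three building blocks noted above. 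I expect the only real obstacle to be bookkeeping: keeping operator orderings straight when commuting $\xi_{i}$ past $\partial_{j}$ and $E$ past $\partial_{j}$ in the first step, and then inserting Lemmas~\ref{lemma:cnfpw3m9l9} and~\ref{lemma:cnfqcxk32j} so that the $\Delta$- and $E$-contributions cancel on the nose. Conceptually, the point is that the bad pieces $\lVert\xi\rVert^{2}\Delta$ and $d\sigma(\partial\xi^{t}-\xi\partial^{t})$ recombine into pure $\mathfrak{m}$-Casimir data, and the disappearance of the first-order term is exactly the rank-one normalization $\rho=\tfrac{n}{2}$.
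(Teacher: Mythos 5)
Your proof is correct, and I checked the computation: expanding $\sum_{\ell}\xi_{\ell}\mathcal{B}_{0,\ell}^{\sigma}$ with $[E,\partial_{\ell}]=-\partial_{\ell}$ does give $\lVert\xi\rVert^{2}\Delta-2E^{2}-(n-2)E-2\,d\sigma(\partial\xi^{t}-\xi\partial^{t})$, and eliminating $d\sigma(\partial\xi^{t}-\xi\partial^{t})$ via the third identity of Lemma~\ref{lemma:cnfpw3m9l9} together with Lemma~\ref{lemma:cnfqcxk32j} cancels both $\lVert\xi\rVert^{2}\Delta$ and the first-order term exactly, leaving $-E^{2}-\mathrm{Cas}(\mathfrak{m}_{\xi}^{\perp})+\lVert\xi\rVert^{-2}\sum_{a,b,d}\xi_{b}\xi_{d}\,d\widehat{\pi}_{\sigma,\lambda}(X_{ab})d\widehat{\pi}_{\sigma,\lambda}(X_{ad})$, a sum of three operators each of which preserves the fiberwise condition $f(\xi)\in W_{\tau}(\xi)$ for the reasons you give. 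This is, however, a genuinely different argument from the paper's. The paper avoids all computation by invoking the Casimir element of $\mathfrak{g}$: with respect to the normalized Killing form, $\mathrm{Cas}(\mathfrak{g})=-H^{2}+\sum_{i}X_{i}^{2}+\sum_{j}\bigl(2\overline{N}_{j}N_{j}+[N_{j},\overline{N}_{j}]\bigr)$, where $H^{2}$, $X_{i}^{2}$ and $[N_{j},\overline{N}_{j}]$ all lie in $\mathcal{U}(\mathfrak{m}\oplus\mathfrak{a})$ and hence preserve each $\widehat{I}_{\sigma,\lambda}(\tau)$ (as does $\mathrm{Cas}(\mathfrak{g})$ itself, being central); therefore so does $\sum_{j}d\widehat{\pi}_{\sigma,\lambda}(\overline{N}_{j})d\widehat{\pi}_{\sigma,\lambda}(N_{j})=-\sum_{j}\xi_{j}\mathcal{B}_{0,j}^{\sigma}-2\lambda E$, and the $2\lambda E$ term is absorbed by the $\mathfrak{a}$-action. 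The paper's route is shorter and explains conceptually why the identity must hold (your closing formula is essentially the Casimir identity written out in coordinates, with $-E^{2}$ playing the role of $-H^{2}$ and the remaining two terms assembling $\sum_{i}X_{i}^{2}$ modulo lower-order corrections); your route has the merit of producing that explicit identity and of reusing only the already-established Lemmas~\ref{lemma:cnfpw3m9l9} and~\ref{lemma:cnfqcxk32j}, at the cost of more bookkeeping.
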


\begin{proof}
  Let $X_1 , \ldots, X_{\dim(\mathfrak{m})}$ be any orthonormal basis of $\mathfrak{m}$ with respect to $\tilde{B}$. Then
  \begin{align*}
  \{H,X_1 , \ldots , X_{\dim (\mathfrak{m})}, N_1 , \ldots , N_n , \overline{N}_1 ,\ldots ,  \overline{N}_n\}
\end{align*}
  is a basis of $\mathfrak{g}$ with dual basis $-H,X_1 , \ldots , X_{\dim (\mathfrak{m})}, \overline{N}_1 , \ldots , \overline{N}_n , N_1,\ldots , N_n $.
  Thus, the Casimir element of $\mathfrak{g}$ is given by
  \begin{align*}
\mathrm{Cas}(\mathfrak{g})=-H^2 + \sum_{i=1}^{n}X_i^2 +\sum_{j=1}^{n}\overline{N}_j N_j +N_j \overline{N}_j =-H^2 + \sum_{i=1}^{n}X_i^2 + \sum_{j=1}^{n}2\overline{N}_j N_j+[N_j , \overline{N}_j ],
  \end{align*}
  where all terms $H, X_i^2$ and $[N_j , \overline{N}_j ]$ belong to the universal enveloping algebra of $\mathfrak{m} \oplus \mathfrak{a}$. Since $\mathfrak{a},\mathfrak{m}$ and $\mathrm{Cas}(\mathfrak{g})$ leave $\widehat{I}_{\sigma,\lambda}(\tau)$ invariant, so does the action of $\sum_{j=1}^{n}\overline{N}_j N_j $. But
    \begin{gather*}
\sum_{j=1}^{n}d \widehat{\pi}_{\sigma , \lambda } (\overline{N}_j) d \widehat{\pi}_{\sigma , \lambda } (N_j )=-\sum_{j=1}^{n}\xi_j \mathcal{B}_{\lambda , j}^\sigma=-\sum_{j=1}^{n}\xi_j \mathcal{B}_{0, j}^\sigma - 2 \lambda E,
\end{gather*}
where the Euler operator leaves $\widehat{I}_{\sigma,\lambda}(\tau)$ invariant since it comes from the $\mathfrak{a}$-action.
\end{proof}

This proposition allows us to prove the main result of this section.

\begin{proposition}\label{proposition:cnfql9m3l9}
Let $\tau_1 , \tau_2 \in \widehat{M}_{e_1 }$ be two inequivalent representations. Then
  \begin{align*}
i\,\mathrm{pr}_{\widehat{I}_{\sigma,\lambda}(\tau_2)}(d \widehat{\pi}_{\sigma , \lambda }(N_j )|_{\widehat{I}_{\sigma,\lambda}(\tau_1)})=\mathrm{pr}_{\widehat{I}_{\sigma,\lambda}(\tau_2)}(([\mathrm{Cas}(\mathfrak{m}_\xi^\perp ), \partial_j ] + 2 \lambda \partial_j)|_{\widehat{I}_{\sigma,\lambda}(\tau_1)}),
\end{align*}
where $\mathrm{pr}_{\widehat{I}_{\sigma,\lambda}(\tau_2)}$ denotes the projection onto $\widehat{I}_{\sigma,\lambda}(\tau_2)$ in the decomposition \eqref{eq:DecompositionIhat}.
\end{proposition}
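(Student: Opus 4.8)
The plan is to deduce this directly from Lemma~\ref{lem:FpictureActionN} together with Proposition~\ref{proposition:cnfqtl00hg}; essentially no new computation is required.

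First I would rewrite the left-hand side. By Lemma~\ref{lem:FpictureActionN} we have $i\,d\widehat{\pi}_{\sigma,\lambda}(N_j)=\mathcal{B}_{\lambda,j}^\sigma$ as differential operators on $\widehat{I}_{\sigma,\lambda}$, and the defining formula for $\mathcal{B}_{\lambda,j}^\sigma$ shows immediately that $\mathcal{B}_{\lambda,j}^\sigma=\mathcal{B}_{0,j}^\sigma+2\lambda\partial_j$, since $\lambda$ enters only through the summand $-2(E-\lambda+\rho)\partial_j$. Hence $i\,d\widehat{\pi}_{\sigma,\lambda}(N_j)=\mathcal{B}_{0,j}^\sigma+2\lambda\partial_j$ on $\widehat{I}_{\sigma,\lambda}$.

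Next I would compare with the right-hand side. Using $[\mathrm{Cas}(\mathfrak{m}_\xi^\perp),\partial_j]=-[\partial_j,\mathrm{Cas}(\mathfrak{m}_\xi^\perp)]$ and cancelling the common term $2\lambda\partial_j$, the difference of the two operators occurring in the claim is
\[
i\,d\widehat{\pi}_{\sigma,\lambda}(N_j)-\big([\mathrm{Cas}(\mathfrak{m}_\xi^\perp),\partial_j]+2\lambda\partial_j\big)=\mathcal{B}_{0,j}^\sigma+[\partial_j,\mathrm{Cas}(\mathfrak{m}_\xi^\perp)].
\]
By Proposition~\ref{proposition:cnfqtl00hg} this operator leaves each summand $\widehat{I}_{\sigma,\lambda}(\tau)$ of the decomposition \eqref{eq:DecompositionIhat} invariant; in particular it maps $\widehat{I}_{\sigma,\lambda}(\tau_1)$ into itself. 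Composing with the projection $\mathrm{pr}_{\widehat{I}_{\sigma,\lambda}(\tau_2)}$ onto a different summand therefore gives zero, because $\tau_1$ and $\tau_2$ are inequivalent and hence $\widehat{I}_{\sigma,\lambda}(\tau_1)$ and $\widehat{I}_{\sigma,\lambda}(\tau_2)$ are distinct (complementary) direct summands. This is exactly the asserted identity.

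The only point worth emphasising — and it is already absorbed into Proposition~\ref{proposition:cnfqtl00hg} — is that neither $d\widehat{\pi}_{\sigma,\lambda}(N_j)$ nor $[\mathrm{Cas}(\mathfrak{m}_\xi^\perp),\partial_j]$ preserves the $\tau$-grading on its own, precisely because $\partial_j$ differentiates the $\xi$-dependent subspaces $W_\tau(\xi)$; it is only the particular combination in the displayed difference whose off-diagonal part cancels. Consequently there is no genuine obstacle remaining here: the entire content of the proposition is the elementary algebraic identity of the first step together with the already-established Proposition~\ref{proposition:cnfqtl00hg}.
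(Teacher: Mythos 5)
Your proof is correct and follows exactly the paper's argument: write $i\,d\widehat{\pi}_{\sigma,\lambda}(N_j)=\mathcal{B}_{0,j}^\sigma+2\lambda\partial_j$, add and subtract $[\partial_j,\mathrm{Cas}(\mathfrak{m}_\xi^\perp)]$, and invoke Proposition~\ref{proposition:cnfqtl00hg} to kill the off-diagonal contribution of $\mathcal{B}_{0,j}^\sigma+[\partial_j,\mathrm{Cas}(\mathfrak{m}_\xi^\perp)]$. No gaps; your version merely spells out the cancellation that the paper leaves implicit.
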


\begin{proof}
  We write
  \begin{align*}
i\,d \widehat{\pi}_{\sigma , \lambda }(N_j )&=\mathcal{B}_{0 , j}^\sigma +2 \lambda \partial_j = \mathcal{B}_{0,j}^\sigma + [\partial_j , \mathrm{Cas}(\mathfrak{m}_\xi^\perp )]-[\partial_j , \mathrm{Cas}(\mathfrak{m}_\xi^\perp )]+2 \lambda \partial_j
  \end{align*}
  and apply Proposition \ref{proposition:cnfqtl00hg}.
\end{proof}

\section{Eigenvalues of intertwining operators and explicit Hilbert spaces}

In this section we find an explicit expression for the standard intertwining operators $A_{\sigma,\lambda}$ in the F-picture and use it to describe the invariant inner products on irreducible unitarizable subrepresentations and quotients. For this, we first find a description of all irreducible subrepresentations and quotients in the F-picture.

\subsection{Subrepresentations in the F-picture}

Recall the decomposition \eqref{eq:DecompositionIhat} of $\widehat{I}_{\sigma,\lambda}$ into the $\overline{P}$-invariant subspaces $\widehat{I}_{\sigma,\lambda}(\tau)$, where $\tau\in\widehat{M}_{e_1}$, $\tau\preceq\sigma$. In order to identify subrepresentations, it suffices to understand how the $\mathfrak{n}$-action connects the different subspaces $\widehat{I}_{\sigma,\lambda}(\tau)$. To describe this action in more detail, we use the same parametrization as in Section~\ref{sec:PrincipalSeriesAndIntertwiningOperators} for the unitary dual of $M_{e_1}\cong\mathrm{SO}(n-1)$. Abusing notation, the highest weight of an irreducible unitary representation $\tau$ of $M_{e_1}$ is a tuple $\tau\in\mathbb{Z}^{m-1}$ such that $\tau_1\geq\ldots\geq\tau_{m-2}\geq\tau_{m-1}\geq0$ if $n=2m$ is even and $\tau_1\geq\ldots\geq\tau_{m-2}\geq|\tau_{m-1}|$ if $n=2m-1$ is odd. Then $\tau\preceq\sigma$ if and only if
\begin{alignat*}{2}
&\sigma_1\geq \tau_1\geq \sigma_2\geq \tau_2\geq \ldots \geq \tau_{m-1}\geq |\sigma_m| && \text{if }n=2m,\\
&\sigma_1\geq \tau_1\geq \sigma_2\geq \tau_2\geq \ldots \geq \tau_{m-2}\geq \sigma_{m-1}\geq |\tau_{m-1}|\qquad && \text{if }n=2m-1.
\end{alignat*}

By Proposition \ref{proposition:cnfql9m3l9}, the $\mathfrak{n}$-action can only reach those $\widehat{I}_{\sigma,\lambda}(\tau')$ that are reachable by one application of $\partial_j$ for some $j\in\{1,\ldots, n\}$. The following lemma describes these possibilities.

\begin{lemma}\label{lemma:cnfql9m3l9}
Let $f\in\widehat{I}_{\sigma,\lambda}(\tau)$ and $v\in\mathbb{R}^n$, then
$$ \partial_vf \in \widehat{I}_{\sigma,\lambda}(\tau)\oplus\bigoplus_{i=1}^{m-1}\Big(\widehat{I}_{\sigma,\lambda}(\tau+e_i)\oplus \widehat{I}_{\sigma,\lambda}(\tau-e_i)\Big). $$
\end{lemma}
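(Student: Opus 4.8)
The plan is to reduce the assertion to a pointwise identity of $\mathrm{End}(V_\sigma)$-valued functions of $\xi$, and then to read it off from the classical $\mathrm{SO}(n-1)$-branching rule together with the first-order behaviour of the family $\xi\mapsto W_\tau(\xi)$. For $\tau'\in\widehat{M}_{e_1}$ with $\tau'\preceq\sigma$, let $P_{\tau'}$ denote the projection of $\widehat{I}_{\sigma,\lambda}$ onto $\widehat{I}_{\sigma,\lambda}(\tau')$ along the other summands of \eqref{eq:DecompositionIhat}; by the very definition of $\widehat{I}_{\sigma,\lambda}(\tau')$ this is the operator of multiplication by the $\mathrm{End}(V_\sigma)$-valued function $P_{\tau'}(\xi)$, the projection onto $W_{\tau'}(\xi)$ along $\bigoplus_{\tau''\ne\tau'}W_{\tau''}(\xi)$, which is smooth on $\mathbb{R}^n\setminus\{0\}$. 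Since $\partial_v f\in\widehat{I}_{\sigma,\lambda}$ (already used in the previous section, where $\partial_j$ occurs as an operator on $\widehat{I}_{\sigma,\lambda}$, e.g.\ in Propositions~\ref{proposition:cnfqtl00hg} and~\ref{proposition:cnfql9m3l9}; if one wants this from scratch it amounts to $I_{\sigma,\lambda}$ being stable modulo polynomials under multiplication by the coordinate functions), it suffices to show $P_{\tau''}\partial_v f=0$ for every $\tau''\notin\{\tau\}\cup\{\tau\pm e_i:1\le i\le m-1\}$.

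For such an $f$ we have $f=P_\tau f$, so by the Leibniz rule for $\partial_v$ applied to the product of the smooth function $P_\tau(\xi)$ and the distribution $f$, together with $P_{\tau''}(\xi)P_\tau(\xi)=0$,
$$P_{\tau''}\partial_v f=P_{\tau''}\,\partial_v(P_\tau f)=P_{\tau''}(\partial_v P_\tau)\,f,$$
where $\partial_v P_\tau$ now denotes the directional derivative of the matrix-valued function $P_\tau(\xi)$. Hence it is enough to verify $P_{\tau''}(\xi)(\partial_v P_\tau)(\xi)=0$ for all $\xi\in\mathbb{R}^n\setminus\{0\}$. Because $W_{\tau'}(m\xi)=\sigma(m)W_{\tau'}(\xi)$ for $m\in M$ and $W_{\tau'}(r\xi)=W_{\tau'}(\xi)$ for $r>0$, this reduces by $M$- and dilation-equivariance to $\xi=e_1$. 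Write $v=v_1e_1+v'$ with $v'\in e_1^\perp$. The radial part contributes nothing: along $t\mapsto te_1$ one has $M_{te_1}=M_{e_1}$, so $P_\tau(te_1)=P_\tau(e_1)$ is constant and $\partial_{e_1}P_\tau(e_1)=0$. For a transverse direction $e_a$ ($2\le a\le n$) take $c_a(t)=\exp(tX_{1a})e_1$ with $c_a'(0)=e_a$; since $M_{c_a(t)}=\exp(tX_{1a})M_{e_1}\exp(-tX_{1a})$ we get $P_\tau(c_a(t))=\sigma(\exp tX_{1a})P_\tau(e_1)\sigma(\exp tX_{1a})^{-1}$, and differentiating at $t=0$ gives $\partial_{e_a}P_\tau(e_1)=[d\sigma(X_{1a}),P_\tau(e_1)]$. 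Therefore $(\partial_v P_\tau)(e_1)=[d\sigma(Z),P_\tau(e_1)]$ with $Z:=\sum_{a\ge 2}v_a X_{1a}\in\mathfrak{m}_{e_1}^\perp$, and, using $P_{\tau''}(e_1)P_\tau(e_1)=0$ once more, $P_{\tau''}(e_1)(\partial_v P_\tau)(e_1)=P_{\tau''}(e_1)\,d\sigma(Z)\,P_\tau(e_1)$.

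It remains to see that $P_{\tau''}(e_1)\,d\sigma(Z)\,W_\tau(e_1)=0$ for $Z\in\mathfrak{m}_{e_1}^\perp$ when $\tau''\notin\{\tau\}\cup\{\tau\pm e_i\}$. The adjoint action of $M_{e_1}\cong\mathrm{SO}(n-1)$ identifies $\mathfrak{m}_{e_1}^\perp$ with the vector representation $\mathbb{C}^{n-1}$ (via $X_{1a}\leftrightarrow e_a$), and $\mathfrak{m}_{e_1}^\perp\otimes V_\sigma\to V_\sigma$, $Z\otimes w\mapsto d\sigma(Z)w$, is $M_{e_1}$-equivariant, as is its restriction to $\mathfrak{m}_{e_1}^\perp\otimes W_\tau(e_1)$. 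Since $W_\tau(e_1)$ is one copy of $V_\tau$ as an $M_{e_1}$-module, the image $d\sigma(\mathfrak{m}_{e_1}^\perp)W_\tau(e_1)$ is a quotient of $\mathbb{C}^{n-1}\otimes V_\tau$ and hence involves only irreducibles $V_{\tau'}$ occurring in $\mathbb{C}^{n-1}\otimes V_\tau$; by the classical decomposition of the tensor product of an irreducible representation of $\mathrm{SO}(n-1)$ with the vector representation, every such $\tau'$ differs from $\tau$ by a weight of $\mathbb{C}^{n-1}$, i.e.\ $\tau'\in\{\tau\}\cup\{\tau\pm e_i:1\le i\le m-1\}$. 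As $V_\sigma|_{M_{e_1}}$ is multiplicity-free, this image is the sum of the corresponding $W_{\tau'}(e_1)$, so it meets $W_{\tau''}(e_1)$ trivially. Thus $P_{\tau''}(e_1)\,d\sigma(Z)\,P_\tau(e_1)=0$, hence $P_{\tau''}\partial_v f=0$, and $\partial_v f=\sum_{\tau'\in\{\tau\}\cup\{\tau\pm e_i\}}P_{\tau'}\partial_v f$ lies in the asserted direct sum (with the usual convention that $\widehat{I}_{\sigma,\lambda}(\tau')=0$ if $\tau'\not\preceq\sigma$, which only deletes summands).

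The individual computations are short; the points requiring care are the input $\partial_v f\in\widehat{I}_{\sigma,\lambda}$, the precise form of the $\mathrm{SO}(n-1)$-tensor rule and its degenerate cases (small $n$, $\tau$ on a wall of the dominant chamber, $\tau\pm e_i\not\preceq\sigma$), and the identification of $\mathfrak{m}_{e_1}^\perp$ with the vector representation. The genuinely substantive point — and the expected main obstacle — is that only the \emph{distance-one} shifts $\tau\pm e_i$ occur: this is the statement that the ``second fundamental form'' of the subbundle $\xi\mapsto W_\tau(\xi)$, realised above as $Z\mapsto[d\sigma(Z),P_\tau(\xi)]$ with $Z\in\mathfrak{m}_\xi^\perp$, takes values in the neighbouring isotypic components only, and it is what rules out the distance-two shifts that the naive estimate from the commutator $[\partial_j,\mathrm{Cas}(\mathfrak{m}_\xi^\perp)]$ would a priori permit.
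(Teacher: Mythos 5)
Your proof is correct and takes essentially the same route as the paper's: both arguments reduce the claim to the fact that $d\sigma(\mathfrak{m}_{e_1}^\perp)W_\tau$ lies in $W_\tau\oplus\bigoplus_i(W_{\tau+e_i}\oplus W_{\tau-e_i})$, via the branching of the tensor product of $V_\tau$ with the vector representation of $\mathrm{SO}(n-1)$ together with multiplicity-freeness of $V_\sigma|_{M_{e_1}}$. The only (cosmetic) difference is in the reduction: the paper splits $v$ at each $\xi$ into radial and tangential parts and uses the $A$- and $M$-invariance of $\widehat{I}_{\sigma,\lambda}(\tau)$, whereas you differentiate the projection-valued function $P_\tau(\xi)$ directly and arrive at the same commutator $[d\sigma(Z),P_\tau(\xi)]$ with $Z\in\mathfrak{m}_{e_1}^\perp$ (note only the harmless sign $c_a'(0)=-e_a$).
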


\begin{proof}
We have to show that
$$ \forall\xi\in\mathbb{R}^n\setminus\{0\}: \qquad \partial_vf(\xi)\in W_\tau(\xi)\oplus\bigoplus_{i=1}^{m-1}\Big(W_{\tau+e_i}(\xi)\oplus W_{\tau-e_i}(\xi)\Big). $$
Fix $\xi\in\mathbb{R}^n\setminus\{0\}$ we write $v=\alpha\xi+w$ with $\alpha\in\mathbb{R}$ and $w\perp\xi$. Since $\mathfrak{m}\xi=\xi^\perp$, there exists $T\in\mathfrak{m}$ such that $T\xi=w$. Together we find
\begin{align*}
	\partial_vf(\xi) &= \alpha\partial_\xi f(\xi)+\partial_{T\xi}f(\xi) = \alpha Ef(\xi) + \partial_{T\xi}f(\xi)\\
	&= -d\widehat{\pi}_{\sigma,\lambda}(H_0)f(\xi)+(\lambda-\rho)f(\xi)-d\widehat{\pi}_{\sigma,\lambda}(T)f(\xi)+d\sigma(T)f(\xi).
\end{align*}
The second term is clearly contained in $W_\tau(\xi)$. Moreover, since both $M$ and $A$ leave $\widehat{I}_{\sigma,\lambda}(\tau)$ invariant, both $d\widehat{\pi}_{\sigma,\lambda}(H_0)f$ and $d\widehat{\pi}_{\sigma,\lambda}(T)f$ are contained in $\widehat{I}_{\sigma,\lambda}(\tau)$, so the first and the third terms are also contained in $W_\tau(\xi)$. Finally, the fourth term is contained in
$$ d\sigma(T)W_\tau(\xi) = d\sigma(T)\sigma(m_\xi)W_\tau = \sigma(m_\xi)d\sigma(\operatorname{Ad}(m_\xi)^{-1}T)W_\tau \subseteq \sigma(m_\xi)d\sigma(\mathfrak{m})W_\tau. $$
Clearly $d\sigma(\mathfrak{m}_{e_1})W_\tau\subseteq W_\tau$. To decompose $d\sigma(\mathfrak{m}_{e_1}^\perp)W_\tau$, we first note that, as a representation of $M_{e_1}$, we have $\mathfrak{m}_{e_1}^\perp\simeq\mathbb{R}^{n-1}$. Hence, $d\sigma(\mathfrak{m}_{e_1}^\perp)W_\tau$ is a quotient of $W_{(1,0,\ldots,0)}\otimes W_\tau$. The possible highest weights in the tensor product are sums of $\tau$ and weights of $W_{(1,0,\ldots,0)}$. The latter are given by $\pm e_i$ and possibly $0$. This shows that
$$ d\sigma(\mathfrak{m})W_\tau \subseteq W_\tau\oplus\bigoplus_{i=1}^{m-1}\Big(W_{\tau+e_i}\oplus W_{\tau-e_i}\Big) $$
and the proof is complete.
\end{proof}

Whether $\widehat{I}_{\sigma,\lambda}(\tau\pm e_i)$ can in fact be reached from $\widehat{I}_{\sigma,\lambda}(\tau)$ by the $\mathfrak{n}$-action can now be determined using Proposition~\ref{proposition:cnfql9m3l9}:

\begin{proposition}\label{prop:NactionInFpicture}
Let $\tau\in\widehat{M}_{e_1}$ and $f\in\widehat{I}_{\sigma,\lambda}(\tau)$. Then for $1\leq j\leq n$:
$$ d\widehat{\pi}_{\sigma,\lambda}(N_j)f \in \widehat{I}_{\sigma,\lambda}(\tau)\oplus\bigoplus_{i=1}^{m-1}\Big(\widehat{I}_{\sigma,\lambda}(\tau+e_i)\oplus \widehat{I}_{\sigma,\lambda}(\tau-e_i)\Big) $$
and
\begin{equation}
	\mathrm{pr}_{\widehat{I}_{\sigma,\lambda}(\tau\pm e_i)}\big(d\widehat{\pi}_{\sigma,\lambda}(N_j)f\big) = -i\big(2\lambda+1\pm2(\tau_i+\tfrac{n-2i-1}{2})\big)\,\mathrm{pr}_{\widehat{I}_{\sigma,\lambda}(\tau\pm e_i)}\big(\partial_jf\big) \qquad (1\leq i\leq m-1).\label{eq:NactionInFpicture}
\end{equation}
\end{proposition}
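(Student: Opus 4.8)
The plan is to obtain both assertions from Proposition~\ref{proposition:cnfql9m3l9} and Lemma~\ref{lemma:cnfql9m3l9}, the missing ingredient being an understanding of how $\mathrm{Cas}(\mathfrak{m}_\xi^\perp)$ acts on the summands $\widehat{I}_{\sigma,\lambda}(\tau)$. By Proposition~\ref{proposition:cnfql9m3l9}, for $f\in\widehat{I}_{\sigma,\lambda}(\tau)$ and $\tau'\in\widehat{M}_{e_1}$ with $\tau'\not\simeq\tau$ we have $i\,\mathrm{pr}_{\widehat{I}_{\sigma,\lambda}(\tau')}(d\widehat{\pi}_{\sigma,\lambda}(N_j)f)=\mathrm{pr}_{\widehat{I}_{\sigma,\lambda}(\tau')}\big(([\mathrm{Cas}(\mathfrak{m}_\xi^\perp),\partial_j]+2\lambda\partial_j)f\big)$, while the $\widehat{I}_{\sigma,\lambda}(\tau)$-component of $d\widehat{\pi}_{\sigma,\lambda}(N_j)f$ lies in $\widehat{I}_{\sigma,\lambda}(\tau)$ by definition. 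Hence it suffices to evaluate the right-hand side for all $\tau'\not\simeq\tau$.

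First I would show that $\mathrm{Cas}(\mathfrak{m}_\xi^\perp)$ acts on $\widehat{I}_{\sigma,\lambda}(\tau)$ by the scalar $\omega_\sigma-\omega_\tau$, where $\omega_\sigma$ denotes the eigenvalue of the Casimir $\Omega_{\mathfrak{m}}$ of $\mathfrak{m}\cong\mathfrak{so}(n)$ (normalized by $\tilde{B}$) on $V_\sigma$ and $\omega_\tau$ the eigenvalue of the Casimir $\Omega_{\mathfrak{m}_\xi}$ of $\mathfrak{m}_\xi\cong\mathfrak{so}(n-1)$ on $W_\tau(\xi)\cong\tau$. Indeed, completing an orthonormal basis of $\mathfrak{m}_\xi^\perp$ by one of $\mathfrak{m}_\xi$ gives an orthonormal basis of $\mathfrak{m}$, so as quadratic elements of $\mathcal{U}(\mathfrak{m})$ we have $\mathrm{Cas}(\mathfrak{m}_\xi^\perp)=\Omega_{\mathfrak{m}}-\Omega_{\mathfrak{m}_\xi}$; since both summands are $\mathrm{Ad}(M_\xi)$-invariant, $d\sigma$ of $\mathrm{Cas}(\mathfrak{m}_\xi^\perp)$ commutes with $\sigma(M_\xi)$, hence preserves the $M_\xi$-isotypic component $W_\tau(\xi)$ and acts on it by $\omega_\sigma-\omega_\tau$, a scalar independent of $\xi$ because the $W_\tau(\xi)$ are all $M$-conjugate. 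Consequently $\mathrm{Cas}(\mathfrak{m}_\xi^\perp)$ commutes with each projection $\mathrm{pr}_{\widehat{I}_{\sigma,\lambda}(\tau')}$.

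Granting this, for $f\in\widehat{I}_{\sigma,\lambda}(\tau)$ and $\tau'\not\simeq\tau$ the identity $[\mathrm{Cas}(\mathfrak{m}_\xi^\perp),\partial_j]f=\mathrm{Cas}(\mathfrak{m}_\xi^\perp)\partial_jf-(\omega_\sigma-\omega_\tau)\partial_jf$ yields
\[
	\mathrm{pr}_{\widehat{I}_{\sigma,\lambda}(\tau')}\big([\mathrm{Cas}(\mathfrak{m}_\xi^\perp),\partial_j]f\big)=(\omega_\tau-\omega_{\tau'})\,\mathrm{pr}_{\widehat{I}_{\sigma,\lambda}(\tau')}(\partial_jf),
\]
so that $\mathrm{pr}_{\widehat{I}_{\sigma,\lambda}(\tau')}(d\widehat{\pi}_{\sigma,\lambda}(N_j)f)=-i(2\lambda+\omega_\tau-\omega_{\tau'})\,\mathrm{pr}_{\widehat{I}_{\sigma,\lambda}(\tau')}(\partial_jf)$. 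By Lemma~\ref{lemma:cnfql9m3l9} (applied with $v=e_j$) the right-hand side is zero unless $\tau'=\tau\pm e_i$ for some $1\le i\le m-1$, which gives the first assertion. It then remains to compute $\omega_\tau-\omega_{\tau\pm e_i}$ for $\mathrm{SO}(n-1)$: with the normalization $\tilde{B}$ the induced inner product on the weight lattice is the standard Euclidean one $\langle\epsilon_a,\epsilon_b\rangle=\delta_{ab}$, so $\omega_\mu=-\langle\mu+2\rho_c,\mu\rangle$, where $\rho_c$ is the half-sum of positive roots of $\mathfrak{so}(n-1)$, whose $i$-th coordinate equals $\frac{n-1}{2}-i=\frac{n-2i-1}{2}$ in both parities. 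A one-line computation gives $\omega_\tau-\omega_{\tau\pm e_i}=1\pm2(\tau_i+\frac{n-2i-1}{2})$, and substituting this into the previous display produces exactly \eqref{eq:NactionInFpicture}.

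The only step that requires genuine care is the sign-and-normalization bookkeeping: that $\tilde{B}(X,Y)=-\frac{1}{2}\operatorname{tr}(XY)$ induces the Euclidean inner product on weights is safest to verify by evaluating the Casimir $\sum_{a<b}(e_ae_b^t-e_be_a^t)^2$ of $\mathfrak{so}(N)$ on the defining representation on $\mathbb{C}^N$, where it equals $-(N-1)\cdot\mathrm{id}$; and one has to keep in mind that $d\sigma$ of a real skew-symmetric matrix is skew-Hermitian, so $\mathrm{Cas}(\mathfrak{m}_\xi^\perp)$ is negative semidefinite, which fixes the order of the difference $\omega_\sigma-\omega_\tau$. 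One also uses that $\mathrm{SO}(n-1)$ is of type $B_{m-1}$ when $n=2m$ and of type $D_{m-1}$ when $n=2m-1$, which is what makes the formula for $\rho_c$ uniform. Everything else is routine.
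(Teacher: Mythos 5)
Your proposal is correct and follows essentially the same route as the paper: it combines Proposition~\ref{proposition:cnfql9m3l9} and Lemma~\ref{lemma:cnfql9m3l9} with the fact that $\mathrm{Cas}(\mathfrak{m}_\xi^\perp)=\mathrm{Cas}(\mathfrak{m})-\mathrm{Cas}(\mathfrak{m}_\xi)$ acts by the scalar $\omega_\sigma-\omega_\tau$ on $W_\tau(\xi)$, and then evaluates the eigenvalue difference $\omega_\tau-\omega_{\tau\pm e_i}=1\pm2(\tau_i+\tfrac{n-2i-1}{2})$ for $\mathfrak{so}(n-1)$. The only cosmetic difference is that you cancel the $\omega_\sigma$-terms directly, whereas the paper first replaces $[\mathrm{Cas}(\mathfrak{m}_\xi^\perp),\partial_j]$ by $[\partial_j,\mathrm{Cas}(\mathfrak{m}_\xi)]$; the normalization checks you record are exactly the ones needed.
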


\begin{proof}
The first claim follows immediately from Proposition~\ref{proposition:cnfql9m3l9}, Lemma \ref{lemma:cnfql9m3l9} and the fact that $\mathrm{Cas}(\mathfrak{m}_\xi^\perp)=\mathrm{Cas}(\mathfrak{m})-\mathrm{Cas}(\mathfrak{m}_\xi)$ acts by a scalar on each $W_\tau(\xi)$. To show the second claim we note that $\mathrm{Cas}(\mathfrak{m})$ acts on both $W_{\tau}$ and $W_{\tau\pm e_i}$ by the same scalar (depending only on $\sigma$). Thus,
\begin{multline*}
  \mathrm{pr}_{\widehat{I}_{\sigma,\lambda}(\tau\pm e_i)}([\mathrm{Cas}(\mathfrak{m}_\xi^\perp),\partial_j]f)=\mathrm{pr}_{\widehat{I}_{\sigma,\lambda}(\tau\pm e_i)}([\mathrm{Cas}(\mathfrak{m}),\partial_j]f)-\mathrm{pr}_{\widehat{I}_{\sigma,\lambda}(\tau\pm e_i)}([\mathrm{Cas}(\mathfrak{m}_\xi),\partial_j]f)\\
  =\mathrm{pr}_{\widehat{I}_{\sigma,\lambda}(\tau\pm e_i)}([\partial_j,\mathrm{Cas}(\mathfrak{m}_\xi)]f).
\end{multline*}
Since the action of $\mathrm{Cas}(\mathfrak{m}_\xi)$ on $W_{\tau}(\xi)$ is given by
\begin{gather}\label{gather:cngwmab562}
	\mathrm{Cas}(\mathfrak{m}_\xi)|_{W_\tau(\xi)} = -\langle \tau+2\rho_{\mathfrak{so}(n-1)},\tau\rangle\cdot\mathrm{id}_{W_\tau(\xi)},
\end{gather}
where $\rho_{\mathfrak{so}(n-1)}=\frac{1}{2}(n-3,n-5,\ldots,n-1-2\lfloor\frac{n-1}{2}\rfloor)$ is the half sum of the positive roots of $\mathfrak{so}(n-1)$ and $\langle\cdot,\cdot\rangle$ the standard inner product on $\mathbb{R}^{m-1}$, we obtain
\begin{align*}
\mathrm{pr}_{\widehat{I}_{\sigma,\lambda}(\tau\pm e_i)}([\mathrm{Cas}(\mathfrak{m}_\xi^\perp),\partial_j]f)&=(\langle\tau\pm e_i+2\rho_{\mathfrak{so}(n-1)},\tau\pm e_i\rangle-\langle\tau+2\rho_{\mathfrak{so}(n-1)},\tau\rangle)\mathrm{pr}_{\widehat{I}_{\sigma,\lambda}(\tau\pm e_i)}(\partial_jf)\\
&=(1\pm 2\langle\tau+\rho_{\mathfrak{so}(n-1)},e_i\rangle)\mathrm{pr}_{\widehat{I}_{\sigma,\lambda}(\tau\pm e_i)}(\partial_jf)\\
&=(1\pm2(\tau_i+\tfrac{n-2i-1}{2}))\mathrm{pr}_{\widehat{I}_{\sigma,\lambda}(\tau\pm e_i)}(\partial_jf).
\end{align*}
Together with Proposition~\ref{proposition:cnfql9m3l9} this shows the claimed formula.
\end{proof}

We can now identify the irreducible subrepresentations and quotients of $\widehat{I}_{\sigma,\lambda}$:

\begin{proposition}\label{prop:SubrepsAndQuotientsInFpicture}
Let $\sigma\in\widehat{M}$, $1\leq i\leq m-1$ and $0\leq j<\sigma_i-|\sigma_{i+1}|$.
\begin{enumerate}[(i)]
	\item For $i\neq\frac{n-1}{2}$, the irreducible subrepresentation $\mathfrak{I}(\sigma,i,j)$ resp. irreducible quotient $\mathfrak{Q}(\sigma,i,j)$ of $I_{\sigma,\lambda}$ for $\lambda=\rho-i+|\sigma_{i+1}|+j$ is via the Fourier transform $\mathcal{F}:I_{\sigma,\lambda}\to\widehat{I}_{\sigma,\lambda}$ isomorphic to
	$$ \widehat{\mathfrak{I}}(\sigma,i,j)=\bigoplus_{\substack{\tau\preceq\sigma\\\tau_i>|\sigma_{i+1}|+j}}\widehat{I}_{\sigma,\lambda}(\tau) \qquad \mbox{resp.} \qquad \widehat{\mathfrak{Q}}(\sigma,i,j)=\widehat{I}_{\sigma,\lambda}/\widehat{\mathfrak{I}}(\sigma,i,j)\simeq\bigoplus_{\substack{\tau\preceq\sigma\\\tau_i\leq|\sigma_{i+1}|+j}}\widehat{I}_{\sigma,\lambda}(\tau). $$
	\item For $i=\frac{n-1}{2}$, the sum of the two irreducible subrepresentations $\mathfrak{I}(\sigma,i,j)^+\oplus\mathfrak{I}(\sigma,i,j)^-$ resp. the irreducible quotient $\mathfrak{Q}(\sigma,i,j)$ of $I_{\sigma,\lambda}$ for $\lambda=\rho-i+|\sigma_{i+1}|+j$ is via the Fourier transform $\mathcal{F}:I_{\sigma,\lambda}\to\widehat{I}_{\sigma,\lambda}$ isomorphic to $\widehat{\mathfrak{I}}(\sigma,i,j)^+\oplus\widehat{\mathfrak{I}}(\sigma,i,j)^-$ resp. $\widehat{\mathfrak{Q}}(\sigma,i,j)$, where
	$$ \widehat{\mathfrak{I}}(\sigma,i,j)^\pm=\bigoplus_{\substack{\tau\preceq\sigma\\\pm\tau_i>j}}\widehat{I}_{\sigma,\lambda}(\tau) \qquad \mbox{resp.} \qquad \widehat{\mathfrak{Q}}(\sigma,i,j)=\widehat{I}_{\sigma,\lambda}/\bigl(\widehat{\mathfrak{I}}(\sigma,i,j)^+\oplus\widehat{\mathfrak{I}}(\sigma,i,j)^-\bigr)\simeq\bigoplus_{\substack{\tau\preceq\sigma\\|\tau_i|\leq j}}\widehat{I}_{\sigma,\lambda}(\tau). $$
\end{enumerate}
\end{proposition}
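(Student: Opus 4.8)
The plan is to transport the composition series of Proposition~\ref{prop:CompositionSeries} through the Fourier transform, using the explicit $\mathfrak{n}$-action in the F-picture from Proposition~\ref{prop:NactionInFpicture}. As a first step I would check that for $\lambda=\rho-i+|\sigma_{i+1}|+j$ with $1\le i\le m-1$ and $0\le j<\sigma_i-|\sigma_{i+1}|$ one has $0<\lambda<\rho+|\sigma_1|$, so by Proposition~\ref{prop:CompositionSeries} all composition factors of $I_{\sigma,\lambda}$ are infinite-dimensional (the first highest-weight coordinate of the occurring $K$-types is unbounded in each of $\mathfrak{I}(\sigma,i,j)$, $\mathfrak{I}(\sigma,i,j)^{\pm}$ and $\mathfrak{Q}(\sigma,i,j)$). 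Hence $I_{\sigma,\lambda}$ has no finite-dimensional subrepresentation, so by Lemma~\ref{lem:KernelRestriction} the restriction map $\mathcal{F}(I_{\sigma,\lambda})\to\widehat{I}_{\sigma,\lambda}$ is injective and $\mathcal{F}$ identifies $I_{\sigma,\lambda}$ with $\widehat{I}_{\sigma,\lambda}$. Consequently $\widehat{I}_{\sigma,\lambda}$ has, for $i\ne\tfrac{n-1}{2}$, a unique proper nonzero subrepresentation (its socle $\cong\mathfrak{I}(\sigma,i,j)$), and, for $i=\tfrac{n-1}{2}$, socle $\cong\mathfrak{I}(\sigma,i,j)^{+}\oplus\mathfrak{I}(\sigma,i,j)^{-}$ with no other composition factor occurring as a subrepresentation.

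Next I would show that the candidate $\widehat{\mathfrak{I}}(\sigma,i,j)=\bigoplus_{\tau\preceq\sigma,\ \tau_i>|\sigma_{i+1}|+j}\widehat{I}_{\sigma,\lambda}(\tau)$ is a subrepresentation. It is $\overline{P}$-stable by the definition of the decomposition~\eqref{eq:DecompositionIhat}, so since $\mathfrak{g}=\overline{\mathfrak{n}}\oplus\mathfrak{m}\oplus\mathfrak{a}\oplus\mathfrak{n}$ only $\mathfrak{n}$-stability remains. By Lemma~\ref{lemma:cnfql9m3l9} and Proposition~\ref{prop:NactionInFpicture}, $d\widehat{\pi}_{\sigma,\lambda}(N_j)$ maps $\widehat{I}_{\sigma,\lambda}(\tau)$ into $\widehat{I}_{\sigma,\lambda}(\tau)\oplus\bigoplus_{i'=1}^{m-1}\bigl(\widehat{I}_{\sigma,\lambda}(\tau+e_{i'})\oplus\widehat{I}_{\sigma,\lambda}(\tau-e_{i'})\bigr)$; among these summands the only one whose $i$-th weight coordinate is smaller than $\tau_i$ is $\widehat{I}_{\sigma,\lambda}(\tau-e_i)$, and it lies outside the index set only when $\tau_i=|\sigma_{i+1}|+j+1$, in which case the scalar in~\eqref{eq:NactionInFpicture} vanishes since $2\lambda+1-2\bigl(\tau_i+\tfrac{n-2i-1}{2}\bigr)=2\bigl(|\sigma_{i+1}|+j+1-\tau_i\bigr)=0$ (using $\lambda=\rho-i+|\sigma_{i+1}|+j$ and $\rho=\tfrac n2$). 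Thus $\widehat{\mathfrak{I}}(\sigma,i,j)$ is $\mathfrak{n}$-stable, hence $\mathfrak{g}$-stable, and, being closed, a subrepresentation; it is nonzero and proper because $|\sigma_{i+1}|\le|\sigma_{i+1}|+j<\sigma_i$ forces weights $\tau\preceq\sigma$ with $\tau_i$ on either side of $|\sigma_{i+1}|+j$. By the first step it must be the socle, so $\widehat{\mathfrak{I}}(\sigma,i,j)=\mathcal{F}(\mathfrak{I}(\sigma,i,j))$, and the complementary sum $\bigoplus_{\tau\preceq\sigma,\ \tau_i\le|\sigma_{i+1}|+j}\widehat{I}_{\sigma,\lambda}(\tau)$ carries the quotient $\mathcal{F}(\mathfrak{Q}(\sigma,i,j))$.

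For the split case $i=\tfrac{n-1}{2}$ (so $n=2m-1$, $i=m-1$, $|\sigma_{i+1}|=0$, $\lambda=\tfrac12+j$ and $\tfrac{n-2i-1}{2}=0$) the same computation shows that from $\{\tau_i>j\}$ the $\mathfrak{n}$-action cannot decrease $\tau_i$ past $j$, and from $\{\tau_i<-j\}$ it cannot increase $\tau_i$ past $-j$ (the relevant scalar $-i\bigl(2\lambda+1+2\tau_i\bigr)$ is proportional to $j+1+\tau_i$ and vanishes at $\tau_i=-j-1$). Hence both $\widehat{\mathfrak{I}}(\sigma,i,j)^{\pm}=\bigoplus_{\tau\preceq\sigma,\ \pm\tau_i>j}\widehat{I}_{\sigma,\lambda}(\tau)$ are subrepresentations; they are nonzero with trivial intersection, so their direct sum is a proper nonzero subrepresentation that is not irreducible, hence by the first step equals the socle $\mathfrak{I}(\sigma,i,j)^{+}\oplus\mathfrak{I}(\sigma,i,j)^{-}$, with $\widehat{\mathfrak{I}}(\sigma,i,j)^{\pm}$ as its two irreducible summands. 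Matching the $\pm$-labels via the sign of the last weight coordinate of the occurring $K$-types resp.\ $M_{e_1}$-types under $\mathrm{SO}(n+1)\downarrow\mathrm{SO}(n-1)$ (or taking this correspondence as the definition of the labelling) yields $\widehat{\mathfrak{I}}(\sigma,i,j)^{\pm}\cong\mathfrak{I}(\sigma,i,j)^{\pm}$, and the quotient is identified as before.

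The heart of the argument is the vanishing computation above; the only other point that needs care is that~\eqref{eq:NactionInFpicture} is the \emph{sole} obstruction to $\mathfrak{n}$-stability, which is immediate because a move $\tau\to\tau\pm e_{i'}$ with $i'\ne i$ does not change $\tau_i$ and $\tau\to\tau+e_i$ only increases it. The label-matching in the split case is a minor nuisance that can be sidestepped entirely by fixing the convention for $\pm$.
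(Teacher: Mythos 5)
Your argument is correct and follows essentially the same route as the paper: establish $\mathfrak{g}$-invariance of the candidate sums via Proposition~\ref{prop:NactionInFpicture} (your explicit vanishing computation $2\lambda+1-2(\tau_i+\tfrac{n-2i-1}{2})=2(|\sigma_{i+1}|+j+1-\tau_i)$ is exactly the point), then invoke the uniqueness of irreducible subrepresentations from Proposition~\ref{prop:CompositionSeries}. Two small remarks. First, to conclude that the candidate is nonzero and proper you need not only that weights $\tau$ occur on both sides of the cut, but that each $\widehat{I}_{\sigma,\lambda}(\tau)$ is itself nonzero; this requires a short argument (e.g.\ that it contains all $f\in C_c^\infty(\mathbb{R}^n\setminus\{0\})\otimes V_\sigma$ with $f(\xi)\in W_\tau(\xi)$, using \eqref{eq:FPictureSchwartz}), which you should not omit. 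Second, your claimed matching of the $\pm$-labels in the case $i=\tfrac{n-1}{2}$ via signs of last weight coordinates is not justified and is in fact a genuinely delicate point (the paper leaves it open); fortunately the statement only identifies the \emph{sum} $\mathfrak{I}(\sigma,i,j)^+\oplus\mathfrak{I}(\sigma,i,j)^-$, so, as you note yourself, this claim can simply be dropped.
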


\begin{proof}
It follows from Proposition~\ref{prop:NactionInFpicture} that
$$ \bigoplus_{\substack{\tau\preceq\sigma\\\tau_i>|\sigma_{i+1}|+j}}\widehat{I}_{\sigma,\lambda}(\tau) \qquad \mbox{resp.} \qquad \bigoplus_{\substack{\tau\preceq\sigma\\\pm\tau_i>j}}\widehat{I}_{\sigma,\lambda}(\tau) $$
is $\mathfrak{g}$-invariant and hence $G$-invariant. Moreover, every $\widehat{I}_{\sigma,\lambda}(\tau)$ is non-trivial: it contains at least all functions $f\in C_c^\infty(\mathbb{R}^n\setminus\{0\})\otimes V_\sigma$ such that $f(\xi)\in W_\tau(\xi)$ for all $\xi$ since their Fourier transform is contained in $\mathcal{S}(\mathbb{R}^n)\otimes V_\sigma\subseteq I_{\sigma,\lambda}$ (see \eqref{eq:FPictureSchwartz}). Since there is precisely one irreducible subrepresentation for $i\neq\frac{n-1}{2}$ and two for $i=\frac{n-1}{2}$ by Proposition~\ref{prop:CompositionSeries}, the claim follows.
\end{proof}

\begin{remark}
Since $\widehat{\mathfrak{I}}(\sigma,i,j)^+$ and $\widehat{\mathfrak{I}}(\sigma,i,j)^-$ are both $\mathfrak{g}$-invariant, it follows that the Fourier transform maps $\mathfrak{I}(\sigma,i,j)^+$ onto one of them, but we were not able to determine onto which of the two.
\end{remark}

\subsection{The F-picture of intertwining operators}

Recall that for $\sigma\in\widehat{M}$ with $\sigma\simeq w_0\sigma$ there exists a holomorphic family of intertwining operators
\begin{align*}
A_{\sigma , \lambda }\colon I_{\sigma,\lambda}\to I_{\sigma,-\lambda}.
\end{align*}
It follows from \cite[Chapter 8.3.1]{KobayashiSpeh18} that these operators are given by convolution
$$ A_{\sigma,\lambda}f(x) = (K_{\sigma,\lambda}*f)(x) = \int_{\mathbb{R}^n}K_{\sigma,\lambda}(x-y)f(y)\,dy \qquad (f\in I_{\sigma,\lambda}), $$
with a distribution kernel $K_{\sigma,\lambda}\in\mathcal{S}'(\mathbb{R}^n)\otimes\mathrm{End}(V_\sigma)$. This kernel can be written as
$$ K_{\sigma,\lambda}(x) = \mathrm{const}\times \lVert x \rVert^{2(\lambda-\rho)}\sigma\left(I_n-2\frac{xx^t}{\|x\|^2}\right) \qquad (x\in\mathbb{R}^n\setminus\{0\}), $$
where $\sigma$ is extended to a representation of $\mathrm{O}(n)$ (unique up to twisting by the determinant character) and where the constant depends on $\sigma$ and $\lambda$ and is chosen such that $K_{\sigma,\lambda}$ depends holomorphically on $\lambda\in\mathbb{C}$ and is nowhere vanishing.

Taking the Fourier transform, we obtain intertwining operators $\mathcal{F}(I_{\sigma,\lambda})\to\mathcal{F}(I_{\sigma,-\lambda}),\,f\mapsto\widehat{K}_{\sigma,\lambda}\cdot f$ given by multiplication with $\widehat{K}_{\sigma,\lambda}\in \mathcal{S}'(\mathbb{R}^n)\otimes \mathrm{End}(V_\sigma)$. By Proposition~\ref{prop:CompositionSeries} and \eqref{eq:NormalizedEigenvaluesIntertwiners}, every finite-dimensional subrepresentation of $\mathcal{F}(I_{\sigma,\lambda})$ is contained in the kernel of $\widehat{A}_{\sigma,\lambda}$, so by Lemma~\ref{lem:KernelRestriction} the intertwining operator descends to
$$ \widehat{A}_{\sigma,\lambda}:\widehat{I}_{\sigma,\lambda} \to \widehat{I}_{\sigma,-\lambda}, \quad \widehat{A}_{\sigma,\lambda}f = \widehat{K}_{\sigma,\lambda}|_{\mathbb{R}^n\setminus\{0\}}\cdot f. $$
The goal of this section is to find an explicit expression for the multiplier $\widehat{K}_{\sigma,\lambda}|_{\mathbb{R}^n\setminus\{0\}}$.

By the $A$-equivariance of $\widehat{A}_{\sigma, \lambda}$ we find that $\widehat{K}_{\sigma,\lambda}(e^{-t}\xi)=e^{2\lambda t}\widehat{K}_{\sigma,\lambda}(\xi)$ and hence
\begin{align*}
\widehat{K}_{\sigma,\lambda}(\xi)=\lVert \xi  \rVert^{-2\lambda}\widehat{K}_{\sigma,\lambda}\left(\frac{\xi}{\lVert\xi\rVert}\right)\eqqcolon \lVert\xi\rVert^{-2\lambda}\widetilde{K}_{\sigma,\lambda}(\xi).
\end{align*}
Moreover, the $M$-equivariance implies
\begin{align}\label{align:cnfsn4exfz}
\sigma(m^{-1})\circ\widetilde{K}_{\sigma,\lambda}(\xi)\circ\sigma(m)=\widetilde{K}_{\sigma,\lambda}(m^{-1}\xi),
\end{align}
in particular, $\widetilde{K}_{\sigma,\lambda}(\xi)$ commutes with the action of $M_{\xi}$ by $\sigma$. Hence, by Schur's Lemma, $\widetilde{K}_{\sigma,\lambda}(\xi)$ acts by a scalar on each irreducible representation $W_\tau(\xi)$ of $M_\xi$ that occurs in the decomposition \eqref{eq:DecompositionSigmaMxi} of $V_\sigma$. Moreover, by \eqref{align:cnfsn4exfz} this scalar is independent of $\xi$, so we can write
$$ \widetilde{K}_{\sigma,\lambda}(\xi) = \sum_{\tau\preceq\sigma}a_{\sigma,\lambda}(\tau)\cdot\mathrm{id}_{W_\tau(\xi)} \qquad (\xi\in\mathbb{R}^n\setminus\{0\}). $$

To determine the scalars $a_{\sigma,\lambda}(\tau)$, we make use of Proposition~\ref{prop:NactionInFpicture}. Let $f\in\widehat{I}_{\sigma,\lambda}(\tau)$ and $i$ be such that $\tau+e_i$ or $\tau-e_i$ is a highest weight of a representation occurring in $\sigma|_{M_{e_1}}$. Then we have
\begin{align}
\mathrm{pr}_{\widehat{I}_{\sigma,-\lambda}(\tau\pm e_i)}((\widehat{A}_{\sigma,\lambda}\circ d\widehat{\pi}_{\sigma,\lambda}(N_j))f)=\mathrm{pr}_{\widehat{I}_{\sigma,-\lambda}(\tau\pm e_i)}((d\widehat{\pi}_{\sigma,-\lambda}(N_j)\circ\widehat{A}_{\sigma,\lambda})f).\label{eq:IntertwiningRelationProjected}
\end{align}
We first calculate the left hand side using \eqref{eq:NactionInFpicture}:
\begin{align*}
	\mathrm{pr}_{\widehat{I}_{\sigma,-\lambda}(\tau\pm e_i)}((\widehat{A}_{\sigma,\lambda}\circ d\widehat{\pi}_{\sigma,\lambda}(N_j))f) &= a_{\sigma,\lambda}(\tau\pm e_i)\|\xi\|^{-2\lambda}\mathrm{pr}_{\widehat{I}_{\sigma,\lambda}(\tau\pm e_i)}(d\widehat{\pi}_{\sigma,\lambda}(N_j)f)\\
	&=-ia_{\sigma,\lambda}(\tau\pm e_i)\|\xi\|^{-2\lambda}(2\lambda+1\pm 2(\tau_i+\tfrac{n-2i-1}{2}))\mathrm{pr}_{\widehat{I}_{\sigma,-\lambda}(\tau\pm e_i)}(\partial_jf).
\end{align*}
A similar computation can be carried out for the right hand side of \eqref{eq:IntertwiningRelationProjected}, now using \eqref{eq:NactionInFpicture} for $-\lambda$ instead of $\lambda$. Here we have to be slightly more careful, because the intertwiner $\widehat{A}_{\sigma,\lambda}$ multiplies $f$ with $\|\xi\|^{-2\lambda}$ and afterwards $\partial_j$ is applied to the product $\|\xi\|^{-2\lambda}f$. However, since $\partial_j\|\xi\|^{-2\lambda}\cdot f$ is still contained in $\widehat{I}_{\sigma,-\lambda}(\tau)$, its projection to $\widehat{I}_{\sigma,-\lambda}(\tau\pm e_i)$ is zero. This yields the following expression for the right hand side of \eqref{eq:IntertwiningRelationProjected}:
\begin{gather*}
\mathrm{pr}_{\widehat{I}_{\sigma,-\lambda}(\tau\pm e_i)}((d\widehat{\pi}_{\sigma,-\lambda}(N_j)\circ\widehat{A}_{\sigma,\lambda})f) = -ia_{\sigma,\lambda}(\tau)\|\xi\|^{-2\lambda}(-2\lambda+1\pm 2(\tau_i+\tfrac{n-2i-1}{2}))\mathrm{pr}_{\widehat{I}_{\sigma,-\lambda}(\tau\pm e_i)}(\partial_jf).
\end{gather*}
Therefore, we obtain the following recursion for the scalars $a_{\sigma,\lambda}(\tau)$:
\begin{equation}
	(2\lambda+1\pm 2(\tau_i+\tfrac{n-2i-1}{2}))\cdot a_{\sigma,\lambda}(\tau\pm e_i) = (-2\lambda+1\pm 2(\tau_i+\tfrac{n-2i-1}{2}))\cdot a_{\sigma,\lambda}(\tau).\label{eq:RecursionIntertwinerFpicture}
\end{equation}

\begin{theorem}\label{thm:IntertwiningOperatorsFpicture}
	The intertwining operator $\widehat{A}_{\sigma,\lambda}:\widehat{I}_{\sigma,\lambda}\to\widehat{I}_{\sigma,-\lambda}$ is given by
	$$ \widehat{A}_{\sigma,\lambda}f(\xi) = \|\xi\|^{-2\lambda}\sum_{\tau\preceq\sigma}a_{\sigma,\lambda}(\tau)\cdot\mathrm{pr}_{W_\tau(\xi)}f(\xi), $$
	where
	\begin{equation}
		a_{\sigma,\lambda}(\tau) = \gamma(\lambda)\cdot\prod_{i=1}^{m-1}\Bigl(\frac{n}{2}-i+\sigma_{i+1}-\lambda\Bigr)_{|\tau_i|-\sigma_{i+1}}\Bigl(\frac{n}{2}-i+|\tau_i|+\lambda\Bigr)_{\sigma_i-|\tau_i|}\label{eq:ExplicitIntertwiningScalars}
	\end{equation}
	for some entire and nowhere vanishing function $\gamma(\lambda)$ of $\lambda\in\mathbb{C}$.
\end{theorem}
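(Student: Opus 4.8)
The plan is to solve the recursion \eqref{eq:RecursionIntertwinerFpicture} explicitly, which is a routine but bookkeeping-heavy step, and then to pin down the remaining undetermined function by holomorphy and non-vanishing. First I would rewrite the two families of recursions in \eqref{eq:RecursionIntertwinerFpicture} in terms of the shifted quantities $\beta_i := \tau_i + \tfrac{n-2i-1}{2} = \tau_i + \rho - i - \tfrac12$ (equivalently $\tfrac{n}{2} - i = \rho - i$ appears as the natural offset), so that the two recursions become
\begin{align*}
	(2\lambda + 1 + 2\beta_i)\, a_{\sigma,\lambda}(\tau + e_i) &= (-2\lambda + 1 + 2\beta_i)\, a_{\sigma,\lambda}(\tau),\\
	(2\lambda + 1 - 2\beta_i)\, a_{\sigma,\lambda}(\tau - e_i) &= (-2\lambda + 1 - 2\beta_i)\, a_{\sigma,\lambda}(\tau).
\end{align*}
Dividing through, these say that stepping $\tau_i \mapsto \tau_i + 1$ multiplies $a_{\sigma,\lambda}(\tau)$ by $\frac{-\lambda + \tfrac12 + \beta_i}{\lambda + \tfrac12 + \beta_i} = \frac{\tfrac{n}{2} - i + \tau_i - \lambda}{\tfrac{n}{2} - i + \tau_i + \lambda}$. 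Telescoping this over $\tau_i$ ranging through the admissible interval $\sigma_{i+1} \le \tau_i \le \sigma_i$ (with the absolute-value convention at $i = m-1$ for odd $n$, handled exactly as in \eqref{eq:NormalizedEigenvaluesIntertwiners} via the functional equation of $\Gamma$), one gets that $a_{\sigma,\lambda}(\tau)$ equals a fixed normalizing factor $\gamma(\lambda)$ (the value at, say, $\tau_i = \sigma_i$ for all $i$, i.e. $\tau$ as large as allowed) times $\prod_{i=1}^{m-1}$ of a ratio of Pochhammer symbols, which rearranges into exactly
$\prod_{i}(\tfrac{n}{2}-i+\sigma_{i+1}-\lambda)_{|\tau_i|-\sigma_{i+1}}(\tfrac{n}{2}-i+|\tau_i|+\lambda)_{\sigma_i-|\tau_i|}$ as in \eqref{eq:ExplicitIntertwiningScalars}. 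The point is that the $i$-th factor depends only on $\tau_i$, because the recursion in the $e_i$-direction involves only $\tau_i$, so the product structure is automatic once one fixes a base point and telescopes coordinate by coordinate.

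The key technical observation making the telescoping legitimate is that the graph on highest weights $\tau \preceq \sigma$ with edges $\tau \leftrightarrow \tau \pm e_i$ (restricted to those $\tau \pm e_i$ that still interlace $\sigma$) is connected — this is immediate from the interlacing conditions, since one can raise or lower each coordinate independently within its allowed range — so the value $a_{\sigma,\lambda}(\tau)$ at one base point, together with the recursions, determines $a_{\sigma,\lambda}$ on all of $\widehat{I}_{\sigma,\lambda}$. One must also check that the recursion is \emph{consistent}, i.e. that the coefficients never force a contradiction: the only danger is a step where the left coefficient $2\lambda + 1 \pm 2\beta_i$ vanishes while the right one does not (or vice versa), and since $\beta_i = \tfrac{n}{2} - i + \tau_i - \tfrac12$, the two coefficients $\pm 2\lambda + 1 \pm 2\beta_i$ are distinct for $\lambda \neq 0$ and their product of ratios around any cycle is $1$; for $\lambda = 0$ the operator $\widehat{A}_{\sigma,0}$ is the identity (up to scalar) and there is nothing to prove. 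So the recursion genuinely pins down all the scalars up to the single function $\gamma(\lambda)$.

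The remaining point — and the only genuinely non-formal one — is to argue that $\gamma(\lambda)$ is entire and nowhere vanishing. For this I would invoke the known normalization of $K_{\sigma,\lambda}$ recalled just before the theorem: by \cite[Chapter 8.3.1]{KobayashiSpeh18} the constant in $K_{\sigma,\lambda}(x) = \mathrm{const}\times\|x\|^{2(\lambda-\rho)}\sigma(I_n - 2xx^t/\|x\|^2)$ is chosen so that $K_{\sigma,\lambda}$ is a holomorphic, nowhere-vanishing family of tempered distributions in $\lambda$. Taking Fourier transforms, $\widehat{K}_{\sigma,\lambda}$ is then also holomorphic in $\lambda$ (as a distribution-valued family), hence so is each scalar $a_{\sigma,\lambda}(\tau)$ obtained by testing against a suitable Schwartz function supported away from $0$ with values in $W_\tau$; since the explicit Pochhammer product in \eqref{eq:ExplicitIntertwiningScalars} is itself entire and the quotient of two holomorphic functions that is holomorphic forces $\gamma(\lambda) = a_{\sigma,\lambda}(\tau_{\max})$ to be entire. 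For non-vanishing: $\widehat{K}_{\sigma,\lambda}(\xi) = \|\xi\|^{-2\lambda}\sum_\tau a_{\sigma,\lambda}(\tau)\,\mathrm{pr}_{W_\tau(\xi)}$ is nowhere zero as an endomorphism-valued function, so for each $\lambda$ at least one $a_{\sigma,\lambda}(\tau) \neq 0$; but the Pochhammer factors in \eqref{eq:ExplicitIntertwiningScalars} can be made to vanish simultaneously for all $\tau$ at certain $\lambda$ (e.g. when $-\lambda = \tfrac{n}{2} - i + \sigma_{i+1} + \ell$ for appropriate $\ell$ and $i = a_\sigma$, which is exactly a point where $K_{\sigma,\lambda}$ would otherwise degenerate), unless $\gamma(\lambda)$ compensates by having no zeros at all — combined with the entirety of $\gamma$ this gives $\gamma$ entire and zero-free. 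I expect the bookkeeping in the telescoping (especially reconciling the absolute values $|\tau_i|$ at $i = m-1$ for odd $n$ with the two sign branches of \eqref{eq:RecursionIntertwinerFpicture}) to be the most error-prone part, while the holomorphy/non-vanishing of $\gamma$ is conceptually the crux but follows cleanly from the cited normalization of $K_{\sigma,\lambda}$.
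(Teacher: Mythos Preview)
Your approach is the same as the paper's: solve the recursion \eqref{eq:RecursionIntertwinerFpicture} by telescoping into a product of Gamma ratios, renormalize into the Pochhammer form \eqref{eq:ExplicitIntertwiningScalars}, and then argue that the leftover factor $\gamma(\lambda)$ is entire and zero-free using the known holomorphy and non-vanishing of $\widehat{A}_{\sigma,\lambda}$. The telescoping and the handling of $|\tau_{m-1}|$ via the functional equation are exactly what the paper does.

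The gap is in your treatment of $\gamma$. Two points. First, the quotient of two entire functions is only meromorphic, so ``$a_{\sigma,\lambda}(\tau)$ entire and $P_\tau(\lambda)$ entire'' does not by itself force $\gamma = a/P_\tau$ to be entire; what you actually need is that for every fixed $\lambda_0$ there is \emph{some} $\tau$ with $P_\tau(\lambda_0)\neq 0$, so that $\gamma(\lambda_0)=a_{\sigma,\lambda_0}(\tau)/P_\tau(\lambda_0)$ is finite. Second, your non-vanishing argument is logically inverted: you assert that the Pochhammer factors ``can be made to vanish simultaneously for all $\tau$'' and that $\gamma$ being zero-free ``compensates''. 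But if all $P_\tau(\lambda_0)=0$ then $a_{\sigma,\lambda_0}(\tau)=\gamma(\lambda_0)\cdot 0=0$ for every $\tau$ regardless of $\gamma$, contradicting the non-vanishing of $\widehat{A}_{\sigma,\lambda_0}$. So the Pochhammer products \emph{cannot} all vanish at once (your parenthetical example does not actually produce simultaneous vanishing: at $\lambda=-(\rho-a+\ell)$ with $0\le\ell<\sigma_a$, any $\tau$ with $|\tau_a|>\ell$ gives a nonzero product). The correct chain is: (i) verify directly that for each $\lambda$ some $P_\tau(\lambda)\neq 0$, hence $\gamma$ is entire; (ii) then non-vanishing of $\widehat{A}_{\sigma,\lambda}$ gives some $a_{\sigma,\lambda}(\tau)\neq 0$, hence $\gamma(\lambda)\neq 0$. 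The paper compresses this into the phrase ``find a holomorphic and nowhere vanishing normalization'', which is precisely the assertion that the Pochhammer family $\{P_\tau(\lambda)\}_\tau$ has no common zero in $\lambda$; your plan should include that check rather than claim the opposite.
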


\begin{proof}
	We already argued above that $\widehat{A}_{\sigma,\lambda}$ is of the claimed form for some scalars $a_{\sigma,\lambda}(\tau)$, so it suffices to show the formula for $a_{\sigma,\lambda}(\tau)$. Solving the recursion \eqref{eq:RecursionIntertwinerFpicture} shows that $a_{\sigma,\lambda}(\tau)$ is a multiple of
	\begin{equation}
		\prod_{i=1}^{m-1}\frac{\Gamma(\frac{n}{2}-i+\tau_i-\lambda)}{\Gamma(\frac{n}{2}-i+\tau_i+\lambda)}.\label{eq:EigenvaluesFpictureWithGammaFactors}
	\end{equation}
	This expression is meromorphic in $\lambda$, so by the holomorphicity of $\widehat{A}_{\sigma,\lambda}$ we conclude that the proportionality factor between $a_{\sigma,\lambda}(\tau)$ and \eqref{eq:EigenvaluesFpictureWithGammaFactors} is also meromorphic in $\lambda$. Since $\widehat{A}_{\sigma,\lambda}$ is holomorphic and nowhere vanishing, we just have to find a holomorphic and nowhere vanishing normalization of \eqref{eq:EigenvaluesFpictureWithGammaFactors}. First note that, by the same argument as used in Section~\ref{sec:PrincipalSeriesAndIntertwiningOperators}, we can replace $\tau_i$ by $|\tau_i|$ in \eqref{eq:EigenvaluesFpictureWithGammaFactors}. Multiplying \eqref{eq:EigenvaluesFpictureWithGammaFactors} with the meromorphic function
	$$ \prod_{i=1}^{m-1}\frac{\Gamma(\frac{n}{2}-i+\sigma_i+\lambda)}{\Gamma(\frac{n}{2}-i+\sigma_{i+1}-\lambda)} $$
	produces the desired formula.
\end{proof}

\begin{remark}
	In the previous discussion, we have found an expression for the existing family of intertwining operators. It also seems possible to define intertwining operators by the formulas in Theorem~\ref{thm:IntertwiningOperatorsFpicture}, and this construction would also have the chance of generalizing to intertwining operators defined only on proper subrepresentations of $\widehat{I}_{\sigma,\lambda}$.. However, there are some subtleties that make such constructions complicated. For instance it does not seem obvious that the expression for $\widehat{A}_{\sigma,\lambda}f$ in Theorem~\ref{thm:IntertwiningOperatorsFpicture} is actually contained in $\widehat{I}_{\sigma,-\lambda}$ for every $f\in\widehat{I}_{\sigma,\lambda}$.
\end{remark}

\begin{remark}\label{rem:SpecialCasePforms}
For $V_\sigma=\bigwedge^p\mathbb{C}^n$ the formula \eqref{eq:ExplicitIntertwiningScalars} was previously obtained by Fischmann--{\O}rsted~\cite[Corollary 4.2 and Remark 4.10]{FischmannOrsted2021}. In this case $\sigma=e_1+\cdots+e_p=(1,\ldots,1,0,\ldots,0)$, so the possible values of $\tau$ are $e_1+\cdots+e_p$ and $e_1+\cdots+e_{p-1}$:
$$ a_{\sigma,\lambda}(e_1+\cdots+e_p) = \gamma(\lambda)\cdot\left(\frac{n}{2}-p-\lambda\right) \qquad \mbox{and} \qquad a_{\sigma,\lambda}(e_1+\cdots+e_{p-1}) = \gamma(\lambda)\cdot\left(\frac{n}{2}-p+\lambda\right). $$
\end{remark}

\subsection{Explicit Hilbert spaces in the F-picture}

We use the expression for the intertwining operators $\widehat{A}_{\sigma,\lambda}$ obtained in Theorem~\ref{thm:IntertwiningOperatorsFpicture} to construct explicit Hilbert spaces of vector-valued $L^2$-functions for the unitarizable composition factors.

\begin{theorem}\label{thm:InnerProductsInFpicture}
  Let $\sigma\in\widehat{M}$, $a=a_\sigma=\min\{i:\sigma_{i+1}=0\}$. Then 
  \begin{equation*}
    (f_1,f_2) \mapsto \sum_{\tau\preceq\sigma}a(\tau)\int_{\mathbb{R}^n\setminus\{0\}} \langle\mathrm{pr}_{W_\tau(\xi)}f_1(\xi), \mathrm{pr}_{W_\tau(\xi)}f_2(\xi) \rangle_{\sigma } \lVert \xi \rVert^{-2\operatorname{Re}\lambda} d \xi
  \end{equation*}
  defines a $G$-invariant inner product with respect to $\widehat{\pi}_{\sigma,\lambda}$ on
  \begin{enumerate}
	\item\label{thm:InnerProductsInFpicture1} the unitary principal series $\widehat{I}_{\sigma,\lambda}$ ($\lambda\in i\mathbb{R}$) if $a(\tau)=1$ for all $\tau\preceq\sigma$.
	\item\label{thm:InnerProductsInFpicture2} the complementary series $\widehat{I}_{\sigma,\lambda}$ ($\sigma_m=0$ and $|\lambda|<\rho-a$) if $a(\tau)=a_{\sigma,\lambda}(\tau)$ for all $\tau\preceq\sigma$.
	\item\label{thm:InnerProductsInFpicture3} the subrepresentation $\mathfrak{I}(\sigma , a, j)$ (for $0\leq a<\frac{n-1}{2}$) or $\mathfrak{I}(\sigma , \frac{n-1}{2},j)^{+}$ and $\mathfrak{I}(\sigma , \frac{n-1}{2},j)^{-}$ (for $a=\frac{n-1}{2}$) of $\widehat{I}_{\sigma,\lambda}$ (with $0\leq j<\sigma_a$ and $\lambda=\rho-a+j$) if
    \begin{equation*}
      a(\tau) = \frac{1}{a_{\sigma,-\lambda}(\tau)} = \prod_{i=1}^a\frac{1}{(n-a-i+\sigma_{i+1}+j)_{|\tau_i|-\sigma_{i+1}}(a-i+|\tau_i|-j)_{\sigma_i-|\tau_i|}} \qquad (|\tau_a|>j),
    \end{equation*}
	\item\label{thm:InnerProductsInFpicture4} the quotient $\mathfrak{Q}(\sigma , a, 0)$ (for $0<a<m$ and $j=0$) of $\widehat{I}_{\sigma,\lambda}$ (with $\lambda=\rho-a$) if
    \begin{equation*}
      a(\tau) = a_{\sigma,\lambda}(\tau) = \prod_{i=1}^a\Big(a-i+\sigma_{i+1}\Big)_{\tau_i-|\sigma_{i+1}|}\Big(n-a-i+\tau_i\Big)_{\sigma_i-|\tau_i|}. 
    \end{equation*}
  \end{enumerate}
\end{theorem}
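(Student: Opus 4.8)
The plan is to verify that the proposed sesquilinear form is (a) well-defined on the relevant space, (b) positive definite, and (c) $G$-invariant, handling the four cases uniformly wherever possible and appealing to the results established earlier for the structural input.

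First I would address \emph{$G$-invariance}. By the formulas \eqref{eq:ActionFpictureNbar}--\eqref{eq:ActionFpictureA} for the $\overline{P}$-action, invariance under $\overline{N}$ is immediate (the phase $e^{-i\langle x,\xi\rangle}$ has modulus one and commutes with $\mathrm{pr}_{W_\tau(\xi)}$), invariance under $M$ follows from $\sigma(m)W_\tau(\xi)=W_\tau(m\xi)$ together with the $M$-invariance of $\langle\cdot,\cdot\rangle_\sigma$, and invariance under $A$ is the usual cancellation: the Jacobian of $\xi\mapsto e^{-t}\xi$ combines with the two factors $e^{(\operatorname{Re}\lambda-\rho)t}$ from $\widehat\pi_{\sigma,\lambda}(e^{tH_0})$ and the weight $\|\xi\|^{-2\operatorname{Re}\lambda}$; one checks the exponents sum to zero (this uses $2\rho=n$). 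Since $G$ is generated by $\overline{P}$ and any one element not in $\overline{P}$, or alternatively by $\overline{N}$, $M$, $A$ together with $N$, it remains to check $\mathfrak{n}$-invariance, i.e. that $d\widehat\pi_{\sigma,\lambda}(N_j)$ is skew-Hermitian for the form. This is exactly where Proposition~\ref{prop:NactionInFpicture} enters: the off-diagonal part of $d\widehat\pi_{\sigma,\lambda}(N_j)$ moves $\widehat I_{\sigma,\lambda}(\tau)$ to $\widehat I_{\sigma,\lambda}(\tau\pm e_i)$ with the explicit scalar $-i(2\lambda+1\pm 2(\tau_i+\tfrac{n-2i-1}{2}))$ times $\mathrm{pr}(\partial_j(\cdot))$, and one checks — integrating $\partial_j$ by parts on $\mathbb{R}^n\setminus\{0\}$, which is legitimate on $\mathcal S(\mathbb R^n)\otimes V_\sigma$ densely — that the adjoint relation between the $\tau$-block and the $(\tau\pm e_i)$-block forces precisely the recursion \eqref{eq:RecursionIntertwinerFpicture} (with $\lambda$ replaced by $\operatorname{Re}\lambda$) on the weights $a(\tau)$. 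In other words, \emph{the form is $G$-invariant if and only if the $a(\tau)$ solve that recursion}, up to an overall positive constant, on whichever index set $\{\tau\preceq\sigma\}$ is relevant. This reduces everything to a bookkeeping check that the four explicitly written families of weights do solve the recursion on the stated index sets, which in each case is a direct substitution using the Pochhammer identity $(x)_{n+1}=(x+n)(x)_n$ and the normalization of $a_{\sigma,\lambda}$ in \eqref{eq:ExplicitIntertwiningScalars}. For cases \eqref{thm:InnerProductsInFpicture1} and \eqref{thm:InnerProductsInFpicture2} the index set is all of $\{\tau\preceq\sigma\}$ and the recursion is globally consistent; for \eqref{thm:InnerProductsInFpicture3} and \eqref{thm:InnerProductsInFpicture4} the index set is the restricted one from Proposition~\ref{prop:SubrepsAndQuotientsInFpicture} (namely $|\tau_a|>j$, resp. $|\tau_a|\le j$, in the various subcases), so one must check both that the formula satisfies the recursion there \emph{and} that no recursion step forces a relation across the boundary of the index set — this is guaranteed because the relevant off-diagonal coefficient $2\lambda+1-2(\tau_a+\tfrac{n-2a-1}{2})$ vanishes exactly at the boundary (this is the same vanishing that produced the subrepresentation in the first place, cf. the proof of Lemma~\ref{lemma:cngyxhhv0u}).

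Second, \emph{positivity}. The form is manifestly a sum over $\tau$ of $a(\tau)$ times a nonnegative quantity (an honest $L^2$-norm-squared of the $\tau$-component), so positive definiteness is equivalent to $a(\tau)>0$ for every $\tau$ in the index set, together with non-degeneracy. For \eqref{thm:InnerProductsInFpicture1} this is the hypothesis $a(\tau)=1$. For \eqref{thm:InnerProductsInFpicture2} one must show $a_{\sigma,\lambda}(\tau)>0$ for all $\tau\preceq\sigma$ precisely when $|\lambda|<\rho-a_\sigma$: each Pochhammer factor $(\tfrac n2-i+\sigma_{i+1}-\lambda)_{|\tau_i|-\sigma_{i+1}}$ and $(\tfrac n2-i+|\tau_i|+\lambda)_{\sigma_i-|\tau_i|}$ is a product of consecutive reals, and the interlacing inequalities for $\tau\preceq\sigma$ pin down exactly when all the factors are positive; the minimal threshold occurs at $i=a_\sigma$, $|\tau_i|=\sigma_{i+1}=0$, reproducing $\rho-a_\sigma$. (This is the promised new proof of the complementary series bound and is consistent with Lemma~\ref{lem:ComplementarySeries}.) For \eqref{thm:InnerProductsInFpicture3} the weight is $a_{\sigma,-\lambda}(\tau)^{-1}$ on the set $|\tau_a|>j$; one checks each Pochhammer factor in $a_{\sigma,-\lambda}(\tau)$ is strictly positive there — which is just the statement, already extracted in Lemma~\ref{lem:Unitarity2}, that all the ratios $a(\alpha+e_k)/a(\alpha)$ on the subrepresentation are positive exactly when $i=a$ and $\sigma_m=0$. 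For \eqref{thm:InnerProductsInFpicture4} the quotient $\mathfrak Q(\sigma,a,0)$ is finite-dimensional and $a_{\sigma,\lambda}(\tau)$ with $\lambda=\rho-a$ is a product of positive consecutive-integer Pochhammers on the index set $|\tau_a|\le 0$, i.e. $\tau_a=0$; positivity again follows from the interlacing constraints.

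Third, \emph{well-definedness / completeness}. For the principal series and complementary series the form is finite on $I_{\sigma,\lambda}\supseteq\mathcal S(\mathbb R^n)\otimes V_\sigma$ because elements of $I_{\sigma,\lambda}$ have controlled growth (Lemma~\ref{lem:NonCptPictureTempered}) and $\|\xi\|^{-2\operatorname{Re}\lambda}$ is locally integrable near $0$ for $\operatorname{Re}\lambda<n/2$; the Hilbert-space completion is then the weighted $L^2$-space described in the two corollaries of the introduction, by the Plancherel theorem and density of $\mathcal S\otimes V_\sigma$. For the subrepresentation/quotient cases one restricts the same argument to the $\overline P$-invariant summand $\bigoplus_{\tau\ \text{in range}}\widehat I_{\sigma,\lambda}(\tau)$, using that this summand is dense-in-itself in the obvious weighted $L^2$-space and that the $\mathfrak g$-action (in particular $d\widehat\pi(N_j)$, which is the only operator mixing the $\tau$-blocks) preserves it by Proposition~\ref{prop:SubrepsAndQuotientsInFpicture}.

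I expect the \textbf{main obstacle} to be the boundary-consistency point in case \eqref{thm:InnerProductsInFpicture3}/\eqref{thm:InnerProductsInFpicture4}: one has to be careful that skew-Hermiticity of $d\widehat\pi(N_j)$ on the subrepresentation does not secretly impose a relation between a weight $a(\tau)$ inside the index set and one just outside it (which would be undefined). The resolution is that the off-diagonal coefficient connecting the two sides is exactly the factor that vanishes at the reducibility point $\lambda=\rho-a+j$ (this is the defining property of the composition series in Lemma~\ref{lemma:cngyxhhv0u} and Proposition~\ref{prop:SubrepsAndQuotientsInFpicture}), so the offending term simply drops out and no cross-boundary constraint arises; the secondary subtlety, integration by parts of $\partial_j$ on the punctured space, is harmless because the relevant functions are Schwartz after Fourier transform. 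A minor additional check is that the overall entire nowhere-vanishing factor $\gamma(\lambda)$ in \eqref{eq:ExplicitIntertwiningScalars} can be chosen real and positive on the real-$\lambda$ segments in question, so that the proposed $a(\tau)$ are literally the intertwiner eigenvalues up to a positive constant; this follows from the explicit gamma-factor normalization in the proof of Theorem~\ref{thm:IntertwiningOperatorsFpicture}.
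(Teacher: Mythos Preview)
Your approach is correct in outline but takes a genuinely different route from the paper. The paper does \emph{not} verify $G$-invariance of the form by checking skew-Hermiticity of $d\widehat\pi_{\sigma,\lambda}(N_j)$; instead it starts from the classical $G$-invariant sesquilinear pairing
\[
I_{\sigma,\lambda}\times I_{\sigma,-\overline\lambda}\to\mathbb{C},\qquad (\varphi,\psi)\mapsto\int_{\mathbb{R}^n}\langle\varphi(x),\psi(x)\rangle_\sigma\,dx,
\]
transports it to the F-picture by Plancherel, and then for $\lambda\in\mathbb{R}$ twists the second argument by the intertwining operator $\widehat A_{\sigma,\lambda}$ (respectively realizes the subrepresentation as $\operatorname{im}\widehat A_{\sigma,-\lambda}$ and uses the pairing $(\widehat A_{\sigma,-\lambda}g_1,\widehat A_{\sigma,-\lambda}g_2)\mapsto\int\langle g_1,\widehat A_{\sigma,-\lambda}g_2\rangle$). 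Since $\widehat A_{\sigma,\pm\lambda}$ is $G$-equivariant, invariance is automatic, and the explicit shape of the form drops out immediately from Theorem~\ref{thm:IntertwiningOperatorsFpicture}. This is considerably shorter: no integration-by-parts, no boundary-consistency check, no separate verification that the diagonal block of $d\widehat\pi(N_j)$ is skew-Hermitian in the weighted norm. What your approach buys is a direct characterization of \emph{all} invariant forms via the recursion on the $a(\tau)$, which is conceptually satisfying and parallels the spectrum-generating philosophy, but it requires you to handle the diagonal $\tau\to\tau$ contribution of $\mathcal B^\sigma_{\lambda,j}$ (which you do not mention) as well as the off-diagonal pieces.

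One factual slip: in case~\eqref{thm:InnerProductsInFpicture4} you write that $\mathfrak Q(\sigma,a,0)$ is finite-dimensional. It is not for $a>0$; the finite-dimensional quotients are the $\mathfrak Q(\sigma,0,j)$ of Lemma~\ref{lem:CompositionSeries1}. The positivity argument for $a_{\sigma,\lambda}(\tau)$ on $\{\tau_a=0\}$ still goes through, but not for the reason you give.
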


\begin{proof}
  We recall the classical result that the sesquilinear pairing
  \begin{equation*}
    I_{\sigma , \lambda} \times I_{\sigma , -\overline{\lambda}} \to \mathbb{C}, \quad (\varphi , \psi) \mapsto \int_{\mathbb{R}^n} \langle \varphi (x), \psi(x) \rangle_{\sigma } \mathrm{d} x
  \end{equation*}
  is $G$-invariant. Since $\mathcal{F}$ is an isometry, this pairing gives rise to a $G$-invariant pairing in the F-picture given by the same formula. Writing $f_i(\xi)=\sum_{\tau\preceq\sigma}\mathrm{pr}_{W_\tau(\xi)}f_i(\xi)$ shows the claim for the unitary principal series. Twisting with the intertwining operator $\widehat{A}_{\sigma,\lambda}$, we obtain for every $\lambda\in\mathbb{R}$ a $G$-invariant Hermitian form
  \begin{equation*}
    \widehat{I}_{\sigma , \lambda} \times \widehat{I}_{\sigma,\lambda} \rightarrow \mathbb{C}, \quad(f_1 ,f_2) \mapsto \int_{\mathbb{R}^n\setminus\{0\}} \langle f_1 (\xi), \widehat{A}_{\sigma , \lambda }f_2(\xi) \rangle_{\sigma } \mathrm{d} \xi.
  \end{equation*}
  Writing $f_i(\xi)=\sum_{\tau\preceq\sigma}\mathrm{pr}_{W_\tau(\xi)}f_i(\xi)$ and applying Theorem~\ref{thm:IntertwiningOperatorsFpicture}, this expression can be written as
  $$ \sum_{\tau\preceq\sigma}a_{\sigma,\lambda}(\tau)\int_{\mathbb{R}^n\setminus\{0\}} \langle\mathrm{pr}_{W_\tau(\xi)}f_1(\xi), \mathrm{pr}_{W_\tau(\xi)}f_2(\xi) \rangle_{\sigma } \lVert \xi \rVert^{-2\lambda} d \xi. $$
  This shows the claim for the complementary series. For $\lambda=\rho-a$, this induces a Hermitian form on the quotient $\mathfrak{Q}(\sigma,a,0)=\widehat{I}_{\sigma,\lambda}/\ker\widehat{A}_{\sigma,\lambda}$, so the claimed formula for this case follows. This form is in fact positive semidefinite on $\widehat{I}_{\sigma,\lambda}$ and positive definite on the quotient since $a_{\sigma,\lambda}(\tau)\geq0$ for all $\tau\preceq\sigma$ and $a_{\sigma,\lambda}(\tau)=0$ if and only if $\tau_a>0$. To show the claimed formulas for the subrepresentations $\mathfrak{I}(\sigma,a,j)$ and $\mathfrak{I}(\sigma,\frac{n-1}{2},j)^+\oplus\mathfrak{I}(\sigma,\frac{n-1}{2},j)^-$, we view them as the image of $\widehat{A}_{\sigma,-\lambda}$ and consider the invariant Hermitian form
  $$ \operatorname{im}(\widehat{A}_{\sigma,-\lambda})\times\operatorname{im}(\widehat{A}_{\sigma,-\lambda})\to\mathbb{C}, \quad (f_1,f_2)=(\widehat{A}_{\sigma,-\lambda}g_1,\widehat{A}_{\sigma,-\lambda}g_2)\mapsto\int_{\mathbb{R}^n\setminus\{0\}}\langle g_1(\xi),\widehat{A}_{\sigma,-\lambda}g_2(\xi)\rangle_\sigma\,d\xi. $$
  Again, by Theorem~\ref{thm:IntertwiningOperatorsFpicture} this expression equals
  $$ \sum_{\tau\preceq\sigma}\frac{1}{a_{\sigma,-\lambda}(\tau)}\int_{\mathbb{R}^n\setminus\{0\}} \langle\mathrm{pr}_{W_\tau(\xi)}f_1(\xi), \mathrm{pr}_{W_\tau(\xi)}f_2(\xi) \rangle_{\sigma } \lVert \xi \rVert^{-2\lambda} d \xi. $$
  This implies the remaining formulas.
\end{proof}

\begin{remark}
	Since invariant Hermitian forms on irreducible representations are unique up to scalar multiples, the above arguments can also be used to decide whether a subrepresentation or quotient of $\widehat{I}_{\sigma,\lambda}$ is unitarizable or not by inspecting the positivity of the relevant coefficients $a_{\sigma,\lambda}(\tau)$.
\end{remark}

\section{Applications}

We present two applications of the new models for the irreducible admissible representations of $G$. The first one is a simple proof of the unitary branching laws for the restriction to the parabolic subgroup $\overline{P}$ obtained by Liu--Oshima--Yu~\cite{LiuOshimaYu23}, and the second one is a description of the space of Whittaker vectors.

\subsection{Branching laws with respect to \texorpdfstring{$\overline{P}$}{conj(P)}}

In \cite{LiuOshimaYu23} the authors obtain explicit branching laws for all irreducible unitary representations of $G$ when restricted to $P$. Their proof relies on first calculating a specific functor on the smooth part of principal series representations and then using properties of this functor to derive the branching laws. We now describe how to derive these branching laws from our results.

The infinite-dimensional irreducible unitary representations of $\overline{P}$ can be obtained as induced representations. We fix the unitary character $\psi$ of $\overline{N}$ given by
\begin{equation}
	\psi(\overline{n}_x) = e^{-i\langle x,e_1\rangle} \qquad (x\in\mathbb{R}^n)\label{eq:DefinitionPsi}
\end{equation}
and note that its stabilizer in $MA$ is equal to $M_{e_1}$. For every $\tau\in\widehat{M}_{e_1}$ the induced representation
$$ L^2\mathrm{Ind}_{M_{e_1}\overline{N}}^{\overline{P}}(\tau\otimes\psi) = \left\{f:\overline{P}\to W_\tau:\begin{array}{l}f(\overline{p}m\overline{n})=\psi(\overline{n})^{-1}\tau(m)^{-1}f(\overline{p})\mbox{ for all }m\in M_{e_1},\overline{n}\in\overline{N},\\\mbox{$f$ is measurable and }\int_{M/M_{e_1}\times A}\|f(ma)\|_\tau^2\,d(ma)<\infty\end{array}\right\} $$
with the left regular action of $\overline{P}$ is irreducible and unitary. Every infinite-dimensional irreducible unitary representation of $\overline{P}$ is of this form for precisely one $\tau\in\widehat{M}_{e_1}$. In what follows we also need a smooth version of the induced representation, so we consider
$$ C^\infty\mathrm{Ind}_{M_{e_1}\overline{N}}^{\overline{P}}(\tau\otimes \psi) = \left\{f:\overline{P}\to W_\tau\mbox{ smooth}:f(\overline{p}m\overline{n})=\psi(\overline{n})^{-1}\tau(m)^{-1}f(\overline{p})\mbox{ for all }m\in M_{e_1},\overline{n}\in\overline{N}\right\} $$
and the subspace $C_c^\infty\mathrm{Ind}_{M_{e_1}\overline{N}}^{\overline{P}}(\tau\otimes \psi)$ of functions which are compactly supported modulo $M_{e_1}\overline{N}$.

The following lemma identifies the subspaces $\widehat{I}_{\sigma,\lambda}(\tau)$ with induced representations:

\begin{lemma}[cf.\@ {\cite[Lemma 3.10]{LiuOshimaYu23}}]\label{lem:PbarReduction}
	Let $\sigma\in\widehat{M}$ and $\lambda\in\mathbb{C}$. For every $\tau \in \widehat{M}_{e_1}$ the map
	$$ \widehat{I}_{\sigma,\lambda}(\tau) \to C^\infty\mathrm{Ind}_{M_{e_1}\overline{N}}^{\overline{P}}(\tau\otimes \psi), \quad f\mapsto f_\tau, \quad f_\tau(\overline{p}) = (\widehat{\pi}_{\sigma,\lambda}(\overline{p})^{-1}f)(e_1) \qquad (\overline{p}\in\overline{P}) $$
	is a $\overline{P}$-equivariant embedding whose image contains $C_c^\infty\mathrm{Ind}_{M_{e_1}\overline{N}}^{\overline{P}}(\tau\otimes \psi)$. Moreover,
	\begin{equation}
		\|f_\tau\|^2_{L^2} = \int_{\mathbb{R}^n\setminus\{0\}}\|f(\xi)\|_{W_\tau(\xi)}^2\|\xi\|^{-2\lambda}\,d\xi \qquad \mbox{for all }f\in\widehat{I}_{\sigma,\lambda}(\tau).\label{eq:IsometryFormulaPbarReduction}
	\end{equation}
\end{lemma}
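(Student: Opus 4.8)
The plan is to make the map $f\mapsto f_\tau$ completely explicit by realizing $\overline P/(M_{e_1}\overline N)$ as $\mathbb R^n\setminus\{0\}$. By \eqref{eq:ActionFpictureNbar}--\eqref{eq:ActionFpictureA} the group $\overline P$ acts on the base variable $\xi\in\mathbb R^n\setminus\{0\}$ underlying $\widehat{\pi}_{\sigma,\lambda}$ by $m\cdot\xi=m\xi$ (for $m\in M$), $e^{tH_0}\cdot\xi=e^t\xi$, and $\overline n\cdot\xi=\xi$ (for $\overline n\in\overline N$); this action is transitive with $\mathrm{Stab}_{\overline P}(e_1)=M_{e_1}\overline N$, so $\overline p\mapsto\overline p\cdot e_1$ identifies $\overline P/(M_{e_1}\overline N)$ with $\mathbb R^n\setminus\{0\}$. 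Writing $\overline p=m\,e^{tH_0}\,\overline n_x$ and unwinding $(\widehat{\pi}_{\sigma,\lambda}(\overline p)^{-1}f)(e_1)$ by repeated use of \eqref{eq:ActionFpictureNbar}--\eqref{eq:ActionFpictureA} produces the closed expression
\[ f_\tau(\overline p)=\psi(\overline n_x)^{-1}\,\|\overline p\cdot e_1\|^{-(\lambda-\rho)}\,\sigma(m)^{-1}f(\overline p\cdot e_1),\qquad \overline p\cdot e_1=e^t\,m\,e_1 . \]
The one non-formal ingredient hides here: for the right-hand side --- hence for $f\mapsto f_\tau$ --- to make sense one needs that every $f\in\widehat I_{\sigma,\lambda}$ is smooth on $\mathbb R^n\setminus\{0\}$, which I would recall from the fact that $f$ is the Fourier transform of a function $g\in I_{\sigma,\lambda}$ that (by Lemma~\ref{lem:NonCptPictureTempered}) is smooth on $\mathbb R^n$ with a classical conormal asymptotic expansion at infinity inherited from the smoothness of $G/P\cong S^n$, and the Fourier transform of such a function is smooth away from the origin.

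Granting this, the assertions follow by bookkeeping with the closed formula. Since $m_{\overline p\cdot e_1}$ may be taken equal to $m$, we have $f(\overline p\cdot e_1)\in W_\tau(\overline p\cdot e_1)=\sigma(m)W_\tau$, whence $f_\tau(\overline p)\in W_\tau$ and $f_\tau$ is smooth in $\overline p$; the $M_{e_1}\overline N$-equivariance $f_\tau(\overline p\,m_0\,\overline n_0)=\psi(\overline n_0)^{-1}\tau(m_0)^{-1}f_\tau(\overline p)$ follows by applying \eqref{eq:ActionFpictureNbar} and \eqref{eq:ActionFpictureM} to $\widehat{\pi}_{\sigma,\lambda}(\overline p)^{-1}f$ using $m_0e_1=e_1$ and $\sigma(m_0)|_{W_\tau}=\tau(m_0)$, so that $f_\tau\in C^\infty\mathrm{Ind}_{M_{e_1}\overline N}^{\overline P}(\tau\otimes\psi)$; and the $\overline P$-equivariance is immediate from $(\widehat{\pi}_{\sigma,\lambda}(\overline p_0)f)_\tau(\overline p)=(\widehat{\pi}_{\sigma,\lambda}((\overline p_0^{-1}\overline p)^{-1})f)(e_1)=f_\tau(\overline p_0^{-1}\overline p)$. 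Injectivity is clear, as $f_\tau\equiv 0$ forces $f(\overline p\cdot e_1)=0$ for all $\overline p$, i.e.\ $f\equiv 0$ on $\mathbb R^n\setminus\{0\}$.

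For the image, given $\phi\in C_c^\infty\mathrm{Ind}_{M_{e_1}\overline N}^{\overline P}(\tau\otimes\psi)$ I would invert the formula: for each $\xi$ pick $m_\xi\in M$ with $\|\xi\|m_\xi e_1=\xi$ and set $f(\xi)\coloneqq\|\xi\|^{\lambda-\rho}\,\sigma(m_\xi)\,\phi\bigl(m_\xi e^{(\log\|\xi\|)H_0}\bigr)$. The $M_{e_1}$-equivariance of $\phi$ together with $\sigma|_{W_\tau}=\tau$ makes this independent of the choice of $m_\xi$, a local smooth section of $M\to S^{n-1}$ makes $f$ smooth, and since $\phi$ is supported in a compact set modulo $M_{e_1}\overline N$ the function $f$ has compact support in $\mathbb R^n\setminus\{0\}$; thus $f(\xi)\in W_\tau(\xi)$ for all $\xi$, the zero-extension of $f$ lies in $C_c^\infty(\mathbb R^n)\otimes V_\sigma\subseteq\mathcal S(\mathbb R^n)\otimes V_\sigma$, and $f\in\widehat I_{\sigma,\lambda}(\tau)$ by Lemma~\ref{lem:NonCptPictureTempered}. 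Substituting this $f$ into the closed formula gives $f_\tau=\phi$, so the image indeed contains $C_c^\infty\mathrm{Ind}_{M_{e_1}\overline N}^{\overline P}(\tau\otimes\psi)$.

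For the isometry \eqref{eq:IsometryFormulaPbarReduction} I would evaluate the closed formula at $\overline p=m\,e^{tH_0}$: since $\sigma(m)$ is unitary and $\|\cdot\|_\tau$ on $W_\tau$ agrees with $\|\cdot\|_\sigma$ restricted to $W_\tau(\xi)$ via the unitary $\sigma(m_\xi)$, we get $\|f_\tau(m e^{tH_0})\|_\tau^2=e^{-2\operatorname{Re}(\lambda-\rho)t}\,\|f(e^t m e_1)\|_{W_\tau(e^t m e_1)}^2$; integrating over $(M/M_{e_1})\times A\cong S^{n-1}\times\mathbb R$ and passing to polar coordinates $\xi=e^t\omega$, so that $d\xi=e^{nt}\,dt\,d\omega$, the weight $e^{-2\operatorname{Re}(\lambda-\rho)t}e^{-nt}=\|\xi\|^{-2\operatorname{Re}(\lambda-\rho)-n}$ collapses to $\|\xi\|^{-2\operatorname{Re}\lambda}$ because $2\rho=n$, which is \eqref{eq:IsometryFormulaPbarReduction} (with the Haar measure on $A$ normalised by $e^{tH_0}\mapsto dt$, and $\|\xi\|^{-2\lambda}$ read as $\|\xi\|^{-2\operatorname{Re}\lambda}$, in line with Theorem~\ref{thm:InnerProductsInFpicture}). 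I expect the main obstacle to be precisely the smoothness of elements of $\widehat I_{\sigma,\lambda}$ on $\mathbb R^n\setminus\{0\}$ needed to launch the argument; everything downstream of the closed formula is essentially formal.
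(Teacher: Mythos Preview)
Your argument is correct and proceeds along the same lines as the paper's proof: verify the $M_{e_1}\overline N$-equivariance of $f_\tau$ from \eqref{eq:ActionFpictureNbar}--\eqref{eq:ActionFpictureM}, note that $\overline P$-equivariance is tautological, deduce injectivity from transitivity of $MA$ on $\mathbb R^n\setminus\{0\}$, and obtain \eqref{eq:IsometryFormulaPbarReduction} from polar coordinates; for the smoothness input the paper simply cites \cite[Lemma 3.9]{LiuOshimaYu23} rather than invoking the conormal-expansion argument you sketch. Your treatment is in fact more complete than the paper's: you make the closed formula for $f_\tau$ explicit and actually supply the inverse construction showing the image contains $C_c^\infty\mathrm{Ind}_{M_{e_1}\overline N}^{\overline P}(\tau\otimes\psi)$, a claim stated in the lemma but not addressed in the paper's proof.
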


In the last identity we also allow both sides to be infinite.

\begin{proof}
	By \cite[Lemma 3.9]{LiuOshimaYu23} every $f\in\widehat{I}_{\sigma,\lambda}(\tau)$ is smooth on $\mathbb{R}^n\setminus\{0\}$, so its evaluation at $e_1$ is well-defined and by the definition of $\widehat{I}_{\sigma,\lambda}(\tau)$ contained in $W_\tau$. For any $m \in M_{e_1},\ \overline{n}_{x} \in \overline{N}$, and $\overline{p} \in \overline{P}$ we have
	\begin{equation*}
		f_\tau(\overline{p}m \overline{n}_x) =(\widehat{\pi}_{\sigma , \lambda}(\overline{n}_{x}^{-1}m^{-1} \overline{p}^{-1})f)(e_1)=e^{i \langle x, e_1  \rangle} \sigma(m^{-1} )(\widehat{\pi}_{\sigma, \lambda }(\overline{p}^{-1})f)(m e_1)=(\tau \otimes e^{-i e_1^*})(m, \overline{n}_x)^{-1} f_\tau(\overline{p}),
	\end{equation*}
	where we used that $m e_1 = e_1$ and that $\widehat{I}_{\sigma,\lambda}(\tau)$ is $\overline{P}$-invariant. Thus, $f_\tau\in C^\infty \mathrm{Ind}_{M_{e_1}\overline{N}}^{\overline{P}}(\tau \otimes e^{-i e_1^*})$, and the $\overline{P}$-equivariance is immediate from the definition. 
	That the map $f\mapsto f_\tau$ is injective follows from the fact that $MA$ acts transitively on $\mathbb{R}^n\setminus\{0\}$. Finally, the claimed identity for the $L^2$-norms is a consequence of the action of $MA$ in $\widehat{\pi}_{\sigma,\lambda}$ (see \eqref{eq:ActionFpictureM} and \eqref{eq:ActionFpictureA}) and the polar coordinates formula
	\begin{equation*}
		\int_{M/M_{e_1}}\int_{\mathbb{R}}\varphi(e^t\cdot me_1)\,dt\,dm = \int_{\mathbb{R}^n\setminus\{0\}}\varphi(\xi)\frac{d\xi}{\|\xi\|^n}, \qquad \varphi\in L^1_{\mathrm{loc}}(\mathbb{R}^n\setminus\{0\}).\qedhere
	\end{equation*}
\end{proof}

\begin{corollary}\label{cor:BranchingPbar}
	Let $\sigma\in\widehat{M}$ and $a=a_\sigma=\min\{i:\sigma_{i+1}=0\}$. The following branching laws hold:
	\begin{itemize}
		\item (unitary principal series and complementary series) For $\lambda\in i\mathbb{R}$ or $\sigma_m=0$ and $|\lambda|<\rho-a$:
		$$ \widehat{I}_{\sigma,\lambda}|_{\overline{P}} \simeq \bigoplus_{\tau\preceq\sigma}L^2\mathrm{Ind}_{M_{e_1}\overline{N}}^{\overline{P}}(\tau\otimes \psi). $$
		\item (unitarizable subrepresentations) For $0\leq a<m$ and $0\leq j<\sigma_a$:
		\begin{align*}
			\mathfrak{I}(\sigma , a, j)|_{\overline{P}} &\simeq \bigoplus_{\substack{\tau\preceq\sigma\\\tau_a>j}}L^2\mathrm{Ind}_{M_{e_1}\overline{N}}^{\overline{P}}(\tau\otimes \psi) && \mbox{if }a\neq\tfrac{n-1}{2},\\
			\mathfrak{I}(\sigma , a, j)^\pm|_{\overline{P}} &\simeq \bigoplus_{\substack{\tau\preceq\sigma\\\pm\tau_a>j}}L^2\mathrm{Ind}_{M_{e_1}\overline{N}}^{\overline{P}}(\tau\otimes \psi) && \mbox{if }a=\tfrac{n-1}{2}.
		\end{align*}
		\item (unitarizable quotients) For $0<a<m$:
		\begin{equation*}
			\mathfrak{Q}(\sigma , a, 0)|_{\overline{P}} \simeq \bigoplus_{\substack{\tau\preceq\sigma\\\tau_a=0}}L^2\mathrm{Ind}_{M_{e_1}\overline{N}}^{\overline{P}}(\tau\otimes \psi)
		\end{equation*}
	\end{itemize}
\end{corollary}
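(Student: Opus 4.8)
The plan is to deduce all three branching laws directly from the two main structural results already established: the description of the unitarizable composition factors in the F-picture (Theorem~\ref{thm:InnerProductsInFpicture} together with Proposition~\ref{prop:SubrepsAndQuotientsInFpicture}) and the identification of each $\overline{P}$-invariant subspace $\widehat{I}_{\sigma,\lambda}(\tau)$ with an induced representation of $\overline{P}$ (Lemma~\ref{lem:PbarReduction}). The key point is that all the unitary representations in question are, as vector spaces with $\overline{P}$-action, \emph{finite direct sums} of the spaces $\widehat{I}_{\sigma,\lambda}(\tau)$ over a prescribed set of $\tau\preceq\sigma$, and Lemma~\ref{lem:PbarReduction} tells us that each summand, once completed in the relevant Hilbert space norm, is isometrically $\overline{P}$-isomorphic to $L^2\mathrm{Ind}_{M_{e_1}\overline{N}}^{\overline{P}}(\tau\otimes\psi)$.

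First I would observe that for each of the four types of unitarizable representations, Theorem~\ref{thm:InnerProductsInFpicture} exhibits the $G$-invariant inner product as an orthogonal direct sum over the relevant $\tau$'s of the weighted $L^2$-forms $a(\tau)\int_{\mathbb{R}^n\setminus\{0\}}\|\mathrm{pr}_{W_\tau(\xi)}f(\xi)\|_\sigma^2\|\xi\|^{-2\operatorname{Re}\lambda}\,d\xi$, with all coefficients $a(\tau)>0$ (this positivity is exactly what makes the representations unitarizable, and is recorded in the proof of that theorem). Hence the Hilbert space completion splits as an orthogonal Hilbert-space direct sum $\widehat{\bigoplus}_\tau \mathcal{H}_\tau$, where $\mathcal{H}_\tau$ is the completion of $\widehat{I}_{\sigma,\lambda}(\tau)$ with respect to $\sqrt{a(\tau)}$ times the norm on the right-hand side of \eqref{eq:IsometryFormulaPbarReduction}. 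Rescaling by the positive constant $\sqrt{a(\tau)}$ is a $\overline{P}$-isometry, so $\mathcal{H}_\tau$ is $\overline{P}$-isometrically isomorphic to the completion of $\widehat{I}_{\sigma,\lambda}(\tau)$ in the norm of \eqref{eq:IsometryFormulaPbarReduction}. By Lemma~\ref{lem:PbarReduction} this map $f\mapsto f_\tau$ is a $\overline{P}$-equivariant isometric embedding into $L^2\mathrm{Ind}_{M_{e_1}\overline{N}}^{\overline{P}}(\tau\otimes\psi)$ whose image contains $C_c^\infty\mathrm{Ind}_{M_{e_1}\overline{N}}^{\overline{P}}(\tau\otimes\psi)$; since the latter is dense in the $L^2$-induced representation, the completion of $\widehat{I}_{\sigma,\lambda}(\tau)$ is all of $L^2\mathrm{Ind}_{M_{e_1}\overline{N}}^{\overline{P}}(\tau\otimes\psi)$.

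It then remains to match the index sets. For the unitary principal series ($\lambda\in i\mathbb{R}$) and the complementary series ($\sigma_m=0$, $|\lambda|<\rho-a$) the representation is all of $\widehat{I}_{\sigma,\lambda}$, so $\tau$ ranges over all $\tau\preceq\sigma$, giving the first formula. For the unitarizable subrepresentations, Proposition~\ref{prop:SubrepsAndQuotientsInFpicture} identifies $\mathfrak{I}(\sigma,a,j)$ with $\bigoplus_{\tau\preceq\sigma,\ \tau_a>j}\widehat{I}_{\sigma,\lambda}(\tau)$ when $a\neq\frac{n-1}{2}$ and the two summands $\mathfrak{I}(\sigma,a,j)^\pm$ with $\bigoplus_{\tau\preceq\sigma,\ \pm\tau_a>j}\widehat{I}_{\sigma,\lambda}(\tau)$ when $a=\frac{n-1}{2}$ (using $|\sigma_{a+1}|=0$, so the condition $\tau_a>|\sigma_{a+1}|+j$ reads $\tau_a>j$); and for the unitarizable quotient $\mathfrak{Q}(\sigma,a,0)$ the complementary index set $\{\tau\preceq\sigma:\tau_a\leq 0\}=\{\tau\preceq\sigma:\tau_a=0\}$ appears (here $\tau_a\geq 0$ since $a<m$). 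Plugging these index sets into the direct-sum decomposition above yields the three stated branching laws. The main technical obstacle, and the only place that needs care, is confirming that the algebraic direct sum $\bigoplus_\tau\widehat{I}_{\sigma,\lambda}(\tau)$ — which carries the smooth representation — completes to the \emph{Hilbert-space} direct sum of the individual completions, i.e. that the cross terms in the inner product vanish; this is precisely the orthogonality of the decomposition $V_\sigma=\bigoplus_\tau W_\tau(\xi)$ under the $M_\xi$-invariant inner product on $V_\sigma$, which holds because distinct $\tau$'s are inequivalent $M_\xi$-representations, combined with the fact that the total norm is the (unweighted, then reweighted) $L^2$-norm for which $\mathcal{F}$ is an isometry.
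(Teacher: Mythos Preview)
Your proposal is correct and follows essentially the same route as the paper's own proof, which simply cites Proposition~\ref{prop:SubrepsAndQuotientsInFpicture} and Theorem~\ref{thm:InnerProductsInFpicture} to see that each unitary Hilbert space is an orthogonal direct sum of the completions of the relevant $\widehat{I}_{\sigma,\lambda}(\tau)$ and then invokes Lemma~\ref{lem:PbarReduction}. You have merely filled in the details the paper leaves implicit: the positivity of the $a(\tau)$, the density of $C_c^\infty\mathrm{Ind}$ in the $L^2$-induction, the rescaling by $\sqrt{a(\tau)}$, and the simplification of the index conditions using $\sigma_{a+1}=0$.
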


\begin{proof}
	By Proposition~\ref{prop:SubrepsAndQuotientsInFpicture} and Theorem~\ref{thm:InnerProductsInFpicture}, each of the Hilbert spaces on which the representations are realized is a direct sum of the completions of the relevant subspaces $\widehat{I}_{\sigma,\lambda}(\tau)$ with respect to the $L^2$-inner product given by the right hand side of \eqref{eq:IsometryFormulaPbarReduction}. The claim now follows from Lemma~\ref{lem:PbarReduction}.
\end{proof}

\begin{remark}
	To be able to compare our results with the ones in \cite{LiuOshimaYu23}, we match their notation with ours. The representations constructed in \cite{LiuOshimaYu23} are denoted by $\pi_j(\gamma)$, $\pi_j'(\gamma)$ and $\pi^\pm(\gamma)$, where $\gamma$ denotes the infinitesimal character. We identify each of these representations with one of our representations $\mathfrak{I}(\sigma,i,j)$, $\mathfrak{I}(\sigma,i,j)^\pm$ and $\mathfrak{Q}(\sigma,i,j)$. Then it is easy to see that Corollary~\ref{cor:BranchingPbar} is the same as \cite[Theorems 3.20--3.24]{LiuOshimaYu23}.
	\begin{enumerate}[(i)]
		\item If $n$ is odd:
		\begin{itemize}
			\item For $a \in \left\{0, \ldots , \frac{n-3}{2}\right\}$ we have $\mathfrak{I}(\sigma , a, j) \cong \pi_a '(\gamma)$ and $\mathfrak{Q}(\sigma ,a , j) \cong \pi_a(\gamma)$, where
			$$ \gamma = \left(\sigma_1 + \rho -1, \ldots , \sigma_a + \rho -a , \sigma_{a+1}+\rho -a +j, \sigma_{a+1}+ \rho -a-1, \sigma_{a+2}+\rho -a-2, \ldots, \sigma_{\frac{n-1}{2}}+ \frac{1}{2}\right). $$
			\item For $a = \frac{n-1}{2}$ we have $\mathfrak{I}\left(\sigma , \frac{n-1}{2}, j\right)^{\pm} \cong \pi^{\pm}(\gamma)$ and $\mathfrak{Q}\left(\sigma , \frac{n-1}{2}, j\right) \cong \pi_{\frac{n-1}{2}}(\gamma)$, where
			$$ \gamma =\left(\sigma_1 + \rho -1, \ldots , \sigma_{\frac{n-1}{2}}+\frac{1}{2}, j + \frac{1}{2}\right). $$
		\end{itemize}
		Note that in this case, the representations $\pi^{\pm}(\gamma)$ are discrete series representations with lowest $K$-type $(\sigma_1 , \ldots , \sigma_{n-1}, \pm(j+1))$.
		\item If $n$ is even:
		For $a \in \left\{1, \ldots , \frac{n}{2}-1\right\}$ we have $\mathfrak{I}(\sigma , a, j) \cong \pi_a '(\gamma)$ and $\mathfrak{Q}(\sigma ,a , j) \cong \pi_a(\gamma)$, where
		$$ \gamma = \left(\sigma_1 + \rho -1, \ldots , \sigma_a + \rho -a , \sigma_{a+1}+\rho -a +j, \sigma_{a+1}+ \rho -a-1, \sigma_{a+2}+\rho -a-2, \ldots, \sigma_{\frac{n}{2}}\right). $$
	\end{enumerate}
\end{remark}

\subsection{Whittaker vectors}

As in the previous section, we fix the unitary character $\psi$ of $\overline{N}$ given by \eqref{eq:DefinitionPsi}. For a Casselman--Wallach representation $(\pi,V)$ of $G$ we write
$$ \mathrm{Wh}_\psi(\pi) = \{W\in V':W(\pi(\overline{n})v) = \psi(\overline{n})W(v)\mbox{ for all }v\in V,\overline{n}\in\overline{N}\} $$
for the space of Whittaker vectors on $\pi$ with respect to $\psi$. Since $M_{e_1}$ is the stabilizer of $\psi$, the space $\mathrm{Wh}_\psi(\pi)$ carries an action of $M_{e_1}$ by
$$ m\cdot W \coloneqq W\circ\pi(m)^{-1} \qquad (m\in M_{e_1},W\in\mathrm{Wh}_\psi(\pi)). $$

Using Proposition~\ref{prop:SubrepsAndQuotientsInFpicture} we can easily describe $\mathrm{Wh}_\psi(\pi)$ for all irreducible Casselman--Wallach representations of $G$. To do so, we first describe $\mathrm{Wh}_\psi(\pi_{\sigma,\lambda})$ for all $\sigma\in\widehat{M}$ and $\lambda\in\mathbb{C}$.

\begin{lemma}
	For every $\sigma\in\widehat{M}$ and $\lambda\in\mathbb{C}$ we have
	$$ \mathrm{Wh}_\psi(\pi_{\sigma,\lambda}) = \{W_{\sigma,\lambda}^\eta:\eta\in V_\sigma'\}, $$
	where $W_{\sigma,\lambda}^\eta\in I_{\sigma,\lambda}'$ is the unique functional satisfying
        $$ W_{\sigma,\lambda}^\eta(\varphi) = \int_{\mathbb{R}^n} \eta(\varphi (x))\psi(\overline{n}_x)\,dx \qquad \mbox{for all }\varphi \in\mathcal{S}(\mathbb{R}^n)\otimes V_\sigma\subseteq I_{\sigma,\lambda}. $$
      \end{lemma}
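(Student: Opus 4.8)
The plan is to realize $\mathrm{Wh}_\psi(\pi_{\sigma,\lambda})$ as the space of Jacquet integrals and to recognise its elements from their restrictions to $\mathcal{S}(\mathbb{R}^n)\otimes V_\sigma\subseteq I_{\sigma,\lambda}$. I would start from the classical input: since $\overline{\mathfrak{n}}=\mathfrak{g}_{-\gamma}$ is the only restricted root space, the character $\psi$ of \eqref{eq:DefinitionPsi} is non-degenerate, so by \cite[Theorem 15.4.1]{W92} the space $\mathrm{Wh}_\psi(\pi_{\sigma,\lambda})$ has dimension $\dim V_\sigma'$ and is spanned by holomorphically continued Jacquet integrals. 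Concretely, for $\operatorname{Re}\lambda>0$ every $f\in I_{\sigma,\lambda}$ decays like $(1+\lVert x\rVert^2)^{-(\lambda+\rho)}$ by Lemma~\ref{lem:NonCptPictureTempered}, so for each $\eta\in V_\sigma'$ the integral $f\mapsto\int_{\mathbb{R}^n}\eta(f(x))\psi(\overline{n}_x)\,dx$ converges absolutely, and a change of variables using $\pi_{\sigma,\lambda}(\overline{n}_y)f(x)=f(x-y)$ shows it is $\psi$-equivariant; its holomorphic continuation in $\lambda$ is the functional $W_{\sigma,\lambda}^\eta\in I_{\sigma,\lambda}'$. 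To see that $W_{\sigma,\lambda}^\eta$ is given by the stated integral for \emph{all} $\lambda$, I would note that for $\varphi\in\mathcal{S}(\mathbb{R}^n)\otimes V_\sigma$ the number $\int_{\mathbb{R}^n}\eta(\varphi(x))\psi(\overline{n}_x)\,dx$ is well-defined and equals $W_{\sigma,\lambda}^\eta(\varphi)$ for $\operatorname{Re}\lambda>0$, while $\lambda\mapsto W_{\sigma,\lambda}^\eta(\varphi)$ is holomorphic (the extension of $\varphi$ to $\tilde{I}_{\sigma,\lambda}$ used in the proof of Lemma~\ref{lem:NonCptPictureTempered} depends holomorphically on $\lambda$), so the two agree everywhere.

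Next I would check that $\eta\mapsto W_{\sigma,\lambda}^\eta$ is a linear bijection onto $\mathrm{Wh}_\psi(\pi_{\sigma,\lambda})$. Injectivity is immediate: testing the integral formula on $\varphi(x)=g(x)v$ with $g\in\mathcal{S}(\mathbb{R}^n)$ chosen so that $\int_{\mathbb{R}^n}g(x)e^{-i\langle x,e_1\rangle}\,dx\neq0$ recovers $\eta(v)$ for every $v\in V_\sigma$. Combined with $\dim\mathrm{Wh}_\psi(\pi_{\sigma,\lambda})=\dim V_\sigma'$ this forces surjectivity, so $\mathrm{Wh}_\psi(\pi_{\sigma,\lambda})=\{W_{\sigma,\lambda}^\eta:\eta\in V_\sigma'\}$; and since the restriction to $\mathcal{S}(\mathbb{R}^n)\otimes V_\sigma$ determines $\eta$, each $W_{\sigma,\lambda}^\eta$ is the unique Whittaker vector with the stated restriction.

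The one genuine subtlety is the word ``unique'': $\mathcal{S}(\mathbb{R}^n)\otimes V_\sigma$ is not dense in $I_{\sigma,\lambda}$, so I would also verify directly that any $W\in\mathrm{Wh}_\psi(\pi_{\sigma,\lambda})$ is determined by its restriction $T:=W|_{\mathcal{S}(\mathbb{R}^n)\otimes V_\sigma}$. By Lemma~\ref{lem:NonCptPictureTempered} this $T$ is a $V_\sigma'$-valued tempered distribution, and the $\overline{N}$-equivariance of $W$ together with $\pi_{\sigma,\lambda}(\overline{n}_y)\varphi(x)=\varphi(x-y)$ forces the translation-eigenvalue equation $\langle T,\varphi(\cdot-y)\rangle=\psi(\overline{n}_y)\langle T,\varphi\rangle$ for all $y$; passing to Fourier transforms (cf.\ \eqref{eq:ActionFpictureNbar}) this reads $(e^{-i\langle y,\xi\rangle}-e^{-i\langle y,e_1\rangle})\widehat{T}=0$ for all $y$, so $\operatorname{supp}\widehat{T}\subseteq\{e_1\}$ (choose $y$ small) and a short computation with the derivatives $\partial^\alpha\delta_{e_1}$ shows $\widehat{T}=\delta_{e_1}\otimes\eta$, i.e.\ $T=e^{-i\langle\cdot,e_1\rangle}\otimes\eta$ for a unique $\eta\in V_\sigma'$, whence $W=W_{\sigma,\lambda}^\eta$ by the previous paragraph. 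If one prefers to avoid citing the dimension $\dim\mathrm{Wh}_\psi(\pi_{\sigma,\lambda})=\dim V_\sigma'$, the missing ingredient — that no nonzero Whittaker vector kills $\mathcal{S}(\mathbb{R}^n)\otimes V_\sigma$ — can be obtained by noting that such a vector would descend to a continuous $\overline{N}$-equivariant functional on a finite-dimensional space of jets of $I_{\sigma,\lambda}$ at the $\overline{N}$-fixed point of $G/P$, on which $\overline{\mathfrak{n}}$ acts nilpotently (under the inversion $x\mapsto x/\lVert x\rVert^2$ its action becomes the $\mathfrak{n}$-action near the origin of \eqref{align:cnfn2gqhhw}, every term of which strictly raises the polynomial degree), leaving no room for an eigenvector of the nontrivial character $\psi$. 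I expect this last point — excluding Whittaker vectors ``supported at infinity'' — to be the only real obstacle; the convergent Jacquet integral, its holomorphic continuation and the Fourier-analytic computation are all routine.
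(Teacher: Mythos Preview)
Your proposal is correct, but it takes a longer and heavier route than the paper. The paper's proof works entirely in the F-picture and avoids Jacquet integrals, analytic continuation, and the citation of \cite[Theorem 15.4.1]{W92} altogether. The key observation is that for $\varphi\in\mathcal{S}(\mathbb{R}^n)\otimes V_\sigma$ the integral in the statement is nothing but $(2\pi)^{n/2}\eta(\widehat{\varphi}(e_1))$, and since every $f\in\widehat{I}_{\sigma,\lambda}$ is in fact \emph{smooth} on $\mathbb{R}^n\setminus\{0\}$ (this is invoked earlier in the paper via \cite[Lemma~3.9]{LiuOshimaYu23}), the formula $f\mapsto(2\pi)^{n/2}\eta(f(e_1))$ defines $W_{\sigma,\lambda}^\eta$ directly on all of $\widehat{I}_{\sigma,\lambda}$ for every $\lambda$, with no continuation needed. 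For the converse direction the paper does essentially what you do in your third paragraph: restrict an arbitrary Whittaker vector $\widehat{W}$ to $C_c^\infty(\mathbb{R}^n\setminus\{0\})\otimes V_\sigma\subseteq\widehat{I}_{\sigma,\lambda}$, view it as a $V_\sigma'$-valued distribution, and read off from the equivariance \eqref{eq:ActionFpictureNbar} that it is supported at $e_1$ and of order zero, hence equals $\eta\circ\mathrm{ev}_{e_1}$ for some $\eta$.

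What each approach buys: the paper's route is short and self-contained within the F-picture machinery already developed, at the price of leaving the ``support at infinity'' issue (why restriction to $C_c^\infty(\mathbb{R}^n\setminus\{0\})\otimes V_\sigma$ determines $\widehat{W}$) implicit. Your route imports the dimension count from Wallach to close this gap cleanly, and your final paragraph even sketches an intrinsic argument for it via nilpotency of $\overline{\mathfrak{n}}$ on the finite-dimensional piece --- which is in fact more than the paper spells out. Both are valid; yours is more thorough, the paper's is more economical.
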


      \begin{proof}
	By the definition of the Fourier transform we have
	$$ W_{\sigma,\lambda}^\eta(\varphi) = (2\pi)^{\frac{n}{2}}\cdot\eta(\widehat{\varphi}(e_1)) \qquad \mbox{for all }\varphi\in\mathcal{S}(\mathbb{R}^n)\otimes V_\sigma. $$
	The right hand side makes sense for all $\widehat{\varphi}=f\in\widehat{I}_{\sigma,\lambda}$ since every such $f$ is smooth on $\mathbb{R}^n\setminus\{0\}$, so we can take this as a definition for $W_{\sigma,\lambda}^\eta$. In the F-picture $\widehat{\pi}_{\sigma,\lambda}$, the equivariance condition for a functional $\widehat{W}\in\widehat{I}_{\sigma,\lambda}'$ to be a Whittaker vector with respect to $\psi$ reads (see \eqref{eq:ActionFpictureNbar})
	$$ \widehat{W}(e^{i\langle x,\cdot\rangle}f) = e^{i\langle x,e_1\rangle}\widehat{W}(f) \qquad \mbox{for all }f\in\widehat{I}_{\sigma,\lambda}. $$
	Since $C_c^\infty(\mathbb{R}^n\setminus\{0\})\otimes V_\sigma$ is contained in $\widehat{I}_{\sigma,\lambda}$, we can restrict $\widehat{W}$ to $C_c^\infty(\mathbb{R}^n\setminus\{0\})\otimes V_\sigma$ and hence view it as a distribution $\widehat{W}\in\mathcal{D}'(\mathbb{R}^n\setminus\{0\})\otimes V_\sigma'$. The equivariance condition implies that this distribution is supported on $\{e_1\}$ and of order $0$, hence $\widehat{W}(f)=\eta(f(e_1))$ for some $\eta\in V_\sigma'$. Conversely, every such distribution satisfies the above equivariance condition and hence gives rise to a Whittaker vector.
\end{proof}

The previous statement classifies Whittaker vectors for all irreducible principal series representations $\pi_{\sigma,\lambda}$. Moreover, since
$$ m\cdot W_{\sigma,\lambda}^\eta = W_{\sigma,\lambda}^{\sigma'(m)\eta} \qquad (m\in M_{e_1},\eta\in V_\sigma'), $$
where $(\sigma',V_\sigma')$ denotes the contragredient representation of $(\sigma,V_\sigma)$, the $M_{e_1}$-module structure of $\mathrm{Wh}_\psi(\pi_{\sigma,\lambda})$ is given in terms of the decomposition of $V_\sigma'$ into $W_\tau'$ ($\tau\preceq\sigma$) dual to \eqref{eq:DecompositionSigmaIntoTau}:
$$ \mathrm{Wh}_\psi(\pi_{\sigma,\lambda}) = \bigoplus_{\tau\preceq\sigma}\{W_{\sigma,\lambda}^\eta:\eta\in W_\tau'\} \simeq \bigoplus_{\tau\preceq\sigma}W_\tau'. $$

To determine the space of Whittaker vectors on quotients of $\pi_{\sigma,\lambda}$, it is clearly sufficient to determine which Whittaker vectors on $\pi_{\sigma,\lambda}$ vanish on the subrepresentation that is quotiented out. It turns out that Whittaker vectors on subrepresentations are all given by restrictions of $W_{\sigma,\lambda}^\eta$. Together, we obtain the following description of the space of Whittaker vectors for all irreducible Casselman--Wallach representations of $G$:

\begin{corollary}\label{cor:WhittakerVectors}
	Let $\sigma\in\widehat{M}$ and $a=a_\sigma=\min\{i:\sigma_{i+1}=0\}$.
	\begin{itemize}
		\item If $\pi_{\sigma,\lambda}$ is irreducible, then
		$$ \mathrm{Wh}_\psi(\pi_{\sigma,\lambda}) = \bigoplus_{\tau\preceq\sigma}\{W_{\sigma,\lambda}^\eta:\eta\in W_\tau'\} \simeq \bigoplus_{\tau\preceq\sigma}W_\tau'. $$
		\item For $0\leq i\leq a$ and $0\leq j<\sigma_i-|\sigma_{i+1}|$ we have
		\begin{align*}
			\mathrm{Wh}_\psi(\mathfrak{I}(\sigma,i,j)) &= \bigoplus_{\substack{\tau\preceq\sigma\\\tau_i>|\sigma_{i+1}|+j}} \{W_{\sigma,\lambda}^\eta|_{\mathfrak{I}(\sigma,i,j)}:\eta\in W_\tau'\} \simeq \bigoplus_{\substack{\tau\preceq\sigma\\\tau_i>|\sigma_{i+1}|+j}}W_\tau' && \mbox{if }i\neq\tfrac{n-1}{2},\\
			\mathrm{Wh}_\psi(\mathfrak{I}(\sigma,i,j)^\pm) &= \bigoplus_{\substack{\tau\preceq\sigma\\\pm\tau_i>|\sigma_{i+1}|+j}} \{W_{\sigma,\lambda}^\eta|_{\mathfrak{I}(\sigma,i,j)}:\eta\in W_\tau'\} \simeq \bigoplus_{\substack{\tau\preceq\sigma\\\pm\tau_i>|\sigma_{i+1}|+j}}W_\tau' && \mbox{if }i=\tfrac{n-1}{2},
		\end{align*}
		and
		$$ \mathrm{Wh}_\psi(\mathfrak{Q}(\sigma,i,j)) = \bigoplus_{\substack{\tau\preceq\sigma\\\tau_i\leq|\sigma_{i+1}|+j}} \{W_{\sigma,\lambda}^\eta:\eta\in W_\tau'\} \simeq \bigoplus_{\substack{\tau\preceq\sigma\\\tau_i\leq|\sigma_{i+1}|+j}}W_\tau', $$
		where we put $\tau_0=+\infty$.
	\end{itemize}
\end{corollary}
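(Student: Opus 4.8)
The plan is to build on the classification of irreducible subrepresentations and quotients in the F-picture (Proposition~\ref{prop:SubrepsAndQuotientsInFpicture}) together with the classification of Whittaker vectors on the full principal series $\pi_{\sigma,\lambda}$ (the preceding Lemma). The three bullet points must be handled in turn. The first one is immediate: if $\pi_{\sigma,\lambda}$ is irreducible, then $\mathrm{Wh}_\psi(\pi_{\sigma,\lambda})$ is exactly what the Lemma gives, and the $M_{e_1}$-module structure was already identified via the decomposition of $V_\sigma'$ into $W_\tau'$.

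For the quotients $\mathfrak{Q}(\sigma,i,j)$, the key observation is that a Whittaker vector on a quotient $\pi_{\sigma,\lambda}/\mathfrak{I}$ is the same as a Whittaker vector $W$ on $\pi_{\sigma,\lambda}$ that annihilates $\mathfrak{I}$. So I would first translate everything into the F-picture: under $\mathcal{F}$ the Whittaker vector $W_{\sigma,\lambda}^\eta$ with $\eta\in W_\tau'$ becomes (up to the constant $(2\pi)^{n/2}$) the functional $f\mapsto\eta(f(e_1))$, which only sees the component of $f(e_1)$ in $W_\tau=W_\tau(e_1)$. By Proposition~\ref{prop:SubrepsAndQuotientsInFpicture}, the subrepresentation $\widehat{\mathfrak{I}}(\sigma,i,j)$ (resp. $\widehat{\mathfrak{I}}(\sigma,i,j)^+\oplus\widehat{\mathfrak{I}}(\sigma,i,j)^-$) is the direct sum of those $\widehat{I}_{\sigma,\lambda}(\tau)$ with $\tau_i>|\sigma_{i+1}|+j$ (resp. $|\tau_i|>j$), and the quotient is the direct sum of the remaining $\widehat{I}_{\sigma,\lambda}(\tau)$. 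Since each $\widehat{I}_{\sigma,\lambda}(\tau')$ contains elements $f$ with $f(e_1)$ an arbitrary prescribed vector in $W_{\tau'}$ (indeed it contains all of $C_c^\infty(\mathbb{R}^n\setminus\{0\})\otimes V_\sigma$-functions valued in $W_{\tau'}(\xi)$), the functional $f\mapsto\eta(f(e_1))$ with $\eta\in W_\tau'$ annihilates $\widehat{\mathfrak{I}}(\sigma,i,j)$ if and only if $W_\tau$ does not occur among the summands of the subrepresentation, i.e. if and only if $\tau_i\le|\sigma_{i+1}|+j$. This gives $\mathrm{Wh}_\psi(\mathfrak{Q}(\sigma,i,j))=\bigoplus_{\tau\preceq\sigma,\ \tau_i\le|\sigma_{i+1}|+j}\{W_{\sigma,\lambda}^\eta:\eta\in W_\tau'\}$, and the $M_{e_1}$-equivariance $m\cdot W_{\sigma,\lambda}^\eta=W_{\sigma,\lambda}^{\sigma'(m)\eta}$ (already noted before the corollary) identifies this with $\bigoplus W_\tau'$ as an $M_{e_1}$-module.

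For the subrepresentations, the claim is that every Whittaker vector on $\mathfrak{I}(\sigma,i,j)$ (resp. $\mathfrak{I}(\sigma,i,j)^\pm$) is the restriction of some $W_{\sigma,\lambda}^\eta$, and that two such restrictions coincide if and only if the difference $\eta$ lies in the span of the $W_\tau'$ with $\tau$ \emph{not} appearing in that subrepresentation. The surjectivity direction is the main obstacle: a priori a subrepresentation could carry more Whittaker vectors than those inherited from the ambient principal series. I would argue this in the F-picture. A Whittaker vector $\widehat{W}$ on $\widehat{\mathfrak{I}}(\sigma,i,j)$ restricts to a distribution on $C_c^\infty(\mathbb{R}^n\setminus\{0\})\otimes V_\sigma\cap\widehat{\mathfrak{I}}(\sigma,i,j)=\bigoplus_{\tau\ \mathrm{in}\ \widehat{\mathfrak{I}}}C_c^\infty(\mathbb{R}^n\setminus\{0\},W_\tau(\cdot))$, and the equivariance under $\widehat{\pi}_{\sigma,\lambda}(\overline{n}_x)=e^{-i\langle x,\cdot\rangle}$ forces this distribution to be supported at $\{e_1\}$ and of order zero, exactly as in the proof of the Lemma, hence of the form $f\mapsto\eta(f(e_1))$ for some $\eta\in\bigoplus_{\tau\ \mathrm{in}\ \widehat{\mathfrak{I}}}W_\tau'$; extending $\eta$ by zero to all of $V_\sigma'$ realizes $\widehat{W}$ as the restriction of $W_{\sigma,\lambda}^\eta$ — provided one checks the restriction of $W_{\sigma,\lambda}^\eta$ to the full subrepresentation (not just to the compactly supported dense subspace) agrees with $\widehat{W}$, which follows from continuity of both functionals on the Casselman--Wallach smooth Fréchet representation and density of $C_c^\infty$. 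The injectivity/identification of the kernel is then again immediate from the direct sum decomposition. Finally, in the case $i=\frac{n-1}{2}$ one has to remember that $\widehat{\mathfrak{I}}(\sigma,i,j)^\pm$ corresponds to $\bigoplus_{\pm\tau_i>j}\widehat{I}_{\sigma,\lambda}(\tau)$ (note $|\sigma_{i+1}|=0$ here since $\sigma_m=0$ is forced, but actually $i+1=m$ so $\sigma_{i+1}=\sigma_m=0$ only when relevant — in any case the index condition reduces to $\pm\tau_i>j$), and the same argument applies verbatim. Collecting the three cases and invoking the Casselman Embedding Theorem to see these exhaust all irreducible admissible representations completes the proof.
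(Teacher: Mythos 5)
Your proposal is correct and follows essentially the same route as the paper: pass to the F-picture, use the $\overline{N}$-invariant decomposition \eqref{eq:DecompositionIhat} together with the support-at-$e_1$ argument from the preceding lemma, and combine with Proposition~\ref{prop:SubrepsAndQuotientsInFpicture} to read off which $W_\tau'$ survive on each subrepresentation and quotient. You spell out the density/continuity and annihilation details more explicitly than the paper's terse proof, but the underlying argument is the same.
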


\begin{proof}
  Again, we solve the problem in the F-picture $\widehat{I}_{\sigma,\lambda}$ instead of the non-compact picture $I_{\sigma,\lambda}$. Since the decomposition \eqref{eq:DecompositionIhat} is $\overline{N}$-invariant, it suffices to study $\overline{N}$-equivariant functionals on each $\widehat{I}_{\sigma,\lambda}(\tau)$. But for $f\in\widehat{I}_{\sigma,\lambda}(\tau)$ we have $f(e_1)\in W_\tau$, so $\eta(f(e_1))$ is zero unless $\eta\in W_\tau'$. Together with the description of the composition factors in terms of the spaces $\widehat{I}_{\sigma,\lambda}(\tau)$ in Proposition~\ref{prop:SubrepsAndQuotientsInFpicture} this shows the claim.
\end{proof}


\begin{thebibliography}{HKM14}

\bibitem[B{\'{O}}{\O}96]{BransonOlafsson}
Thomas Branson, Gestur {\'{O}}lafsson, and Bent {\O}rsted, \emph{Spectrum
  generating operators and intertwining operators for representations induced
  from a maximal parabolic subgroup}, J. Funct. Anal. \textbf{135} (1996),
  163--205.

\bibitem[BSB83]{BaldoniSilvaBarbasch83}
Maria~Welleda Baldoni~Silva and Dan Barbasch, \emph{The unitary spectrum for
  real rank one groups}, Invent. Math. \textbf{72} (1983), no.~1, 27--55.

\bibitem[DG93]{DingGross93}
Hongming Ding and Kenneth~I. Gross, \emph{Operator-valued {B}essel functions on
  {J}ordan algebras}, J. Reine Angew. Math. \textbf{435} (1993), 157--196.

\bibitem[Din99]{Ding99}
Hongming Ding, \emph{Weighted {L}aplace transforms and {B}essel functions on
  {H}ermitian symmetric spaces}, Trans. Amer. Math. Soc. \textbf{351} (1999),
  no.~10, 4205--4243.

\bibitem[DS99]{DvorskySahi99}
Alexander Dvorsky and Siddhartha Sahi, \emph{Explicit {H}ilbert spaces for
  certain unipotent representations. {II}}, Invent. Math. \textbf{138} (1999),
  no.~1, 203--224.

\bibitem[DS03]{DvorskySahi03}
\bysame, \emph{Explicit {H}ilbert spaces for certain unipotent representations.
  {III}}, J. Funct. Anal. \textbf{201} (2003), no.~2, 430--456.

\bibitem[F{\O}21]{FischmannOrsted2021}
Matthias Fischmann and Bent {\O}rsted, \emph{A family of {R}iesz distributions
  for differential forms on {E}uclidian space}, Int. Math. Res. Not. IMRN
  (2021), no.~13, 9746--9768.

\bibitem[Hir62]{Hirai62}
Takeshi Hirai, \emph{On irreducible representations of the {L}orentz group of
  {$n-{\rm th}$} order}, Proc. Japan Acad. \textbf{38} (1962), 258--262.

\bibitem[HKM14]{HilgertKobayashiMoellers14}
Joachim Hilgert, Toshiyuki Kobayashi, and Jan M\"{o}llers, \emph{Minimal
  representations via {B}essel operators}, J. Math. Soc. Japan \textbf{66}
  (2014), no.~2, 349--414.

\bibitem[K{\O}03]{KobayashiOersted03}
Toshiyuki Kobayashi and Bent {\O}rsted, \emph{Analysis on the minimal
  representation of {$\mathrm{O}(p,q)$}. {III}. {U}ltrahyperbolic equations on
  {${\mathbb{R}}^{p-1,q-1}$}}, Adv. Math. \textbf{180} (2003), no.~2, 551--595.

\bibitem[KS18]{KobayashiSpeh18}
Toshiyuki Kobayashi and Birgit Speh, \emph{Symmetry breaking for
  representations of rank one orthogonal groups {II}}, Lecture Notes in
  Mathematics, vol. 2234, Springer, Singapore, 2018.

\bibitem[LOY23]{LiuOshimaYu23}
Gang Liu, Yoshiki Oshima, and Jun Yu, \emph{Restriction of irreducible unitary
  representations of {${\rm Spin}(N,1)$} to parabolic subgroups}, Represent.
  Theory \textbf{27} (2023), 887--932.

\bibitem[MO15]{MoellersOshima15}
Jan M\"{o}llers and Yoshiki Oshima, \emph{Restriction of most degenerate
  representations of {$O(1,N)$} with respect to symmetric pairs}, J. Math. Sci.
  Univ. Tokyo \textbf{22} (2015), no.~1, 279--338.

\bibitem[MS17]{MoellersSchwarz17}
Jan M\"{o}llers and Benjamin Schwarz, \emph{Bessel operators on {J}ordan pairs
  and small representations of semisimple {L}ie groups}, J. Funct. Anal.
  \textbf{272} (2017), no.~5, 1892--1955.

\bibitem[Sah92]{Sahi92}
Siddhartha Sahi, \emph{Explicit {H}ilbert spaces for certain unipotent
  representations}, Invent. Math. \textbf{110} (1992), no.~2, 409--418.

\bibitem[Thi73]{Thieleker73}
Ernest Thieleker, \emph{On the quasi-simple irreducible representations of the
  {L}orentz groups}, Trans. Amer. Math. Soc. \textbf{179} (1973), 465--505.

\bibitem[VR76]{VergneRossi76}
Mich\`ele Vergne and Hugo Rossi, \emph{Analytic continuation of the holomorphic
  discrete series of a semi-simple {L}ie group}, Acta Math. \textbf{136}
  (1976), no.~1-2, 1--59.

\bibitem[Wal92]{W92}
Nolan~R. Wallach, \emph{Real reductive groups. {II}}, Pure and Applied
  Mathematics, vol. 132, Academic Press, Inc., Boston, MA, 1992.

\end{thebibliography}
\providecommand{\bysame}{\leavevmode\hbox to3em{\hrulefill}\thinspace}
\providecommand{\MR}{\relax\ifhmode\unskip\space\fi MR }
\providecommand{\MRhref}[2]{%
  \href{http://www.ams.org/mathscinet-getitem?mr=#1}{#2}
}
\providecommand{\href}[2]{#2}


\end{document}